\documentclass{amsart}

\usepackage[latin1]{inputenc}
\usepackage{amsfonts}
\usepackage{amsmath}
\usepackage{amsthm}
\usepackage{amssymb}
\usepackage{latexsym}
\usepackage{enumerate}
\usepackage{tikz}

\newtheorem{theorem}{Theorem}[section]
\newtheorem{lemma}[theorem]{Lemma}
\newtheorem{proposition}[theorem]{Proposition}

\newtheorem{question}[theorem]{Question}

\newtheorem{definition}[theorem]{Definition}

\numberwithin{equation}{section}

\newtheoremstyle{TheoremNum}
  {\topsep}{\topsep}              
  {\itshape}                      
  {}                              
  {\bfseries}                     
  {.}                             
  { }                             
  {\thmname{#1}\thmnote{ \bfseries #3}}
\theoremstyle{TheoremNum}
\newtheorem{reptheorem}{Theorem}

\begin{document}
\newcommand{\pp}{\mathfrak{p}}
\newcommand{\cc}{\mathfrak{c}}
\newcommand{\N}{\mathbb{N}}
\newcommand{\Q}{\mathbb{Q}}
\newcommand{\C}{\mathbb{C}}
\newcommand{\Z}{\mathbb{Z}}
\newcommand{\R}{\mathbb{R}}
\newcommand{\T}{\mathbb{T}}
\newcommand{\I}{\mathcal{I}}
\newcommand{\J}{\mathcal{J}}
\newcommand{\A}{\mathcal{A}}
\newcommand{\HH}{\mathcal{H}}
\newcommand{\K}{\mathcal{K}}
\newcommand{\B}{\mathcal{B}}
\newcommand{\st}{*}
\newcommand{\PP}{\mathbb{P}}
\newcommand{\SSS}{\mathbb{S}}
\newcommand{\forces}{\Vdash}
\newcommand{\dom}{\text{dom}}
\newcommand{\osc}{\text{osc}}
\newcommand\encircle[1]{%
  \tikz[baseline=(X.base)] 
    \node (X) [draw, shape=circle, inner sep=0] {\strut #1};}

\title[Noncommutative Cantor-Bendixson derivatives]{Noncommutative Cantor-Bendixson derivatives
and scattered $C^*$-algebras}

\author{Saeed Ghasemi}
\address{Institute of Mathematics, Polish Academy of Sciences,
ul. \'Sniadeckich 8,  00-656 Warszawa, Poland}
\email{\texttt{sghasemi@impan.pl}}

\author{Piotr Koszmider}
\address{Institute of Mathematics, Polish Academy of Sciences,
ul. \'Sniadeckich 8,  00-656 Warszawa, Poland}
\email{\texttt{piotr.koszmider@impan.pl}}
\thanks{This research  of the second named author was partially supported by   grant
PVE Ci\^encia sem Fronteiras - CNPq (406239/2013-4).}

\begin{abstract} We   analyze the sequence obtained by consecutive applications of 
the Cantor-Bendixson derivative for a noncommutative scattered $C^*$-algebra
$\mathcal A$, using the ideal $\I^{At}(\mathcal A)$ 
generated  by the minimal projections of $\mathcal A$. 
With its help, we present some  fundamental results concerning
scattered $C^*$-algebras, in a manner parallel to the commutative case
of scattered compact or locally compact Hausdorff spaces and
superatomic Boolean algebras. It also allows us to formulate
problems which have motivated  the ``cardinal sequences''  programme 
in the classical topology,  in 
the noncommutative context. This leads to some new constructions
of noncommutative scattered $C^*$-algebras and new open problems. 
In particular, we construct a type $I$ $C^*$-algebra which is the inductive limit
 of  stable ideals $\A_\alpha$, along
an  uncountable limit ordinal $\lambda$, such that $\A_{\alpha+1}/\A_\alpha$
is $*$-isomorphic to the algebra of all compact operators on 
a separable Hilbert space and $\A_{\alpha+1}$ is 
$\sigma$-unital and stable for each $\alpha<\lambda$, but $\A$ is not stable and where all ideals
of $\A$ are of the form $\A_\alpha$. In particular, $\A$ is a nonseparable $C^*$-algebra 
with no  ideal which is maximal among the stable ideals. 
 This answers a question of M. R{\o}rdam in the nonseparable case.
All the above $C^*$-algebras $A_\alpha$s and $A$ satisfy the following
version of the definition of an AF algebra: any finite subset can be approximated
from a finite-dimensional subalgebra.
Two more complex constructions based on the language developed in this paper
are presented in separate papers \cite{psi-space, thin-tall}.
\end{abstract}

\maketitle

\section{Introduction}

\subsection{Background and goals}
The first modern paper on superatomic Boolean algebras (the algebras where every subalgebra has an atom)
was written by Mostowski and Tarski in 1939 (\cite{mostowski-tarski}) while a serious
research on dispersed (or scattered) compact Hausdorff spaces (the spaces where every subspace has a relative isolated point)
can be traced back to Cantor with a substantial abstract result already in 1920 in the paper of
Mazurkiewicz and Sierpi\'nski (\cite{mazurkiewicz-sierpinski}). Since the papers of Day (\cite{day}),
Rudin (\cite{rudin}), Pe\l czy\'nski and Semadeni (\cite{pelczynski-semadeni}), it was realized that
the topics of superatomic Boolean algebras and of scattered compact spaces are the same
topics in two different languages (formally, by the Stone duality,
a compact $K$ is scattered if and only if the Boolean algebra $Clop(K)$, of all clopen subsets of $K$ 
forms a basis for $K$ and  is  superatomic) and that they produce
an interesting class of Banach spaces of the form $C(K)$ with many peculiar features\footnote{Most Banach space theory 
refers to Banach spaces over the reals and in this context $C(K)$ stands for the Banach space of all real valued functions
on a compact Hausdorff space $K$,  most of the results can be transfered verbatim to the case of
complex Banach spaces, for example of continuous complex valued functions. However when we equip $C(K)$
with the multiplication and the involution and talk
about a it as a $C^*$-algebra, by  $C(K)$ we always mean the $C^*$-algebra of  complex valued continuous functions.}.
In fact, already in 1930 J. Schreier used the compact scattered space $K=[0,\omega^\omega]$
to provide an ingenious negative answer to a problem of Banach, if nonisomorphic Banach spaces may have
isomorphic dual spaces (\cite{schreier}).  Later two important classes of Banach spaces 
emerged by generalizing these $C(K)$s, namely, Asplund spaces (those Banach spaces where
separable subspaces have separable duals) and Asplund generated spaces (those which
admit a dense range linear bounded operator from an Asplund space). These classes assumed
fundamental roles in the theory of vector valued measures and renorming theory of Banach spaces
(\cite{vector-measures, dgz}), due to the Radon-Nikodym property in the dual and
the Frechet differentiability properties.

At the same time the research related to the dual pair of superatomic Boolean algebras and scattered
compact spaces underwent  dramatic set-theoretic  developments, especially related to the use
of additional set-theoretic assumptions.  For example, Ostaszewski constructed his
hereditary separable non hereditarily Lindel\"of space assuming Jensen's $\diamondsuit$
principle (\cite{ostaszewski}), Tall obtaind a separable nonmetrizable normal Moore 
space assuming $\pp>\omega_1$ (\cite{tall}).
The interaction with Banach space theory very much included these developments.
For example, under the continuum hypothesis Kunen's scattered compact space $K$,
in the form of $C(K)$, provided an example of a nonseparable Banach space with no
uncountable biorthogonal system (\cite{negrepontis}). Pol used $C(K)$ for $K$ equal to the ladder system space to
answer a question of Corson, whether all weakly Lindel\"of Banach spaces
are weakly compactly generated (\cite{pol}). Many other equally fundamental constructions
followed (\cite{simon}, \cite{biorthogonal-hajek}, \cite{christina}).

It was Tomiyama in 1963 who first addressed the question which now would be
expressed as ``what are the
$C^*$-algebras which are Asplund Banach spaces?" (\cite{tomiyama}). 
This was done just in the separable case, and later developed in the general
case by Jensen (\cite{jensen1, jensen2, jensen3}) and Huraya (\cite{huruya}). Further understanding
was obtained by Wojtaszczyk in \cite{wojtaszczyk}, Chu in \cite{chu-rn}, where the Radon-Nikodym
property of the duals of scattered $C^*$-algebras was proved. In \cite{chu-banach-saks},
where the weak Banach-Saks property was investigated in the context of scattered $C^*$-algebras
and in \cite{chu-crossed} where crossed products of scattered $C^*$-algebras were investigated.
Morover Lin proved that scattered $C^*$-algebras are approximately finite in \cite{lin}.
Recent papers \cite{kusuda-af}, \cite{kusuda-crossed} of Kusuda look at scattered
$C^*$-algebras in the context of AF-algebras and crossed products.

As often happens,   the noncommutative theory was developed based on the 
noncommutative analogues of
the phenomena taking place  in the dual or the bidual rather than the algebra itself. This is
because of the lack of a well-behaved version of the $K$ for $C(K)$, in the general case. After all, $K$
naturally can be considered as the subspace of the the dual Banach space of $C(K)$, the space 
of Radon measures on $K$.
The purpose of this paper is
to present an alternative approach which focuses on  the phenomena taking place in the 
noncommutative analogues of
 Boolean algebra of clopen sets of $K$, in
other words the projections of $C(K)$. Through the Stone duality, they 
are quite topological, referring to the level of $K$. 
We aim at  three goals: a) to present the usefulness
of  a natural notion of Cantor-Bendixson derivative
of a $C^*$-algebra by obtaining elementary and  parallel (to
the commutative case) new proofs of many results concerning scattered
$C^*$-algebras;  
b) to motivate a collection of problems and constructions of
$C^*$-algebras parallel to the commutative well established topic of
cardinal sequences of superatomic Boolean algebras or scattered (locally) compact Hausdorff spaces
(more constructions can be found in our two
subsequent papers \cite{psi-space} and \cite{thin-tall});
c) to illustrate the accessibility of the noncommutative realm to 
 the reasonings of set-theoretic topological nature.

\subsection{Cantor-Bendixson derivatives}
In Section 2 we review fundamental facts and some proofs
concerning the commutative case, i.e., the superatomic Boolean algebras and
scattered compact spaces, to which we will be appealing
for intuitions in the following sections. 

Section 3 is devoted to developing elementary apparatus allowing us
to work with scattered $C^*$-algebras in a manner parallel to the
commutative case of superatomic
Boolean algebras. The first surprising difference between the literatures concerning the commutative and the noncommutative
case is the way one decomposes the commutative and noncommutative scattered objects. 
In the Boolean algebras and compact spaces 
the main operations leading to the decomposition is the Cantor-Bendixson derivative. In general, these  kind of decompositions are usually called the ``composition series" for a $C^*$-algebra (see IV.1.1.10 of \cite{Blackadar}).
On one hand, since scattered $C^*$-algebras are type $I$ (or GCR) (see 
Proposition \ref{real-rank-zero} (3) or \cite{jensen1}),  they are subject to a
unique composition series  of their ideals such that each consecutive quotient is the largest type $I_0$ (or CCR) 
subalgebra (see e.g, IV.1.1.12 of \cite{Blackadar} and Theorem 1.5.5 of \cite{invitation}).
 On the other hand, a generally ``finer'' and non-unique composition series for   scattered $C^*$-algebras
 is presented in Theorem 2 of \cite{jensen2}, where each consecutive quotient is elementary
  (see also Theorem \ref{theorem1} (\ref{ec-sequence-old})). 
The Cantor-Bendixson decomposition sequence (Theorem \ref{theorem1} (\ref{ec-sequence-new}))
is a third way. It is unique but may have terms which do not appear in the GCR composition series
as well as there could be terms of the GCR composition series which do
not appear at all in the Cantor-Bendixson sequence (Proposition \ref{not-gcr}). Recall that 
in the case of the Banach space theory manifestation of scattered objects,  
a version of the Cantor-Bendixson derivative (in the form of 
 the Szlenk index) turned out
to be a fundamental combinatorial tool as well (see e.g. \cite{lancien}). So, we believe that
one should investigate  the $C^*$-algebra version of the Cantor-Bendixson decomposition.
In our approach the role of Boolean atoms 
(corresponding to isolated points in topology) in $C^*$-algebras is played by 
the following well-known notion of a minimal
projection:

\begin{definition}\label{definition-minimal}
A projection $p$ in a $C^*$-algebra $\mathcal A$  is called minimal if $p\mathcal A p=\C p$.
The set of minimal projections of $\A$ will be denoted by $At(\A)$.
The $*$-subalgebra of $\mathcal A$ generated by the minimal projections of $\mathcal A$
will be denoted $\I^{At}(\mathcal A)$.
\end{definition}

The origin of the notation $\I^{At}(\A)$ is the Boolean algebra notation for the notion 
of an atom (Definition \ref{atoms}).
For example, we have $\chi_{\{x\}}C(K)\chi_{\{x\}}=\C\chi_{\{x\}}$,
for any  isolated point of a compact space $K$, and in fact these are the only minimal
projections in $C(K)$. The right candidate for the
Cantor-Bendixson derivative turns out to be  the mapping from a $C^*$-algebra $\mathcal A$
to its quotient $\mathcal A/\I^{At}(\mathcal A)$ by the ideal $\I^{At}(\mathcal A)$. 
The main observations from Section 3 concerning $\I^{At}(\mathcal A)$ can be summarized as follows:
\vfill
\break
\begin{theorem} Suppose that $\mathcal A$ is a $C^*$-algebra.
\begin{enumerate}
\item $\I^{At}(\mathcal A)$ is an ideal of $\mathcal A$,
\item $\I^{At}(\mathcal A)$ is isomorphic to a subalgebra of the algebra $\mathcal K(\mathcal H)$ of all 
 compact operators on a Hilbert space $\mathcal H$,
\item $\I^{At}(\mathcal A)$ contains all ideals of $\mathcal A$ which
are  isomorphic to a subalgebra of
$\mathcal K(\mathcal H)$ for some Hilbert space $\mathcal H$, 
\item if an ideal $\mathcal I\subseteq \A$ is essential and
isomorphic to a subalgebra of $\mathcal K(\mathcal H)$ for some Hilbert space $\mathcal H$,
then $\mathcal I=\I^{At}(\mathcal A)$.
\end{enumerate}
\end{theorem}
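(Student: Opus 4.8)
The plan is to reduce everything to one structural lemma: for minimal projections $p,q$ the corner $p\A q$ is at most one-dimensional, and when nonzero it equals $\C v$ for a partial isometry $v$ with $vv^*=p$ and $v^*v=q$. Indeed, for $0\neq x\in p\A q$ one has $x^*x\in q\A q=\C q$, so after normalization $v=x/\|x\|$ satisfies $v^*v=q$ while $vv^*$ is a nonzero projection in $p\A p=\C p$, forcing $vv^*=p$; and any $y\in p\A q$ equals $(vv^*)y=v(v^*y)\in\C v$. The same computation shows that whenever $v^*v$ is minimal its range projection $vv^*$ is minimal as well (because $v^*\A v\subseteq\C(v^*v)$). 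I will use these two facts throughout.

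For (1) it suffices to prove $ap\in\I^{At}(\A)$ for every $a\in\A$ and every $p\in At(\A)$, since then $pa=(a^*p)^*\in\I^{At}(\A)$ and the ideal property for a general product of minimal projections follows by absorbing the leading factor and passing to closures and adjoints. Now $(ap)^*(ap)\in p\A p=\C p$, so either $ap=0$ or $ap=\sqrt\mu\,v$ with $v^*v=p$ and $r:=vv^*$ minimal; thus (1) reduces to the key claim that \emph{every partial isometry between minimal projections lies in} $\I^{At}(\A)$. This is the main obstacle. When $v^*v=p$ and $vv^*=r'$ are orthogonal, the elements $p,r',v,v^*$ form a system of matrix units spanning the corner $(p+r')\A(p+r')\cong M_2$, and the rotated rank-one projections $t=\tfrac12(p+r'+v+v^*)$ and $t'=\tfrac12(p+r'-iv+iv^*)$ are again minimal in $\A$ (their corner is this $M_2$); since $v+v^*=2t-p-r'$ and $v-v^*=i(2t'-p-r')$, we get $v\in\I^{At}(\A)$. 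The non-orthogonal case reduces to this one: writing $p=v^*v$, $r=vv^*$ and $pv=\delta p$, the element $u=(1-p)v=v-\delta p$ satisfies $u^*u=(1-|\delta|^2)p$ and $pu=0$, so $u$ is a scalar multiple of a partial isometry from $p$ onto a minimal projection orthogonal to $p$; the orthogonal case gives $u\in\I^{At}(\A)$, whence $v=u+\delta p\in\I^{At}(\A)$.

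For (2) I would pass to representations. Each $p\in At(\A)$ defines a pure state $\phi_p$ via $pap=\phi_p(a)p$, whose GNS representation $\pi_p$ is irreducible with $\pi_p(p)$ of rank one. In \emph{any} irreducible representation $\pi$, Kadison transitivity forces $\pi(q)$ to have rank at most one for every minimal $q$ (as $\pi(q)B(\HH)\pi(q)$ is the weak closure of $\C\pi(q)$), so $\pi$ sends each generator, hence each element, of $\I^{At}(\A)$ to a finite-rank operator supported in a single summand. Choosing one $\pi_p$ per equivalence class of minimal projections (with $p\sim q$ iff $p\A q\neq0$) and setting $\pi=\bigoplus_p\pi_p$ on $\HH=\bigoplus_p\HH_p$, the map $\pi$ is faithful on $\I^{At}(\A)$: if $\pi(x)=0$ but $x\neq0$, some irreducible $\sigma$ has $\sigma(x)\neq0$, forcing $\sigma(q)\neq0$ for some minimal $q$ occurring in $x$, but then $\sigma\cong\pi_q\cong\pi_p$ is one of the chosen summands, a contradiction. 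As the generators land in $\mathcal F(\HH)$, this embeds $\I^{At}(\A)$ isometrically into $\K(\HH)$.

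Finally (3) and (4) are short. If an ideal $\I$ is isomorphic to a subalgebra of $\K(\HH)$ then, being a $C^*$-subalgebra of the compacts, it is generated by its minimal projections (the finite-rank spectral projections of its positive elements are finite sums of minimal projections of $\I$); moreover every minimal projection $e$ of $\I$ is minimal in $\A$, since $e\A e\subseteq\I$ yields $e\A e=e(e\A e)e\subseteq e\I e=\C e$. Hence $At(\I)\subseteq At(\A)$, so $\I\subseteq\I^{At}(\A)$, proving (3). For (4), assume in addition that $\I$ is essential and take any $e\in At(\A)$; then $e\I e\in\{0,\C e\}$, and $e\I e=0$ would give $e\I=\I e=0$, so the nonzero ideal generated by $e$ would annihilate $\I$, contradicting essentiality. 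Thus $e\in e\I e\subseteq\I$ for every minimal $e$, giving $\I^{At}(\A)\subseteq\I$, which together with (3) yields $\I=\I^{At}(\A)$.
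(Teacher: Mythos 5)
Your proof is correct, but it reaches the conclusion by a route that differs from the paper's in two places. The paper's engine is the reduced atomic representation $\pi$ of the whole algebra together with the identity $\pi[\A]\cap \K(\HH)=\pi[\I^{At}(\A)]$ (Proposition \ref{atoms-in-reduced-atomic}); since $\K(\HH)$ is an ideal of $\B(\HH)$, the ideal property (1) then falls out with no computation, and (2) is immediate from faithfulness (Lemma \ref{reduced-faithful}). You instead prove (1) by bare hands: the observation that $p\A q$ is at most one-dimensional and spanned by a partial isometry, the $M_2$ rotation trick writing a partial isometry between orthogonal minimal projections as a linear combination of the four minimal projections $p,r',t,t'$, and the reduction of the non-orthogonal case by subtracting $\delta p$. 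I checked the matrix-unit relations and that $t,t'$ are minimal in $\A$ and not merely in the corner; the argument is sound, purely algebraic, and exhibits explicit generators --- though since you still need the representation theory for (2), you end up doing both kinds of work where the paper does one. Your (2) is a leaner variant of the paper's: you sum only the GNS representations of the states $\phi_p$ over equivalence classes of minimal projections and prove faithfulness on $\I^{At}(\A)$ alone, in effect reproving Lemmas \ref{preserving-minimal} and \ref{minimal-only-one} along the way. Your (3) coincides with the paper's argument (Proposition \ref{only-atoms-to-compact} plus Lemma \ref{minimal-in-ideal}). Your (4) is more elementary than the paper's: instead of decomposing $\I^{At}(\A)$ as $\bigoplus_i\K(\HH_i)$ and invoking simplicity of the summands and the multiplier-algebra embedding (Propositions \ref{embedding-scattered} and \ref{essential-maximal}), you show directly that $e\I e=\C e$ for every minimal $e$, whence $e\in\I$; the only cosmetic imprecision is the phrase about ``the ideal generated by $e$ annihilating $\I$'' --- the clean statement is that $e\I e=0$ forces $e\in\I^{\perp}$, so $\I^{\perp}\neq\{0\}$, contradicting essentiality.
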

\begin{proof} See Propositions \ref{atoms-are-compact}, \ref{atoms-are-max-ideal}, \ref{essential-compact}.
\end{proof}
This corresponds to the commutative case, when $\A$ is a Boolean
ring, $\mathcal K(\mathcal H)$ for $\mathcal H=\ell_2(\kappa)$
for some cardinal $\kappa$, is replaced by the Boolean ring $Fin(\kappa)$ of finite  subsets
of $\kappa$ and essential ideals are replaced by dense Boolean ideals:
the Boolean ring generated by atoms is an ideal, it is isomorphic to the ring
$Fin(\kappa)$ for some $\kappa$, it contains all ideals of $\A$ isomorphic to $Fin(\lambda)$ for any $\lambda$,
and if an ideal  is dense and isomorphic to $Fin(\kappa)$, it is the ideal generated by all atoms.

It should be noted that one can consider the usual Cantor-Bendixson derivatives (i.e., removing
all isolated points) in the spectrum space $\hat \A$ of a $C^*$-algebra $\A$. 
Moreover as proved by Jensen \cite{jensen2} and exploited by Lin in \cite{lin}, this derivative
makes sense for scattered $C^*$-algebras and, for
example, the Cantor-Bendixson height of the spectrum is the same
as the Cantor-Bendixson height of the algebra in our sense. However, the spectrum of $\mathcal A$
in nontrivial noncommutative cases is not a Hausdorff space, e.g., often (quite
often in the scattered case) given
any two points one is in the closure of the other (cf. Proposition \ref{chain}). 
Moreover the spectrum does not determine the algebra, unlike  in the commutative case
via the Stone duality, for example $\K(\ell_2)\oplus \K(\ell_2)$ and $C(\{1,2\})$ have
the same spectra - two point Hausdorff space.
So in a sense, our approach 
is to transport these derivatives to the algebra level, where it corresponds to
intuitive Boolean notions and more readily becomes a tool for investigating
the $C^*$-algebra. The techniques of section 3 are standard and similar to
the development of the basic theory of CCR and GCR (or type $I$)  algebras (see e.g. \cite{invitation}).

\subsection{Scattered $C^*$-algebras}
We may define scattered $C^*$-algebras in a manner
parallel to the superatomic Boolean algebras:

\begin{definition}A $C^*$-algebra $\A$ is called scattered if 
for every nonzero $C^*$-subalgebra $\B\subseteq \A$, the
ideal $\I^{At}(\B)$ is nonzero.
\end{definition}

It should be added that already in the paper \cite{tomiyama} of Tomiyama the role
of minimal projections in scattered $C^*$-algebras is exploited.
In section 4 we prove
the following six equivalent conditions including  (\ref{ec-sequence-old}) and
 (\ref{ec-spectrum}) known in the $C^*$-algebras literature
to be equivalent to the
traditional definition.

\begin{theorem}[\cite{jensen1, jensen2, wojtaszczyk}]\label{theorem1}
 Suppose that $\mathcal A$ is a $C^*$-algebra.
The following conditions are equivalent:
\begin{enumerate}

\item\label{ec-homomorphic} Every non-zero $*$-homomorphic   image of $\mathcal A$ has a minimal projection.
\item\label{ec-sequence-new} There is an ordinal $ht(\mathcal A)$
and a continuous  increasing sequence of closed ideals $(\I_\alpha)_{\alpha\leq ht(\mathcal A)}$
such that  $\I_0=\{0\}$, $\I_{ht(\mathcal A)}= \mathcal A$ and
$$\I^{At}(\mathcal A /\I_\alpha)=\{[a]_{\I_\alpha}: a\in \I_{\alpha+1}\},$$
for every $\alpha < ht(\mathcal A)$.
\item\label{ec-sequence-old} There is an ordinal $m(\A)$
and a continuous  increasing sequence of ideals $(\J_\alpha)_{\alpha \leq m(A)}$
such that $\J_0=\{0\}$, $\J_{m(\A)} = \mathcal A$ and  $\J_{\alpha+1}/\J_\alpha$ is 
an elementary $C^*$-algebra (i.e., $^*$-isomorphic to the algebra
of all compact operators  on a Hilbert space)
for every $\alpha < m(\A)$.
\item\label{ec-subalgebra} Every non-zero subalgebra of $\mathcal A$ has a minimal projection.
\item\label{ec-no-interval} $\A$ does not contain a copy of the $C^*$-algebra $C_0((0,1])=\{f\in C((0,1]):
\lim_{x\rightarrow 0}f(x)=0\}$.
\item\label{ec-spectrum} The spectrum of every self-adjoint element is countable.
\end{enumerate}
\end{theorem}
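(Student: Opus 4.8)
The plan is to establish that the six conditions are equivalent by proving the cycle of implications $(1)\Rightarrow(2)\Rightarrow(3)\Rightarrow(4)\Rightarrow(5)\Rightarrow(6)\Rightarrow(1)$, using throughout the structural information about $\I^{At}$ recorded in the preceding theorem. The only genuinely hard step is the closing implication $(6)\Rightarrow(1)$; the remaining ones are either formal consequences of the Cantor--Bendixson machinery or routine applications of the continuous functional calculus.

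For $(1)\Rightarrow(2)$ I would simply run the Cantor--Bendixson recursion: set $\I_0=\{0\}$, let $\I_{\alpha+1}$ be the preimage in $\A$ of $\I^{At}(\A/\I_\alpha)$ under the quotient map, and take closures of unions at limit stages, so that the resulting sequence is automatically increasing, continuous, and satisfies the displayed relation by construction. Since a strictly increasing transfinite chain of distinct closed ideals cannot exceed the number of subsets of $\A$, the sequence stabilizes at some $\gamma$; stabilization means $\I^{At}(\A/\I_\gamma)=\{0\}$, i.e. $\A/\I_\gamma$ has no minimal projection, so $(1)$ forces $\I_\gamma=\A$ and we set $ht(\A)=\gamma$. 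For $(2)\Rightarrow(3)$ I would refine this sequence: each quotient $\I_{\alpha+1}/\I_\alpha=\I^{At}(\A/\I_\alpha)$ is, by the preceding theorem, a subalgebra of $\K(\HH)$ generated by minimal projections, and such an algebra decomposes as a $c_0$-direct sum $\bigoplus_i\K(\HH_i)$ of elementary blocks; well-ordering the blocks and inserting the corresponding partial sums between $\I_\alpha$ and $\I_{\alpha+1}$ produces the finer series with elementary consecutive quotients. For $(3)\Rightarrow(4)$, given a nonzero subalgebra $\B$ I would take the least $\alpha$ with $\B\cap\J_\alpha\neq\{0\}$; a short continuity argument shows $\alpha$ is a successor $\beta+1$, so $\B\cap\J_{\beta+1}$ injects into the elementary algebra $\J_{\beta+1}/\J_\beta$, and since a nonzero subalgebra of $\K(\HH)$ always has a minimal projection (pass to a finite-rank spectral corner, which is a finite-dimensional algebra) this projection pulls back to a minimal projection of $\B$.

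The implication $(4)\Rightarrow(5)$ is immediate, because $C_0((0,1])$ has no minimal projection and so cannot be a subalgebra of an algebra satisfying $(4)$. For $(5)\Rightarrow(6)$ I argue contrapositively: if some self-adjoint $a$ has uncountable spectrum, then the compact set $\sigma(a)\subseteq\R$ contains a nonempty perfect subset, which admits a continuous surjection onto $[0,1]$; extending it by Tietze to a function $f$ on $\sigma(a)$ with $f(0)=0$ and applying the functional calculus yields a self-adjoint $b=f(a)$ with $\sigma(b)=[0,1]$, whence $C^*(b)\cong C_0((0,1])$ sits inside $\A$, contradicting $(5)$. Finally, for $(6)\Rightarrow(1)$ I would first note that $(6)$ passes to quotients, since a self-adjoint element of $\A/\I$ lifts to a self-adjoint element whose spectrum contains that of its image; thus it suffices to prove the key lemma that a nonzero $C^*$-algebra in which every self-adjoint element has countable spectrum possesses a minimal projection, and then apply it to each nonzero quotient.

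I expect this lemma to be the main obstacle, and the plan is to prove it by an abelian reduction. Choosing a nonzero positive $a$, its spectrum is a nonempty countable compact subset of $[0,\infty)$, hence scattered, so it has a nonzero isolated point whose spectral projection $e$ produces a unital corner $e\A e$ still subject to $(6)$. Inside $e\A e$ I would pass to a maximal abelian subalgebra $\mathcal M\cong C(X)$; since every real element of $C(X)$ has countable range, $X$ must be scattered (otherwise a Cantor scheme built on a perfect subset yields a continuous function of uncountable range), so $X$ has an isolated point whose characteristic function is an atom $q$ of $\mathcal M$. The delicate points are then two standard but non-formal facts: that the corner $q\mathcal M q=\C q$ remains maximal abelian in $q(e\A e)q$, and that a $C^*$-algebra possessing a one-dimensional maximal abelian subalgebra is itself one-dimensional (any non-scalar self-adjoint element would otherwise generate a strictly larger abelian subalgebra). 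Together these give $q(e\A e)q=\C q$, so $q$ is a minimal projection of $e\A e$ and hence of $\A$, completing the lemma and with it the cycle. A secondary technical burden, should it not already be available from Section 3, is the $c_0$-decomposition of $\I^{At}$ used in $(2)\Rightarrow(3)$.
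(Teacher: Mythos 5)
Your cycle $(1)\Rightarrow(2)\Rightarrow(3)\Rightarrow(4)\Rightarrow(5)\Rightarrow(6)\Rightarrow(1)$ is the same as the paper's, and your arguments for $(1)\Rightarrow(2)$, $(2)\Rightarrow(3)$, $(4)\Rightarrow(5)$ and $(5)\Rightarrow(6)$ coincide with ours; the proposal is correct throughout. You diverge genuinely at two steps. For $(3)\Rightarrow(4)$ the paper passes to a masa of the given subalgebra $\mathcal C$, intersects the composition series with the resulting $C(K)$, and shows $K$ is scattered; you instead locate the least $\alpha$ with $\B\cap\J_\alpha\neq\{0\}$, observe it is a successor, and extract a minimal projection from the resulting embedding of the ideal $\B\cap\J_{\beta+1}$ of $\B$ into an elementary algebra. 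This is slicker and avoids the commutative detour, but the "short continuity argument" at limit stages does carry content: one should note that $\mathrm{dist}(b,\overline{\bigcup_{\beta<\alpha}\J_\beta})=\inf_\beta\mathrm{dist}(b,\J_\beta)$, so that the quotient by $\J_\alpha$ is still isometric on $\B$ when all $\B\cap\J_\beta$ vanish; and the final lift does need Lemma \ref{minimal-in-ideal} (minimal projections of ideals are minimal in the ambient algebra). For $(6)\Rightarrow(1)$ the paper argues by the dichotomy of Lemmas \ref{none-interval} and \ref{many-interval}, producing a copy of $C([0,1])$ (hence an element of uncountable spectrum) unless a minimal projection exists; your route through an isolated point of the spectrum of a positive element, the unital corner $e\A e$, and a masa $\mathcal M\cong C(X)$ with $X$ scattered is equally valid, and the two "delicate points" you isolate are exactly the masa half of Lemma \ref{minimal-in-ideal}, which is already proved in Section 3, so nothing new is owed there. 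Two small points deserve the care the paper takes: the passage of condition $(6)$ to quotients should be phrased via unitizations when $\A$ is nonunital (spectra of self-adjoint elements live in $\widetilde\A$), and in $(5)\Rightarrow(6)$ one should first discard a neighbourhood of $0$ from the uncountable spectrum (as the paper does with $X\setminus(-r,r)$) so that the perfect set, and hence the surjection onto $[0,1]$, is compatible with the requirement $f(0)=0$. The $c_0$-decomposition of $\I^{At}$ you ask about for $(2)\Rightarrow(3)$ is indeed available from Proposition \ref{atoms-are-compact} together with the structure theorem 1.4.5 of the Arveson reference, exactly as the paper uses it.
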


The sequence from (\ref{ec-sequence-new}) will be called the Cantor-Bendixson sequence.
Note that unlike the sequence from (\ref{ec-sequence-old}), it is unique.
The ordinal $ht(A)$ is called the Cantor Bendixson height of $\A$.

These conditions have very precise analogs in the commutative  setting  
(Theorem \ref{eq-compact-scattered}), which
become additionally combinatorial due to the fact that all scattered compact spaces are
totally disconnected, i.e., are precisely represented by the Boolean algebras
of their clopen subsets (cf. Theorem  \ref{eq-superatomic}). One can also note that
scattered $C^*$-algebras  possess the noncommutative version
of the zero-dimensionality, i.e., they are of real rank zero (Proposition \ref{real-rank-zero}).

In section 4, after the proof of Theorem
\ref{theorem1} we present several proofs of known properties of scattered $C^*$-algebras
using our equivalences, for example, Kusuda's result  that being scattered is determined by
separable commutative subalgebras or that a $C^*$-algebra is scattered if and
only if all of its subalgebras are AF (\cite{kusuda-af}) in the sense that
every finite set of elements can be approximated from a finite-dimensional subalgebra.
It should be noted that equivalent definitions of separable approximately finite $C^*$-algebras
are no longer equivalent in the nonseparable context: Farah and 
Katsura proved that in general the above version of being AF does not
imply that  a $C^*$-algebra is the direct limit of a directed system of finite dimensional
subalgebras (\cite{farah-katsura}). We adopt the terminology of \cite{farah-katsura}
that is, the above notion is called {\sl locally finite dimensional} (LF). Morover it was
recently proved that the properties LF and AF are not equivalent even for 
 for scattered $C^*$-algebras (\cite{aiau}).

In section 5, we calculate
the Cantor-Bendixson derivatives of the tensor product $\A\otimes\K(\ell_2)$ which is
used extensively in the last section.

Having developed a language analogous to the Cantor-Bendixson derivatives for
locally compact Hausdorff spaces the  next natural  step is to ask if analogous
phenomena happen in the noncommutative context.  The phenomena we focus on
are related to the interaction between the height and the width of locally compact spaces:

\begin{definition}\label{width-height} Suppose that $\A$ is a scattered $C^*$-algebra
with the Cantor-Bendixson sequence $(\mathcal I_\alpha)_{\alpha\leq ht(\A)}$.
The width of $\A$ is the supremum of $\kappa$, where $\mathcal I_{\alpha+1}/\mathcal I_\alpha $
 is isomorphic to  a nondegenerate subalgebra of
$\K(\ell_2(\kappa))$, and it is denoted by $wd(\A)$. 

A scattered $C^*$-algebra $\A$ is called 
$\kappa$-thin-tall ($\kappa$-thin-very tall) if $ht(\A)=\kappa^+$ ($ht(\A)=\kappa^{++}$)
and $wd(\A)=\kappa$. An $\omega$-thin-tall  ($\omega$-thin-very tall) algebra is called thin-tall
 (thin-very tall). 

A scattered $C^*$-algebra $\A$ is called $\kappa$-short-wide if and only if $\I^{At}(\A)\cong\K(\ell_2(\kappa))$,
$\A/\I^{At}(\A)\cong \K(\ell_2(\kappa^+))$ and 
$ht(\A)=2$. An $\omega$-short-wide algebra is called a $\Psi$-algebra.

\end{definition}

The investigations of thin-tall, thin-very tall commutative algebras as well as $\Psi$-algebras
led to many fundamental discoveries in topology and Banach space theory mentioned in the first
part of the introduction (e.g., Ostaszewski's and Kunen's spaces are thin-tall,
Tall's or Simon's are $\Psi$-spaces and an example of Lindenstrauss and Johnson
from \cite{lj} is of the form $C(K)$ for  a $\Psi$-space $K$).  We focus on the case of $\kappa=\omega$ because it is the most
interesting and also because we would like to avoid discussing here constructions that require
additional set-theoretic assumptions. For example, the only known constructions
of commutative $\kappa$-thin-tall algebras for $\kappa>\omega$ use such assumptions (\cite{koepke})
and for $\kappa$-short-wide algebras, it is known that such assumptions
are necessary (Theorem 3.4 of \cite{baumgartner}).  Also it is known that
already the existence of a thin-very tall commutative $C^*$-algebra is independent
of the usual axioms (\cite{just, bs}). 

As we investigate the noncommutative constructions, we need to
impose strong noncommutativity conditions. We consider two such  conditions,
the stability of a $C^*$-algebra $\A$, i.e., the condition that $\A\cong \A\otimes \K(\ell_2)$
(see e.g. \cite{rordam-stable}) and the condition that all the quotients $\I_{\alpha+1}/\I_{\alpha}$
for $\alpha<ht(\A)$
are isomorphic to $\K(\ell_2(\kappa_\alpha))$ for some  cardinal $\kappa_\alpha$,
where $(\I_{\alpha})_{\alpha\leq ht(\A)}$ is the Cantor-Bendixson sequence of $\A$.
The latter condition we call the ``full noncommutativity" (Definition \ref{fully}). Section 6
is devoted to proving some simple observations about this notion, which is
equivalent to the fact that all ideals of the algebra are among the ideals
$\I_{\alpha}$ for $\alpha<ht(\A)$ and that the centers of 
the multiplier algebras of all quotients of $\A$
 (in particular of $\A$ itself) are trivial (\ref{chain}). Also
the Cantor-Bendixson sequence coincides with the GCR-composition
series in fully noncommutative scattered algebras (Proposition \ref{gcr-fully}). Stability does
not imply being fully noncommutative (take e.g. $c_0\otimes \K(\ell_2)$)
but for separable   $C^*$-algebras with no unital quotients the converse implication holds (\ref{fnoncom-stable}).

In the last section 7 we construct  fully noncommutative and stable 
examples of thin-tall and $\Psi$-algebras as well as algebras
of width $\kappa$ and height $\theta$ for any ordinal $\theta<\kappa^+$
and any regular cardinal $\kappa$ (\ref{kappa-going-up}, \ref{exists-thin-tall}, \ref{exists-psi}),
showing that one can obtain 
in the noncommutative context all the constructions of scattered $C^*$-algebras
analogous to the commutative examples that do not require additional set-theoretic assumptions.

The most interesting construction is of the thin-tall scattered $C^*$-algebra, which
as in the commutative case (cf. \cite{juhasz-weiss}, \cite{roitmanhandbook}) requires
some kind of decomposition which is nontrivial at countable limit stages of the
construction.
As an interesting side-product of this technique we obtain a nonstable type $I$ algebra which is an
 uncountable inductive limit
of stable algebras (Theorem \ref{long-thin-tall}). Moreover all the involved
$C^*$-algebras  satisfy the following
version of the definition of an AF algebra called after \cite{farah-katsura}
locally finite dimensional : any finite subset can be approximated
from a finite-dimensional subalgebra.

However, a result of  Hjelmborg and R{\o}rdam (Corollary 4.1. \cite{rordam-h}),
shows that countable inductive limits of stable separable 
(or $\sigma$-unital, more generally) $C^*$-algebras, is again stable. 
This also follows from the model theory fact that being stable
is $\forall\exists$-axiomatizable for separable C*-algebras (Proposition 2.7.7 of \cite{Farah-model-theory}), and 
 that $\forall\exists$-axiomatizability is preserved under taking (countable) inductive limits (see Proposition 2.4.4 of     
\cite{Farah-model-theory}). Therefore, our example shows that these
 results can not be strengthened to the  inductive limits along uncountable ordinals.
Moreover this example shows that there may not exist an 
 ideal which is maximal among all   stable ideals of a $C^*$-algebra. This answers the following
question of R{\o}rdam negatively for (only)  nonseparable $C^*$-algebras.
\begin{question}
{(Question 6.5. \cite{rordam-stable})}  Does every (separable) C*-algebra A have a greatest
stable ideal (i.e., a stable ideal that contains all other stable ideals)?
\end{question}
The argument here is based on the fact that we have
the complete list of ideals in a  fully noncommutative scattered $C^*$-algebra 
 (Lemma \ref{all-ideals}). This construction has been improved in \cite{thin-tall}.

Two further natural questions are whether the consistency results concerning
superatomic Boolean algebras or locally compact scattered spaces 
can be obtained in the fully noncommutative and stable forms and weather
the wider context of noncommutative $C^*$-algebras provides new possibilities.
Perhaps the
most interesting one is if it is consistent that there is a scattered $C^*$-algebra
of countable width and height $\omega_3$. Whether this is possible 
in the commutative case, is a very well known and old open problem.
This leads to a more general question, whether ``behind" any scattered $C^*$-algebra
there is a commutative scattered $C^*$-algebra which ``carries" similar
combinatorics. Here one should mention that
there are nonseparable scattered $C^*$-algebras with no
nonseparable commutative subalgebras (see \cite{tristan}, \cite{akemann-doner}, however  there is
a commutative algebra ``behind" this example) and a very interesting result of Kusuda in \cite{kusuda-crossed},
based on Dauns-Hofmann theorem, shows that a $C^*$-algebra is scattered if and
only if it is GCR and its center is scattered (in most of our examples the center is null). 

One  can also wonder how interesting  such constructions are
from the point of view of $C^*$-algebras. We found two more examples, one of
a thin-tall algebra and the other of a $\Psi$-algebra, which exhibit 
extraordinary behaviors, however, because of their complexity they are
presented in separate papers \cite{psi-space, thin-tall}.

As the paper is intended for readers of diverse backgrounds including set theorists,
classical topologists, Banach spaces theorists and $C^*$-algebraists, many arguments are explained in
details not seen in the papers addressed to a monothematic group.

\subsection{Notation and terminology}
 The notation and the terminology of the paper should be mainly standard.
Initial parts of
two introductory books \cite{invitation} and \cite{murphy} are completely
sufficient as the $C^*$-algebras background. For topology
we use the terminology of \cite{engelking} and for Boolean algebras the ones
of \cite{koppelberg-handbook}, \cite{halmos}. In
particular, we assume familiarity with the Stone
duality. Usually locally compact spaces considered in 
this paper are Hausdorff. The only exception is
the spectrum of a $C^*$-algebra. 
$C_0(X)$  denotes the $C^*$-algebra of complex-valued continuous functions on a locally
compact Hausdorff $X$ vanishing at infinity. If $X$ is compact, we write $C(X)$ for $C_0(X)$.
For a Hilbert space  $\HH$,
 $\B(\HH)$ and $\K(\HH)$ denote the $C^*$-algebras of all bounded operators
on it and all compact operators, respectively. All Hilbert spaces are over the field of complex 
numbers, in particular  the spaces
 $\ell_2(X)$, where $X$ is considered just as a set of indices regardless
of the structure that can be carried by $X$. All ideals in $C^*$-algebras are meant to be two-sided
and closed.
When we talk about a sequence of objects indexed by ordinals, we say
that it is  continuous, if  the terms of the sequence at limit ordinals
are the norm closures of the union of the previous terms, in the case of $C^*$-algebras;
if they are the unions of the previous terms, in the case of Boolean algebras;
and if  they are interesections of the previous terms, in the case of compact spaces.
For a $C^*$-subalgebra $\mathcal A$ of $\mathcal B(\mathcal \HH)$
and a subset $\mathcal D$ of $\mathcal H$, we use the notation
$[\mathcal A \mathcal D]$ to denote the closed subspace of $\mathcal H$
 spanned by the vectors $T\xi$ for $T\in\mathcal A$ and $\xi\in \mathcal D$.
$\mathcal A|\mathcal H$ denotes the set of restrictions of all
elements of $\A$ to an invariant subspace $\HH$. 
The (minimal) unitization of a $C^*$-algebra $\A$ will be denoted by $\widetilde{\A}$.
Masa means a maximal self-adjoint abelian subalgebra and all the ideals considered in this paper are 
closed two-sided ideals. Also $\mathfrak c$ denotes the cardinal continuum.

\section{Review of the commutative case - scattered compact spaces}

We start reviewing the commutative situation by recalling basic facts concerning
Boolean algebras:

\begin{definition}\label{atoms} An atom of a Boolean algebra $A$ is its nonzero element $a$
such that $0\leq b\leq a$ implies $b=0$ or $b=a$ for $b\in A$. 
$I^{At}(A)$ denotes the Boolean ideal generated by the atoms of $A$.
\end{definition}

\begin{theorem}[\cite{mostowski-tarski}, \cite{day}]\label{eq-superatomic} Suppose that $A$ is a Boolean algebra. Then
the following conditions are equivalent:
\begin{enumerate}
\item Every subalgebra of $A$ has an atom.
\item $A$ does not contain any free infinite subalgebra.
\item Every homomorphic image of $A$ is atomic, that is has an atom below 
every nonzero element.
\item Every homomorphic image of $A$ has an atom. 
\item There is an ordinal $ht(A)$
and a continuous  increasing sequence of ideals $(I_\alpha)_{\alpha\leq ht(A)}$
whose union is $A$
such that
$$I^{At}(A/I_\alpha)=\{[a]_{I_\alpha}: a\in I_{\alpha+1}\}$$
holds for each $\alpha<ht(A)$.
\item There is an ordinal $k(A)$,
and a continuous  increasing sequence of ideals $(J_\alpha)_{\alpha<k(A)}$
whose union is $A$
such that for every $\alpha<k(A)$ whenever
$J\subseteq A$ is an ideal of $A$ such that $J_\alpha\subseteq J\subseteq J_{\alpha+1}$,
then $J=J_\alpha$ or $J=J_{\alpha+1}$.
\end{enumerate}
\end{theorem}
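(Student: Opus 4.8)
The plan is to isolate one combinatorial lemma and then organize the six conditions into two clusters — the ``subalgebra'' conditions (1),(2) and the ``ideal/quotient'' conditions (3)--(6) — establishing each cluster internally and then bridging them. The lemma I would use is the standard fact that a nonzero \emph{atomless} Boolean algebra contains a copy of the free Boolean algebra on $\omega$ generators; the proof recursively builds a sequence $(d_n)_{n<\omega}$ of independent elements, at each step using atomlessness to split a nonzero element into two nonzero pieces so that all $2^n$ meets $d_0^{\epsilon_0}\wedge\cdots\wedge d_{n-1}^{\epsilon_{n-1}}$ stay nonzero. Since a free algebra on infinitely many generators is itself atomless and hence has no atom, this lemma immediately yields (1)$\Leftrightarrow$(2): a free infinite subalgebra is an atomless subalgebra, and conversely any atomless subalgebra contains a free infinite one.

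For the ideal cluster I would treat (4) as the hub. The implication (4)$\Rightarrow$(3) comes from relativization: for $A/I$ nonzero and $[a]_I\neq 0$, the relative algebra $(A/I)\!\upharpoonright\![a]_I$ is a homomorphic image of $A$ (via $x\mapsto x\wedge[a]_I$ composed with $A\to A/I$), so by (4) it has an atom, which is an atom of $A/I$ below $[a]_I$; thus atoms are dense and (3) holds, while (3)$\Rightarrow$(4) is trivial. For (4)$\Leftrightarrow$(5) I would iterate the Cantor--Bendixson derivative $A\mapsto A/I^{At}(A)$: define $I_{\alpha+1}$ as the preimage of $I^{At}(A/I_\alpha)$ and take unions at limits. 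This always gives a continuous increasing chain that stabilizes at some ideal $I_\beta$ with $A/I_\beta$ atomless; condition (4) forces $I^{At}(A/I_\alpha)\neq\{0\}$ whenever $I_\alpha\neq A$, so the chain strictly increases until it reaches $A$, giving (5). For the converse, a least-index argument shows that if the sequence reaches $A$ then any nonzero quotient $A/I$ has an atom: the first $I_{\alpha+1}$ not contained in $I$ occurs at a successor stage, and an atom of $A/I_\alpha$ lying outside $I$ maps to an atom of $A/I$. Finally (4)$\Leftrightarrow$(6): one refines the Cantor--Bendixson chain by well-ordering, for each $\alpha$, the atoms generating $I^{At}(A/I_\alpha)$ and inserting the ideals they generate one atom at a time; each one-atom quotient admits no intermediate ideal (an ideal with no proper nonzero subideal is principal on an atom), which is exactly the gap condition of (6), and the same least-index argument turns (6) back into (4).

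The crux — and the step I expect to be the main obstacle — is bridging the two clusters, since subalgebras and quotients are dual rather than directly comparable. I would close the cycle with two transfer arguments. For (5)$\Rightarrow$(1): given a nonzero subalgebra $B$, choose the minimal $\alpha$ with $B\cap I_{\alpha+1}\neq\{0\}$ (necessarily a successor, by continuity), so that $B\cap I_\alpha=\{0\}$ and $B$ embeds into $A/I_\alpha$; any $0\neq b\in B\cap I_{\alpha+1}$ then lies below a finite join of atoms of $A/I_\alpha$, forcing the interval of $B$ below $b$ to be finite and hence to contain an atom of $B$. For (2)$\Rightarrow$(4) I argue contrapositively: a counterexample to (4) is a nonzero atomless quotient $A/I$, which by the lemma contains a free infinite subalgebra; lifting its free generators through $A\to A/I$ to arbitrary preimages keeps them independent in $A$ (a nontrivial Boolean word vanishing in $A$ would vanish in $A/I$), so $A$ itself has a free infinite subalgebra, contradicting (2). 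Together with (1)$\Leftrightarrow$(2) this closes the equivalence. The delicate points to check carefully are the limit/continuity bookkeeping in the minimal-rank argument and the projectivity (lifting) of the free generators, which is precisely what makes the subalgebra-to-quotient transfer go through.
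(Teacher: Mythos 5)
Your proof is correct and complete, but it is not the route the paper takes: the paper deliberately omits a direct proof of Theorem \ref{eq-superatomic}, remarking that it is subsumed, via Stone duality, by the topological Theorem \ref{eq-compact-scattered} (with references to \cite{roitmanhandbook} and, for item (6), to \cite{koppelberg-minimal}). Under that duality subalgebras of $A$ correspond to continuous images of the Stone space and quotients to closed subspaces, so your conditions (1) and (2) translate into conditions (4) and (6) of Theorem \ref{eq-compact-scattered}, while your (3)--(6) translate into (1)--(3) there; the paper then proves a single cycle of implications in topological language. Your two main devices have exact topological counterparts: the lemma that a nonzero atomless Boolean algebra contains a free subalgebra on $\omega$ generators is the algebraic form of the Cantor tree of open sets built in the step $(6)\Rightarrow(1)$ of Theorem \ref{eq-compact-scattered}, and your lifting of free generators along $A\to A/I$ (independence is preserved because a vanishing elementary meet upstairs would vanish downstairs) replaces the Tietze-extension argument used there to pass from a closed subspace back to the whole space. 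What your approach buys is a self-contained, purely order-theoretic argument that never leaves the category of Boolean algebras and that obtains the intermediate-ideal formulation (6) directly, by refining the Cantor--Bendixson chain one atom at a time, rather than by citation; what the paper's route buys is economy, since one topological proof serves both Theorem \ref{eq-superatomic} and Theorem \ref{eq-compact-scattered} and is the version that is then imitated in the noncommutative Theorem \ref{theorem1}. The only point worth tightening in your write-up is the phrasing in $(5)\Rightarrow(1)$: the ordinal that is a successor is the least $\gamma$ with $B\cap I_{\gamma}\neq\{0\}$, and one should say explicitly that $B\cap I_{\alpha}=\{0\}$ makes the restriction of $A\to A/I_{\alpha}$ to $B$ injective because a Boolean homomorphism whose kernel ideal is trivial is one-to-one.
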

We do not include the proof of the theorem. However, in  a sense it is included below, by the Stone duality,
in the equivalences expressed in the topological language in Theorem \ref{eq-compact-scattered}.
One can also find most of it in \cite{roitmanhandbook}. Item (6) is a version of
Proposition 3.5 from \cite{koppelberg-minimal}.

\begin{definition}A Boolean algebra is called superatomic if and only if
each of its homomorphic images is atomic, i.e., it has an atom below
any nonzero element.
\end{definition}

\begin{definition}A compact Hausdorff space is called scattered if and only
if each of its nonempty subsets has a relative isolated point.
\end{definition}

\begin{lemma}\label{0-dim} Suppose that $K$ is a scattered compact Hausdorff space.
Then $K$ is totally disconnected and so $0$-dimensional. Hence it
is the Stone space of a Boolean algebra. 
\end{lemma}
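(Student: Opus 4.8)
The plan is to prove that a scattered compact Hausdorff space $K$ is totally disconnected, and then invoke the standard fact that a compact Hausdorff totally disconnected space is zero-dimensional, which in turn makes it the Stone space of its Boolean algebra of clopen sets.

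First I would reduce total disconnectedness to showing that any two distinct points $x, y \in K$ can be separated by a clopen set. Equivalently, I will show that the connected component of each point is a singleton, or — what suffices in the compact Hausdorff setting — that the quasicomponents are singletons. Fix distinct points $x,y$ and consider the closed (hence compact) subspace $C$ equal to the connected component of $x$; the goal is to prove $C = \{x\}$. Suppose toward a contradiction that $C$ contains at least two points, so $C$ is a nonempty connected space with more than one point.

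The key step exploits scatteredness: the subspace $C$, being a nonempty subset of $K$, must have a relative isolated point $z$. So there is an open $U \subseteq K$ with $U \cap C = \{z\}$. Since $K$ is compact Hausdorff and hence normal, and $C$ is closed, I can find a clopen-in-$C$ set isolating $z$, namely $\{z\}$ is relatively open in $C$; but $C$ connected with more than one point forces $\{z\}$ to also be non-open or $C\setminus\{z\}$ to be non-closed — more precisely, in a connected space the only sets that are both open and closed are $\emptyset$ and the whole space, so a proper nonempty relatively open-and-closed subset cannot exist. The cleaner route: a nonempty connected Hausdorff space with more than one point has no isolated points at all, because an isolated point $z$ would make $\{z\}$ clopen in $C$ (it is open by isolation, and closed since $C$ is Hausdorff so singletons are closed), contradicting connectedness of $C$ with $|C|>1$. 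Hence $C$ has a relative isolated point by scatteredness, yet $C$ can have none unless $|C| = 1$; therefore $C = \{x\}$.

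Having shown every connected component is a singleton, $K$ is totally disconnected, and since $K$ is compact Hausdorff this yields zero-dimensionality (the clopen sets form a base), so by Stone duality $K$ is the Stone space of the Boolean algebra $Clop(K)$. The main obstacle to watch is the passage from ``isolated point of the component'' to the contradiction: one must be careful that the isolated point is isolated \emph{within the subspace $C$}, and that in a connected space of cardinality greater than one no point can be isolated, which relies on singletons being closed (Hausdorffness) so that isolation produces a nontrivial clopen subset. The compactness and Hausdorffness of $K$ are used to guarantee that total disconnectedness upgrades to zero-dimensionality; this last implication is the only genuinely topological input beyond the elementary connectedness argument, and I would simply cite it as standard.
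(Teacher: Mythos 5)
Your proof is correct and follows essentially the same route as the paper: connected subspaces with more than one point have no isolated points (an isolated point would yield a nontrivial clopen singleton), so scatteredness forces every component to be a singleton, and the upgrade from total disconnectedness to zero-dimensionality for compact Hausdorff spaces is cited as a standard fact (the paper points to 6.1.23 of Engelking). The only differences are cosmetic: you spell out the component argument in more detail, and the point $y$ and the mention of quasicomponents are never actually used.
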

\begin{proof}As connected subspaces have no isolated points, the total disconnectedness
is clear. The fact that for compact Hausdorff spaces the total disconnectedness implies
$0$-dimensionality follows from 6.1.23 of \cite{engelking}.
\end{proof}

\begin{proposition} Suppose that $K$ is a compact Hausdorff and totally disconnected space
and $A$ is a Boolean algebra.
The Boolean algebra of clopen subsets of $K$ is superatomic if and only if
$K$ is scattered. $A$ is superatomic if and only if its Stone space is scattered.
\end{proposition}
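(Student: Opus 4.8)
The plan is to treat both assertions as two faces of a single statement under the Stone duality, so that the real content is a dictionary translating the Boolean notions (atom, homomorphic image, superatomic) into the topological ones (isolated point, closed subspace, scattered), combined with the characterization from Theorem \ref{eq-superatomic} that a Boolean algebra is superatomic if and only if every homomorphic image has an atom (item (4) there). I would prove the first assertion in full and then obtain the second one for free: since $A\cong Clop(St(A))$ and the Stone space $St(A)$ of $A$ is compact Hausdorff and totally disconnected, applying the first assertion to $K=St(A)$ gives that $A$ is superatomic if and only if $St(A)$ is scattered.

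The first step is the pointwise part of the dictionary: for $K$ compact Hausdorff and totally disconnected, the atoms of $Clop(K)$ are exactly the singletons $\{x\}$ with $x$ isolated. Indeed, if $x$ is isolated then $\{x\}$ is open and, being a point of a Hausdorff space, also closed, hence a clopen set with no proper nonempty clopen subset, i.e. an atom in the sense of Definition \ref{atoms}. Conversely, if a nonempty clopen $U$ contains two distinct points $x\neq y$, then because for compact Hausdorff spaces total disconnectedness coincides with $0$-dimensionality (as used in the proof of Lemma \ref{0-dim}, via \cite{engelking}), the clopen sets form a basis and separate $x$ from $y$; intersecting such a separating clopen set with $U$ splits $U$, so $U$ is not an atom. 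Hence $Clop(K)$ has an atom precisely when $K$ has an isolated point, and the same equivalence holds for every closed $F\subseteq K$, which is again compact Hausdorff and totally disconnected.

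Next comes the global part of the dictionary: the homomorphic images of $Clop(K)$ are, up to isomorphism, exactly the algebras $Clop(F)$ for closed $F\subseteq K$, the quotient map being the restriction $U\mapsto U\cap F$ (its kernel is the ideal of clopen sets disjoint from $F$). Combining this with the pointwise dictionary and Theorem \ref{eq-superatomic}(4) yields the chain: $Clop(K)$ is superatomic if and only if every $Clop(F)$ with $F\subseteq K$ closed has an atom, if and only if every nonempty closed $F\subseteq K$ has a relative isolated point. One direction of the match with scatteredness is then immediate, since if $K$ is scattered then in particular each of its closed subsets has a relative isolated point. For the converse I would run the Cantor--Bendixson argument: if some nonempty $S\subseteq K$ had no relative isolated point, then its closure $\overline{S}$ would be a nonempty closed set in which every point is a limit point, so the hypothesis ``every nonempty closed subset has an isolated point'' already forces $K$ to be scattered.

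The main obstacle is not any single computation but correctly assembling the two halves of the duality; the only genuinely topological subtlety is this last closure step, needed because the definition of scattered quantifies over all subsets whereas the homomorphic images of $Clop(K)$ only detect the closed ones. Once that passage from an arbitrary witness $S$ to the closed witness $\overline{S}$ is in place, both assertions follow.
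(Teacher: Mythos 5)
Your proposal is correct and follows essentially the same route as the paper's (much terser) proof: the closure argument to pass from arbitrary subsets to closed ones, plus the Stone duality correspondence between homomorphic images of $Clop(K)$ and closed subspaces of $K$, together with the atom/isolated-point dictionary. You simply make explicit the details (including the use of $0$-dimensionality to split non-singleton clopen sets and the appeal to Theorem \ref{eq-superatomic}(4)) that the paper leaves to the reader.
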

\begin{proof} By taking closures, it is clear that every subset of $K$ has an isolated point
if and only if every closed subset of $K$ has an isolated point. The rest follows from
the Stone duality which gives the correspondence between
the closed subsets of a compact space and homomorphic images of the dual Boolean algebra.
\end{proof}

If $X$ is a topological space by  $X'$ we
 will denote the closed subspace of $X$ consisting of  all nonisolated points of $X$. $X'$ 
is called the Cantor-Bendixson derivative of $X$.
Considering a compact Hausdorff scattered $K$, allows us to obtain more equivalent
conditions for a compact space to be scattered, compared to the  Boolean algebraic conditions from 
Theorem \ref{eq-superatomic}:

\begin{theorem}\label{eq-compact-scattered} Suppose that $K$ is a compact Hausdorff space. Then
the following conditions are equivalent:
\begin{enumerate}
\item Every nonempty (closed) subspace of $K$ has an isolated point.

\item There is an ordinal $ht(A)$
and a continuous  decreasing sequence of
closed subspaces  $(K^{(\alpha)})_{\alpha\leq ht(K)}$ of $K$ such that $K^{(0)}=K$, 
$K^{(ht(K))}=\emptyset$ and 
$$K^{(\alpha+1)}=(K^{(\alpha)})'$$
holds for each $\alpha<ht(K)$.
\item There is an ordinal $m(A)$
and a continuous  decreasing sequence of
closed subspaces  $(L^{(\alpha)})_{\alpha\leq m(K)}$ of $K$
such that $L^{(0)}=K$,
$L^{(m(K)))}=\emptyset$ and 
$L^{(\alpha)}\setminus  L^{(\alpha+1)}$ is one point
 for each $\alpha<m(K)$.
\item Every continuous image of $K$ has an isolated point.
\item The range (spectrum) of every $f\in C(K)$ is countable.
\item $K$ does not map onto $[0,1]$.

\end{enumerate}
\end{theorem}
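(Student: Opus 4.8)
The plan is to prove the six conditions equivalent by running two short cycles through condition (1), namely $(1)\Rightarrow(2)\Rightarrow(3)\Rightarrow(1)$ and $(1)\Rightarrow(4)\Rightarrow(6)\Rightarrow(1)$, together with $(5)\Leftrightarrow(6)$. The backbone is the iterated Cantor--Bendixson derivative $K^{(\alpha)}$, defined by $K^{(0)}=K$, $K^{(\alpha+1)}=(K^{(\alpha)})'$, and $K^{(\lambda)}=\bigcap_{\alpha<\lambda}K^{(\alpha)}$ at limits; each term is closed and the sequence is decreasing, so it stabilizes. For $(1)\Rightarrow(2)$ I would note that at the stabilization point $K^{(\alpha)}=(K^{(\alpha)})'$ is a closed set with no relative isolated point, hence empty by (1); this yields the sequence and defines $ht(K)$. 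For $(2)\Rightarrow(1)$, given any nonempty closed $F\subseteq K$, I would set $\delta=\min\{\rho(x):x\in F\}$, where $\rho(x)$ is the rank, i.e. the unique $\gamma$ with $x\in K^{(\gamma)}\setminus K^{(\gamma+1)}$; minimality forces $F\subseteq K^{(\delta)}$, and any $x\in F$ of rank $\delta$ is isolated in $K^{(\delta)}$, hence isolated in $F$. The passage to arbitrary (not necessarily closed) subspaces is handled by taking closures, as in the preceding proposition. Compactness enters only at limit stages, to guarantee via the finite intersection property that $F\cap K^{(\lambda)}=\bigcap_{\alpha<\lambda}(F\cap K^{(\alpha)})$ stays nonempty as long as the earlier terms are.

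For $(2)\Leftrightarrow(3)$ I would refine the Cantor--Bendixson sequence: each difference $K^{(\alpha)}\setminus K^{(\alpha+1)}$ consists of points isolated in $K^{(\alpha)}$, so it is relatively open and discrete; well-ordering these points and deleting them one at a time produces a continuous sequence with single-point successive differences (deletion preserves closedness since an isolated point is relatively open, and at limit stages the set deleted so far is a union of relatively open singletons). Conversely, given a one-point sequence $(L^{(\gamma)})$, for nonempty closed $F$ I would let $\delta$ be the least index of a point of $F$ in the induced well-ordering; then $F\subseteq L^{(\delta)}$, the unique point $p$ with $L^{(\delta)}\setminus L^{(\delta+1)}=\{p\}$ is isolated in $L^{(\delta)}$ (because $L^{(\delta+1)}$ is closed), and hence isolated in $F$, giving $(3)\Rightarrow(1)$.

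The second cycle rests on the lemma that a continuous image of a scattered compact space is scattered, which simultaneously delivers $(1)\Rightarrow(4)$ and $(1)\Rightarrow(6)$ (as $[0,1]$, having no isolated point, is not scattered). The engine is the elementary fact that for a continuous surjection $g\colon K\to L$ of compact Hausdorff spaces one has $L'\subseteq g[K']$: if some $y\in L$ had its whole fiber inside the open discrete set $K\setminus K'$ of isolated points, then $g^{-1}(y)$ would be a compact discrete, hence finite, union of open singletons, so $U=g^{-1}(y)$ is clopen and $L\setminus g[K\setminus U]$ is an open neighbourhood of $y$ all of whose points have fibers contained in $g^{-1}(y)$, forcing it to equal $\{y\}$; thus $y$ is isolated in $L$. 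Applying this to the restrictions $g|_{K^{(\alpha)}}$ and using monotonicity of the derivative, a transfinite induction gives $L^{(\alpha)}\subseteq g[K^{(\alpha)}]$ for all $\alpha$ (the limit step using that $g$ commutes with decreasing intersections of compacta); hence $K^{(ht(K))}=\emptyset$ forces $L^{(ht(K))}=\emptyset$, so $L$ is scattered by $(2)\Rightarrow(1)$. Then $(4)\Rightarrow(6)$ is immediate: if $K$ mapped onto $[0,1]$, that image would have no isolated point, contradicting (4).

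For the closing direction $(6)\Rightarrow(1)$ and for $(5)\Leftrightarrow(6)$ I would invoke two standard tools: every nonempty perfect compact Hausdorff space admits a continuous surjection onto $[0,1]$, and every uncountable compact metrizable space contains a nonempty perfect subset. Granting these, $(6)\Rightarrow(1)$ goes by contraposition: if (1) fails, the closure of a subspace without relative isolated points is a nonempty perfect closed $F\subseteq K$, which maps onto $[0,1]$; extending this map over $K$ by Tietze's theorem exhibits $K$ as mapping onto $[0,1]$. The equivalence $(5)\Leftrightarrow(6)$ is then routine: a surjection onto $[0,1]$ is an element of $C(K)$ with uncountable range, so $(5)\Rightarrow(6)$; conversely, if some $f\in C(K)$ has uncountable compact metric range, a perfect subset $P$ of that range pulls back to a closed set $f^{-1}(P)$ mapping onto $P$ and thence onto $[0,1]$, which Tietze--extends to $K$. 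I expect the main obstacle to be the first tool: constructing a continuous surjection of an \emph{arbitrary} perfect compact Hausdorff space onto $[0,1]$. Here one must build, from perfectness and Hausdorffness alone, a dyadic scheme of nonempty open sets $U_s$ for $s\in 2^{<\omega}$ with $\overline{U_{s0}},\overline{U_{s1}}\subseteq U_s$ disjoint, extract from it a surjection onto the Cantor set, and compose with the standard surjection of the Cantor set onto $[0,1]$; verifying that the scheme can always be continued is precisely where perfectness is used and is the delicate point of the argument.
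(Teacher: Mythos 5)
Your proposal is correct and uses essentially the same toolkit as the paper's proof: the iterated Cantor--Bendixson derivative for $(1)\Rightarrow(2)$, the one-point refinement for $(2)\Rightarrow(3)$, the ``finite fiber of isolated points forces an isolated image point'' argument for passing to continuous images, and the dyadic scheme of open sets plus Tietze extension for the perfect-set directions. The only differences are organizational --- you run two cycles through $(1)$ and isolate the image lemma as $L'\subseteq g[K']$ with a transfinite induction, where the paper chains $(1)\Rightarrow(2)\Rightarrow\cdots\Rightarrow(6)\Rightarrow(1)$ and argues $(3)\Rightarrow(4)$ directly from the one-point sequence --- and these do not change the substance.
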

\begin{proof}
(1) $\Rightarrow$ (2) By (1) the procedure described in (2) results in a strictly smaller subspace,
hence at some point we end up with the empty subspace.

(2) $\Rightarrow$ (3) The sets $K^{(\alpha)}\setminus K^{(\alpha+1)}$
are made of isolated points of $K^{(\alpha)}$, so any subspace of $K^{(\alpha)}\setminus K^{(\alpha+1)}$
is open in $K^{(\alpha)}$. In particular we can obtain $ K^{(\alpha+1)}$ from
$K^{(\alpha)}$ by producing a decreasing continuous sequence of closed sets each
obtained from the previous by removing one point. This results in the decomposition 
from (3).

(3) $\Rightarrow$ (4) Let $\phi: K\rightarrow M$ be a surjection,  suppose that $K$ satisfies (3) and
 that $M$ has no isolated points. As in compact spaces infinite sets have (necessarily
nonisolated) accumulation point, the preimage $\phi^{-1}(\{x\})$ of any point
$x\in M$ must contain some nonisolated point of $K$, because otherwise it would be
a finite set of isolated points, which would imply that $x$ is isolated by the fact that $\phi$ is
a closed mapping. This means that $\phi[L^{(\alpha)}]=M$ for every $\alpha\leq m(K)$
which contradicts the emptyness of $L^{(m(L))}$.

(4) $\Rightarrow$ (5)  If the range of some $f\in C(K)$ was uncountable,
then it would contain a closed set with no isolated point by Cantor-Bendixson theorem and so
 it would contain a copy of a Cantor set since subsets of $\C$ are metrizable.
This Cantor set can be mapped onto $[0,1]$ and by the Tietze theorem this mapping
can be extended to the entire range of $f$.
The composition would give a continuous mapping from $K$ onto $[0,1]$, contradicting (4).

(5) $\Rightarrow$ (6) clear.

(6) $\Rightarrow$ (1) If $K$ had a subset with no isolated points, its closure $L$
would have no isolated points as well. By the Tietze theorem it is enough to construct a continuous
map from $L$ onto $[0,1]$. As every open subset of $L$ is infinite, one can
construct a Cantor tree of open sets with inclusion of closures, i.e.,
open $U_s$ for $s\in \{0,1\}^{<\N}$ such that $\overline{U_{s^\frown1}},
\overline{U_{s^\frown0}}\subseteq U_s$
and $\overline{U_{s^\frown1}}\cap
\overline{U_{s^\frown0}}=\emptyset$ for each $s\in \{0,1\}^{<\N}$.
Then one can define a continuous function $\phi$ from $M=\bigcap_{n\in \N}\bigcup_{|s|=n}\overline U_s$
onto $\{0,1\}^\N$ by
puting $\phi(x)=y$ whenever $x\in\overline U_s$ if and only if $y|n=s$ for $n=|s|$.
Then  compose $\phi$ with a  function from
$\{0,1\}^\N$ onto $[0,1]$. Finally using the Tietze Theorem one can extend
the composition from $M$ to $L$ and then to $K$.

\end{proof}

For completeness we also add the following theorem (cf. Theorem \ref{ecStar-banach}) which shows the
equivalent properties for $K$ being compact scattered Hausdorff 
in terms of the Banach space of continuous functions, its Banach dual or bidual:

\begin{theorem}[\cite{pelczynski-semadeni}]\label{ec-banach} Suppose that $K$ is a compact Hausdorff space.
All the following conditions are equivalent to $K$ being scattered:
\begin{enumerate}

\item Every Radon measure on $K$ is atomic, i.e., of the form
$\mu=\Sigma_{n\in \N} t_n\delta_{x_n}$ where $x_n\in K$ and $\Sigma_{n\in \N}|t_n|<\infty$.
\item The dual of $C(K)$, the space of Radon measures on $K$, is 
isometric to $\ell_1(K)$.
\item The bidual of $C(K)$ is $\ell_\infty(K)$ where the inclusion $C(K)\subseteq \ell_\infty(K)$
is the canonical embedding of $C(K)$ into the bidual $C(K)^{**}$.
\item The Banach dual of every separable Banach subspace of $C(K)$ is separable.
\end{enumerate}
\end{theorem}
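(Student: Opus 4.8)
The plan is to prove that $K$ is scattered if and only if each of (1)--(4) holds, organizing the whole argument around the single measure-theoretic condition (1): I would first show $K$ scattered $\iff$ (1), then derive (2) and (3) from (1) by Banach-space duality (and back), and finally treat (4) directly against scatteredness. Throughout I identify $C(K)^*$ with the space $M(K)$ of Radon measures with the total variation norm via the Riesz representation theorem.

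For the heart of the matter, $K$ scattered $\iff$ (1), the forward direction goes as follows. Given $\mu\in M(K)$, write $\mu=\mu_a+\mu_c$ for its atomic and non-atomic parts; since a finite measure has at most countably many atoms, $\mu_a$ already has the form $\sum_n t_n\delta_{x_n}$ with $\sum_n|t_n|<\infty$, so it suffices to show $\mu_c=0$. If $\mu_c\neq 0$, put $L=\mathrm{supp}(|\mu_c|)$, a nonempty closed subset of $K$. As $K$ is scattered, $L$ has a relatively isolated point $x$; choosing open $U\subseteq K$ with $U\cap L=\{x\}$ and using that $K\setminus L$ is $|\mu_c|$-null yields $|\mu_c|(U)=|\mu_c|(\{x\})>0$ (positivity since $x$ lies in the support), contradicting that $\mu_c$ is non-atomic. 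For the converse I argue by contraposition: if $K$ is not scattered, then by Theorem \ref{eq-compact-scattered} it contains a nonempty perfect closed set, and the Cantor-scheme construction in the proof of (6) $\Rightarrow$ (1) there produces a topological copy of $\{0,1\}^\N$ inside $K$; transporting the standard product measure to this copy gives a non-atomic Radon measure on $K$, so (1) fails.

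The implication (1) $\Leftrightarrow$ (2) is essentially a restatement: the map $(t_x)_x\mapsto\sum_x t_x\delta_x$ is always an isometric embedding of $\ell_1(K)$ onto the atomic measures in $M(K)$, so (1) says exactly that this embedding is surjective, i.e. that $M(K)$ is canonically isometric to $\ell_1(K)$. For (1) $\Rightarrow$ (3) I dualize: from $C(K)^*=M(K)=\ell_1(K)$ we get $C(K)^{**}=\ell_1(K)^*=\ell_\infty(K)$, and under these identifications the canonical embedding $C(K)\hookrightarrow C(K)^{**}$ becomes $f\mapsto(f(x))_{x\in K}$, the stated inclusion. For (3) $\Rightarrow$ (1) I use that $M(K)$ is an AL-space, hence splits as the $\ell_1$-sum $M(K)=\ell_1(K)\oplus_1 M_c(K)$ of its atomic and non-atomic bands; dualizing gives $C(K)^{**}=\ell_\infty(K)\oplus_\infty M_c(K)^*$, so if (1) failed then $M_c(K)\neq\{0\}$ and the bidual would strictly contain $\ell_\infty(K)$, contradicting (3).

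Finally I treat (4) directly. If $K$ is scattered and $X\subseteq C(K)$ is separable, a countable generating set for $X$ defines a continuous map from $K$ onto a compact metric space $K'$; since continuous images of scattered compacta are scattered (apply Theorem \ref{eq-compact-scattered} to the preimage of a hypothetical perfect subset) and metrizable scattered compacta are countable, $K'$ is countable, so $C(K')$ has the separable dual $M(K')=\ell_1(K')$, and hence so does its subspace $X$ (a dual of a subspace is a quotient of the dual). Conversely, if $K$ is not scattered it maps continuously onto $[0,1]$ by Theorem \ref{eq-compact-scattered}, so $g\mapsto g\circ\phi$ embeds $C([0,1])$ isometrically into $C(K)$, a separable subspace whose dual $M([0,1])$ is non-separable (the masses $\delta_t$, $t\in[0,1]$, are $2$-separated), so (4) fails. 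The hard part throughout is the converse half of the (1)-equivalence, namely manufacturing a genuinely non-atomic measure on a non-scattered $K$; the entire construction rests on extracting the Cantor copy already built in the proof of Theorem \ref{eq-compact-scattered}, after which the diffuse product measure does the work and the remaining implications are routine duality.
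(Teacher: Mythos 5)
The paper does not actually prove Theorem \ref{ec-banach}; it is stated ``for completeness'' with a citation to \cite{pelczynski-semadeni}, so there is no internal proof to compare against. Judged on its own terms, your architecture (everything through condition (1), then duality) is reasonable and most of the steps are fine: the forward direction of (1) via the support of the continuous part, the band decomposition $M(K)=\ell_1(K)\oplus_1 M_c(K)$ for (2) and (3), and both halves of (4) are correct as written.

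There is, however, one genuine flaw, and it sits exactly at the point you yourself identify as ``the hard part'': the claim that for non-scattered $K$ the Cantor-scheme construction in the proof of Theorem \ref{eq-compact-scattered} ``produces a topological copy of $\{0,1\}^\N$ inside $K$.'' It does not: that construction yields a closed set $M=\bigcap_{n}\bigcup_{|s|=n}\overline{U_s}$ together with a continuous \emph{surjection} $M\to\{0,1\}^\N$, and in general no homeomorphic copy of the Cantor set exists. The space $\beta\N$ is compact, Hausdorff and non-scattered, yet contains no nontrivial convergent sequence and hence no infinite compact metrizable subspace at all; so there is nothing to ``transport the standard product measure'' onto. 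The conclusion you want is still true, but it must be obtained by lifting rather than transporting: take the continuous surjection $\phi\colon K\to[0,1]$ from Theorem \ref{eq-compact-scattered}(6), note that $g\mapsto g\circ\phi$ embeds $C([0,1])$ isometrically into $C(K)$, extend the state $g\mapsto\int_0^1 g\,dt$ by Hahn--Banach to a state on $C(K)$, and let $\mu$ be the corresponding Radon probability measure, so that $\phi_*\mu$ is Lebesgue measure. Then $\mu$ is non-atomic, because $\mu(\{x\})>0$ would force $\lambda(\{\phi(x)\})\geq\mu(\phi^{-1}(\phi(x)))\geq\mu(\{x\})>0$. With this substitution the rest of your argument goes through.
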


There are at least two other long list of equivalent conditions for $K$ to be
scattered in terms of the Radon-Nikodym property in the vector valued integration
(\cite{vector-measures}) or in terms of strong differentiability in Banach spaces (\cite{dgz}).

\section{Atoms and the ideals they generate in noncommutative topology}

\subsection{Minimal projections}

\begin{lemma}\label{minimal-one-dimensional} Suppose
that $\mathcal H$ is a Hilbert space and $K$ is a  compact Hausdorff space. Minimal
projections in $\mathcal B(\mathcal H)$ are orthogonal projections
onto one-dimensional subspaces. Minimal projections of $C(K)$ are
characteristic functions $\chi_{\{x\}}$ of isolated points $x\in K$.
\end{lemma}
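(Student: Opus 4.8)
The plan is to treat the two algebras separately, in each case computing the corner $p\mathcal{A}p$ explicitly and reading off exactly when it collapses to $\C p$. Throughout I take minimal projections to be nonzero, as the statement demands (it asks for a one-dimensional, not zero-dimensional, range); this is the standard convention implicit in Definition \ref{definition-minimal}.

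For $\B(\HH)$ the conceptual reason is the standard identification of the corner $p\B(\HH)p$ with $\B(p\HH)$, the bounded operators on the range of $p$, so that minimality should correspond to $\dim(p\HH)=1$; I would, however, argue directly. For the easy direction, suppose $p$ is the orthogonal projection onto a one-dimensional subspace $\C\xi$ with $\xi$ a unit vector. A direct computation gives, for any $T\in\B(\HH)$,
\[ pTp\,\eta = \langle\eta,\xi\rangle\,\langle T\xi,\xi\rangle\,\xi = \langle T\xi,\xi\rangle\,p\eta, \]
so $pTp=\langle T\xi,\xi\rangle p\in\C p$; since $p=p\cdot 1\cdot p$ lies in the corner, $p\B(\HH)p=\C p$ and $p$ is minimal. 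Conversely, suppose $p$ is minimal and, aiming at a contradiction, that $\dim(p\HH)\geq 2$. Choose orthonormal $\xi,\eta\in p\HH$ and let $q$ be the orthogonal projection onto $\C\xi$. Since the range of $q$ sits inside the range of $p$, we have $pq=q$, hence $qp=(pq)^*=q$, and therefore $q=pqp\in p\B(\HH)p=\C p$. But $q$ is a rank-one projection while any nonzero scalar multiple of $p$ has rank $\dim(p\HH)\geq 2$, so $q=\lambda p$ is impossible. This contradiction forces $\dim(p\HH)=1$.

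For $C(K)$ I would first recall that in a commutative $C^*$-algebra a projection $f$ satisfies $f=f^2=\overline f$, so $f$ takes only the values $0$ and $1$ and, being continuous, equals $\chi_U$ for a clopen $U\subseteq K$; nonzero means $U\neq\emptyset$. Using commutativity and $\chi_U^2=\chi_U$, the corner is
\[ \chi_U\,C(K)\,\chi_U=\{g\chi_U:g\in C(K)\}, \]
which, after restriction to $U$, is exactly $C(U)$, since every continuous function on the clopen set $U$ extends by zero to all of $K$. Hence $p=\chi_U$ is minimal if and only if $C(U)=\C\,1_U$, i.e.\ if and only if $U$ is a single point; and a clopen singleton $\{x\}$ is precisely an isolated point of $K$. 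This yields the stated form $\chi_{\{x\}}$, and for the converse one checks directly that $g\chi_{\{x\}}=g(x)\chi_{\{x\}}$, confirming minimality.

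The routine parts are the two explicit corner computations. The only step needing a little care is the backward direction in $\B(\HH)$, where I must manufacture a proper subprojection $q$ inside the corner and verify $q=pqp$ from $pq=q$; the genuine observation there is simply that a rank-one projection cannot be a scalar multiple of a projection of rank at least two, which closes the argument.
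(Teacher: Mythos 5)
Your proof is correct and complete; the paper states this lemma without proof, treating it as a standard fact, and your two direct corner computations (identifying $p\mathcal{B}(\mathcal{H})p$ via rank and $\chi_U C(K)\chi_U$ with $C(U)$) are exactly the expected argument. The one point worth making explicit, which you do handle, is passing from $pq=q$ to $q=pqp$ via adjoints so that the subprojection genuinely lies in the corner.
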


 One easily checks
that if $p\in \mathcal A$ is a minimal projection, then there is a linear functional
$\lambda_p$ such that $pap=\lambda_p(a)p$ for each $a\in \mathcal A$.
For  $p=\chi_{\{x\}}$ and $\mathcal A=C(K)$ the functional is $\delta_x$, the evaluation at $x$.
It can be  proved that for any $C^*$-algebra $\lambda_p$
is a pure state (see Chapter 5 of \cite{murphy}).
Recall that pure states are extreme points of the set of all positive linear functionals of norm $\leq 1$.
In the case of $C(K)$ they are exactly $\delta_x$ for all $x\in K$.

\begin{lemma}[1.4.1 of \cite{invitation}]\label{minimal-in-compact}
 Suppose that $\mathcal A\subseteq \mathcal B(\mathcal H)$ is
a $C^*$-algebra. An element $P\in \mathcal K(\mathcal H)$ is a minimal
projection in $\mathcal A$ if and only if $P$ is a projection  which is minimal
with respect to the usual ordering of projections, i.e., 
there is no nonzero projection $Q\in \mathcal A$ such that $Q\leq P$ and $Q\not=P$.
\end{lemma}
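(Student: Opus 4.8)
The plan is to prove the two implications separately, noting that only one of them requires the compactness of $P$. Throughout, a minimal projection is understood to be nonzero, and for a projection $P$ the relation $Q\leq P$ between projections means $QP=PQ=Q$, so that in particular $Q=PQP$.

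First I would handle the direction that does not use compactness: if $P\mathcal{A}P=\mathbb{C}P$, then $P$ is minimal in the ordering of projections. Indeed, let $Q\in\mathcal{A}$ be any projection with $Q\leq P$. Then $Q=PQP\in P\mathcal{A}P=\mathbb{C}P$, so $Q=\lambda P$ for some scalar $\lambda$. Squaring and using $Q=Q^2$ together with $P\neq 0$ gives $\lambda=\lambda^2$, whence $\lambda\in\{0,1\}$ and $Q\in\{0,P\}$. Thus no nonzero projection lies strictly below $P$.

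For the converse I would use compactness decisively. Since $P\in\mathcal{K}(\mathcal{H})$ is a compact projection, its range $P\mathcal{H}$ is finite-dimensional, say of dimension $n$; hence $P\mathcal{B}(\mathcal{H})P$ is $*$-isomorphic to $M_n(\mathbb{C})$. The set $P\mathcal{A}P$ is a $*$-subalgebra of $P\mathcal{B}(\mathcal{H})P$ with unit $P$, and is therefore a finite-dimensional unital $C^*$-algebra. Now every projection of $P\mathcal{A}P$ is in particular a projection of $\mathcal{A}$ lying below $P$, so by the assumed minimality of $P$ in the projection ordering the only projections in $P\mathcal{A}P$ are $0$ and $P$. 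It then suffices to show that a finite-dimensional $C^*$-algebra whose only projections are $0$ and its unit equals $\mathbb{C}$ times that unit: given a self-adjoint $x\in P\mathcal{A}P$, its spectrum is finite, so each point $\lambda$ is isolated and the spectral projection $\chi_{\{\lambda\}}(x)$ is a continuous function of $x$, hence lies in $P\mathcal{A}P$; these projections are pairwise orthogonal, sum to $P$, and each equals $0$ or $P$, so exactly one equals $P$ and $x=\lambda P$ for a single $\lambda$. Writing an arbitrary element as a combination of two self-adjoints then yields $P\mathcal{A}P=\mathbb{C}P$, as required.

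The crux of the argument, and the only place compactness is genuinely needed, is the passage to the finite-dimensional algebra $P\mathcal{A}P$, which embeds in $M_n(\mathbb{C})$. Without it the statement fails: a projection can be minimal in the ordering while $P\mathcal{A}P$ is an infinite-dimensional $C^*$-algebra with only trivial projections (for instance an algebra isomorphic to $C([0,1])$, whose only projections are $0$ and $1$), in which case $P\mathcal{A}P\neq\mathbb{C}P$. So the main obstacle is exactly to upgrade order-theoretic minimality to the strong algebraic identity $P\mathcal{A}P=\mathbb{C}P$, and finite-dimensionality, extracted from compactness, is precisely what lets the spectral argument close this gap.
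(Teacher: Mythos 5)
Your proof is correct and complete: the easy direction is the scalar computation $Q=PQP=\lambda P$ with $\lambda^2=\lambda$, and the converse correctly exploits that a compact projection has finite rank, so that the corner $P\mathcal{A}P$ is a finite-dimensional unital $C^*$-algebra sitting inside $P\mathcal{B}(\mathcal{H})P\cong M_n(\mathbb{C})$, whose spectral projections of self-adjoint elements must all be $0$ or $P$ by order-minimality. The paper itself offers no proof of this lemma, only the citation to Arveson, so there is nothing to compare against; your argument is a standard and self-contained justification, and your closing remark about $C([0,1])$ matches the caveat the paper makes immediately after the lemma about order-minimal projections that are not minimal in the algebraic sense.
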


In general the projections which are minimal with respect to the ordering do not need
to be minimal in the sense of Definition \ref{definition-minimal}. For
example the unit of $C([0,1])$ is minimal with respect to the order but
does not satisfy Definition \ref{definition-minimal}. This example also shows that
a minimal projection in a subalgebra may not be minimal in a bigger algebra. 
However we have the following:

\begin{lemma}\label{minimal-in-ideal} Suppose that $\A$ is a $C^*$-algebra, $\I\subseteq\A$ is its ideal and
$\mathcal B\subseteq \A$ is its masa. Then a minimal projection in $\I$ or in
$\B$ is a minimal projection in $\A$.
\end{lemma}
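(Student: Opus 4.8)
The plan is to treat the two cases—$p$ minimal in $\I$ and $p$ minimal in $\B$—separately, since they call for genuinely different ideas.

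For the ideal case I would first record the identity $p\A p\cap\I=p\I p$. Indeed, $p\A p$ consists exactly of those $x\in\A$ with $pxp=x$, so any $x\in p\A p\cap\I$ satisfies $x=pxp\in p\I p$, while the reverse inclusion is immediate. Next, since $p\in\I$ and $\I$ is a two-sided ideal, for every $a\in\A$ the element $pap$ lies in $\I$, and trivially $pap\in p\A p$; hence $pap\in p\A p\cap\I=p\I p=\C p$ by minimality of $p$ in $\I$. As $p=ppp\in p\A p$, this gives $p\A p=\C p$, i.e. $p\in At(\A)$. Once the displayed identity is in hand, this case is pure bookkeeping.

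For the masa case the decisive step is the general fact that a corner of a masa by a projection of the masa is again a masa. Concretely, I would show that $p\B p$ is a masa of the hereditary subalgebra $p\A p$, which is a unital $C^*$-algebra with unit $p$. Since $p\in\B$ and $\B$ is abelian, $p$ commutes with $\B$, so $p\B p=\{bp:b\in\B\}$ is an abelian self-adjoint subalgebra of $p\A p$; the content is its \emph{maximality}. So suppose $c=c^*\in p\A p$ commutes with every element of $p\B p$. Using $pc=cp=c$ together with $pb=bp$ for $b\in\B$, the relation $c(pb)=(pb)c$ simplifies to $cb=bc$; thus $c$ commutes with all of $\B$, and maximality of the masa $\B$ forces $c\in\B$, whence $c=pcp\in p\B p$. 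This establishes that $p\B p$ is maximal abelian in $p\A p$.

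Finally I would feed in minimality of $p$ in $\B$: by definition $p\B p=\C p$, so $\C p$ is a masa of $p\A p$ coinciding with its unit. But if $p\A p\neq\C p$, there is a self-adjoint $y\in p\A p\setminus\C p$, and then the abelian $C^*$-subalgebra of $p\A p$ generated by $p$ and $y$ strictly contains $\C p$, contradicting maximality. Hence $p\A p=\C p$ and $p\in At(\A)$. I expect the main obstacle to be precisely the masa case, and within it the verification that $p\B p$ is maximal abelian in $p\A p$: one must be careful that an element of the corner commuting merely with $p\B p$ in fact commutes with the whole of $\B$ back in $\A$, which is exactly where the hypotheses $p\in\B$ and $\B$ abelian are used essentially. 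The ideal case, by contrast, is routine.
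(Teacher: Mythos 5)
Your proof is correct and follows essentially the same route as the paper's: in the ideal case both arguments reduce to the observation that $pap\in\I\cap p\A p=p\I p=\C p$, and in the masa case both hinge on the computation that, because $pb=bp$ and $pbp\in\C p$, any self-adjoint element of the corner $p\A p$ commutes with all of $\B$ and hence lies in $\B$ by maximality. The only cosmetic difference is that you package the masa case as the general statement that $p\B p$ is a masa of $p\A p$ and then specialize to $p\B p=\C p$, whereas the paper applies the same commutant computation directly to $pap$ (after an inessential detour through the Gelfand transform of $\B$).
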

\begin{proof}Suppose $p$ is a minimal projection in $\mathcal I$.
We have a functional $\phi\in \mathcal I^*$ such that $pcp=\phi(c)p$ for every $c\in \mathcal I$.
Consider any $a\in \mathcal A$. Then $pap\in \mathcal I$, so $pap=p(pap)p=\phi(pap)p$
as required for  a minimal projection in $\mathcal A$.

Let $\B$ be a masa of $\A$. The Gelfand transform
$a \rightarrow \hat{a}$ is an $*$-isomorphism between $\mathcal B$ and $C_0 (X)$  for some locally
compact Hausdorff space $X$.  Let the
characteristic function $\chi_{\{x\}}$ of $\{x\}$ be a minimal projection in $C_0 (X)$ and
 $p$ be the unique minimal projection of $\mathcal B$ such that
$\hat{p}= \chi_{\{x\}}$.  We will show that $p$ is also a minimal projection in $\mathcal A$.
Take arbitrary elements $a\in \mathcal A$ and $b\in \mathcal B$. There is $\lambda\in \mathbb C$
such that $pbp= \lambda p$ and
$$
papb= papbp= \lambda pap = pbpap = bpap.
$$
Therefore $pap$ commutes with $\mathcal B$. By the maximality of $\mathcal B$, $pap$ belongs
to $\mathcal B$ and hence $p^2ap^2=pap= \gamma p$ for some scalar $\gamma$, since
$p$ is a minimal projection in $\mathcal B$.
\end{proof}

\begin{lemma}\label{atoms-unitization}
For any $C^*$-algebra $\mathcal A$, $\I^{At}(\widetilde{\A})= \I^{At}(\mathcal A)$.
\end{lemma}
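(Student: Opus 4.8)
The plan is to reduce everything to the claim that $\A$ and $\widetilde{\A}$ have literally the same minimal projections, i.e. $At(\widetilde{\A})=At(\A)\subseteq\A$; once this is established the equality of the two generated $*$-subalgebras $\I^{At}$ is immediate. One inclusion comes for free: $\A$ is a closed two-sided ideal of $\widetilde{\A}$ (it is the kernel of the canonical character $\widetilde{\A}\to\C$), so Lemma \ref{minimal-in-ideal} tells us that every minimal projection of $\A$ is still a minimal projection of $\widetilde{\A}$. Hence $At(\A)\subseteq At(\widetilde{\A})$ and $\I^{At}(\A)\subseteq\I^{At}(\widetilde{\A})$. For the reverse inclusion it suffices to prove $At(\widetilde{\A})\subseteq\A$, because for a minimal projection $P\in\widetilde{\A}$ that happens to lie in $\A$ we automatically have $P\A P\subseteq P\widetilde{\A}P=\C P$ and $P=P^{3}\in P\A P$, so $P\A P=\C P$ and $P\in At(\A)$ as well.

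So next I would dispose of the unital case, where $\widetilde{\A}=\A$ and there is nothing to prove, and then assume $\A$ is non-unital. Writing a minimal projection of $\widetilde{\A}$ in its unique form $P=a+\lambda\cdot 1$ with $a\in\A$ and $\lambda\in\C$, I would read off the constraints on $\lambda$ from $P=P^{*}=P^{2}$: self-adjointness forces $a=a^{*}$ and $\lambda\in\R$, while comparing the scalar components of $P^{2}=P$ gives $\lambda^{2}=\lambda$, so $\lambda\in\{0,1\}$. If $\lambda=0$ then $P=a\in\A$ and we are done by the previous paragraph. The entire content of the lemma is thus to exclude the value $\lambda=1$.

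Assume $\lambda=1$. Then the relation $a^{2}+2a=a$ coming from $P^{2}=P$ shows that $q:=-a$ is a projection in $\A$ and $P=1-q$. The crux is to compute the corner $P\widetilde{\A}P$: expanding $(1-q)(b+\mu\cdot 1)(1-q)$ for $b\in\A$, $\mu\in\C$ yields $P\widetilde{\A}P=\C(1-q)+(1-q)\A(1-q)$, where the second summand lies in $\A$ while $\C(1-q)$ meets $\A$ only in $\{0\}$ precisely because $\A$ is non-unital. Minimality of $P$, namely $P\widetilde{\A}P=\C P$, then forces $(1-q)\A(1-q)=0$. I expect this identification of the compression, together with keeping the twisted multiplication of $\widetilde{\A}$ and the direct-sum bookkeeping straight, to be the main technical obstacle of the argument.

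Finally I would turn $(1-q)\A(1-q)=0$ into a contradiction. For any $c\in\A$ we have $(c(1-q))^{*}(c(1-q))=(1-q)c^{*}c(1-q)=0$, hence $c(1-q)=0$, i.e. $c=cq$; applying the involution gives $c=qc$ as well. Thus $q$ would be a two-sided unit of $\A$, contradicting that $\A$ is non-unital. Therefore $\lambda=1$ cannot occur, every minimal projection of $\widetilde{\A}$ already lies in $\A$ and is minimal there, so $At(\widetilde{\A})=At(\A)$ and consequently $\I^{At}(\widetilde{\A})=\I^{At}(\A)$.
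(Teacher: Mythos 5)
Your proof is correct, and its skeleton coincides with the paper's: the inclusion $\I^{At}(\A)\subseteq\I^{At}(\widetilde{\A})$ via Lemma \ref{minimal-in-ideal}, the reduction to the non-unital case, the decomposition $P=a+\lambda 1$ with $\lambda\in\{0,1\}$ forced by $P=P^{*}=P^{2}$, and the identity $(1+a)b(1+a)=0$ for all $b\in\A$ extracted from minimality of $P$ (your statement $(1-q)\A(1-q)=0$ with $q=-a$ is exactly this identity, since $1-q=1+a$, and your justification via $\C(1-q)\cap\A=\{0\}$ matches the paper's observation that the scalar $\lambda$ in $(a,1)(b,0)(a,1)=\lambda(a,1)$ must vanish). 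The two arguments diverge only in how this identity yields a contradiction: the paper substitutes an approximate unit $(e_\xi)$ for $b$ and passes to the limit $ae_\xi a+e_\xi a+ae_\xi\to a^{2}+2a=a$ to conclude that $-a$ is a unit for $\A$, whereas you substitute $b=c^{*}c$ and invoke the $C^{*}$-identity to get $c(1-q)=0$, hence $c=cq=qc$ for all $c\in\A$, so that $q=-a$ is a two-sided unit. Your variant is marginally more elementary in that it avoids approximate units and limit arguments entirely; both routes arrive at the same contradiction with $\A$ being non-unital. The framing of the reverse inclusion as $At(\widetilde{\A})\subseteq\A$, together with the observation that a minimal projection of $\widetilde{\A}$ lying in $\A$ is automatically minimal in $\A$, is a clean way to package what the paper does implicitly.
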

\begin{proof}
The  inclusion $\I^{At}(\A)\subseteq \I^{At}(\widetilde{\A})$ follows from the fact that $\A$ is an ideal of 
$\widetilde{\A}$ and Lemma \ref{minimal-in-ideal}.

For the proof of the other inclusion we may assume that $\mathcal A$ is not unital.
Suppose that $(a, \alpha)\in \widetilde{\A}$ is  a minimal projection for
$\alpha\not=0$. From this hypothesis we will derive a contradiction with $\mathcal A$ being
nonunital.
The fact that $(a,\alpha)$ must be an idempotent implies that $\alpha=1$ and $a^2=-a$,
while the fact that $(a, 1)$ must be self-adjoint implies that $a$ is selfadjoint.  
The minimality of $(a, 1)$ implies that $(a, 1)(b, 0)(a, 1)=\lambda(a, 1)$
for some $\lambda\in \C$ which must be $0$. So
$aba+ba+ab+b=0$ for every $b\in \mathcal A$. 

Let  a net $(e_\xi)_{\xi\in \Xi}$ be an approximate unit for $\A$ (see Theorem 1.8.2 of \cite{invitation}),
that is in particular we have $\|xe_\xi-x\|\rightarrow 0$ and $\|e_\xi x-x\|\rightarrow 0$ for
every $x\in \A$. Considering $b=e_\xi$ we get that $ae_\xi a+e_\xi a +ae_\xi=-e_\xi$
for all $\xi\in \Xi$ but the left hand side converges to $a^2+2a=a$, so the right hand side $(-e_\xi)$ must
converge to $a$ as well. This shows that $-a$ is the unit for $\A$, a contradiction.

\end{proof}

We conclude this section by the next two elementary lemmas, exhibiting how the existence of certain projections
in a $C^*$-algebra (or its subalgebras) can prevent it from being scattered, by imposing the existence of a copy of 
$C([0,1])$ 
in it.

\begin{lemma}\label{none-interval} Suppose that $\mathcal A$ is a $C^*$-algebra
and $p\in \mathcal A$ is a projection which is not a minimal projection
but such that there is no nonzero
projection $q\lneqq p$. Then $p\mathcal Ap$ contains a copy of $C([0,1])$.
\end{lemma}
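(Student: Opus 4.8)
The plan is to work inside the corner $B=p\mathcal A p$, which is a unital $C^*$-algebra with unit $p$, and to exhibit a commutative copy of $C([0,1])$ inside it. First I would record what the two hypotheses on $p$ give us about $B$. Every projection $q$ of $B$ satisfies $q=pqp$, hence $pq=p(pqp)=pqp=q$, so $q\le p$ in $\mathcal A$; by the assumption that there is no nonzero projection $q\lneqq p$, the only projections of $B$ are $0$ and $p$. On the other hand, since $p$ is \emph{not} a minimal projection we have $p\mathcal A p\ne\C p$, so $B$ strictly contains $\C p$. As $B=B_{sa}+iB_{sa}$, if every self-adjoint element of $B$ lay in $\R p$ we would get $B=\C p$; hence there is a self-adjoint $a\in B$ with $a\notin\C p$ (for self-adjoint $a$, $a\in\C p$ is the same as $a\in\R p$).

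Next I would apply continuous functional calculus to $a$. The unital commutative $C^*$-subalgebra $C^*(a,p)\subseteq B$ generated by $a$ and the unit $p$ is $*$-isomorphic to $C(\sigma(a))$, where $\sigma(a)\subseteq\R$ is the nonempty compact spectrum of $a$ (computed in $B$, equivalently in $C^*(a,p)$ by spectral permanence). Since $a\notin\C p$, the set $\sigma(a)$ has at least two points. The key step, which is where the hypotheses really enter, is to show that $\sigma(a)$ has \emph{no isolated points}: if some $\lambda\in\sigma(a)$ were isolated, then $\{\lambda\}$ would be clopen in $\sigma(a)$, so $\chi_{\{\lambda\}}\in C(\sigma(a))$ would be a nonzero projection, and under the isomorphism it would yield a projection $e\in C^*(a,p)\subseteq B$ with $e\ne 0$ (because $\lambda\in\sigma(a)$) and $e\ne p$ (because $\sigma(a)\ne\{\lambda\}$). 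This contradicts the fact that the only projections of $B$ are $0$ and $p$.

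Finally, $X=\sigma(a)$ is a nonempty compact Hausdorff space with no isolated point, hence not scattered. By Theorem \ref{eq-compact-scattered} (the failure of the condition ``every nonempty subspace has an isolated point'' forces the failure of ``$K$ does not map onto $[0,1]$''), $X$ admits a continuous surjection $\phi\colon X\to[0,1]$; alternatively, $X$ contains a copy of the Cantor set $2^{\N}$, which maps onto $[0,1]$, and one extends this surjection to $X$ by the Tietze theorem without losing surjectivity. The map $f\mapsto f\circ\phi$ is then an isometric $*$-embedding $C([0,1])\to C(X)\cong C^*(a,p)\subseteq p\mathcal A p$ (isometric precisely because $\phi$ is onto), so its image is a copy of $C([0,1])$ in $p\mathcal A p$. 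I expect no serious obstacle here: the only substantive point is the perfectness of $\sigma(a)$ in the second paragraph, and the surjection onto $[0,1]$ is standard and already contained in the proof of Theorem \ref{eq-compact-scattered}.
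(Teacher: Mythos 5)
Your proof is correct and follows essentially the same route as the paper's: both extract a self-adjoint element of $p\mathcal{A}p$ not lying in $\mathbb{C}p$, pass to the unital commutative subalgebra it generates together with $p$, and use the absence of nonzero projections strictly below $p$ to force the resulting spectrum to admit a continuous surjection onto $[0,1]$. The only cosmetic difference is that the paper deduces the spectrum is connected (no nontrivial clopen sets yield no nontrivial projections), while you deduce only that it is perfect and route through a Cantor subset plus Tietze; both suffice.
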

\begin{proof} Pick $q=prp$ for some $r\in \mathcal A$ which is
not of the form $\lambda p$ for some $\lambda\in \C$.
We have that $r=r_1+ir_2$
where $r_1=(r+r^*)/2$ and $r_2=(r-r^*)/2i$ are self adjoint, so we may assume that $r$ is
self adjoint.
As $prp=p^2rp=prp^2$, we have that $p$ and $q$ commute.
  Since  $r$ is self adjoint, we have  $q^*=(prp)^*=p^*r^*p^*=q$. So
$p$ and $q$ are self adjoint commuting elements of $p\mathcal Ap$,
so the subalgebra
 $\mathcal B\subseteq p\mathcal Ap$ which they generate is abelian and unital
and  so isomorphic to $C(K)$ for some  compact $K$.
Projection $p$ in such an algebra corresponds to the characteristic function of $K$.
Note that $K$ has more than one element, since otherwise  $q$ would be of the form $\lambda p$. 
As there is no projection smaller than $p$,
it follows that $K$ is connected, so
there is a continuous surjection $\phi: K\rightarrow [0,1]$.  
Now $\iota: C([0,1])\rightarrow \B\subseteq p\A p$ given by $\iota(f)=\phi\circ f$
is the required $*$-embedding of $C([0,1])$ into $p\A p$.
\end{proof}

\begin{lemma}\label{many-interval} Suppose that
  $\mathcal A$ is a $C^*$-algebra with a nonzero projection such that every 
nonzero projection has a strictly smaller nonzero projection.
Then $\mathcal A$ contains a copy of $C([0,1])$.
\end{lemma}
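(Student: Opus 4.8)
The plan is to use the hypothesis to manufacture, inside $\mathcal A$, an abelian subalgebra isomorphic to $C(\{0,1\}^\N)$ (the continuous functions on the Cantor set) and then to pull back a copy of $C([0,1])$ along a continuous surjection of the Cantor set onto the interval. This is the opposite mechanism to the one in Lemma \ref{none-interval}: there the \emph{absence} of a smaller projection forced a connected spectrum, whereas here the perpetual \emph{presence} of a strictly smaller projection lets us build a splitting (Cantor) tree of projections.

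First I would build a family of nonzero projections $(p_s)_{s\in\{0,1\}^{<\N}}$ so that for every $s$ the projections $p_{s^\frown0}$ and $p_{s^\frown1}$ are orthogonal and $p_{s^\frown0}+p_{s^\frown1}=p_s$. Start with any nonzero projection $p_\emptyset\in\mathcal A$, which exists by hypothesis. Given a nonzero $p_s$, the hypothesis provides a nonzero projection $0\neq q\lneqq p_s$; since $q\leq p_s$, the element $p_s-q$ is again a projection, it is orthogonal to $q$, and it is nonzero because $q\neq p_s$. Setting $p_{s^\frown0}=q$ and $p_{s^\frown1}=p_s-q$ completes the recursion. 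By construction, for each $n$ the $2^n$ projections $\{p_s:|s|=n\}$ are pairwise orthogonal, nonzero, and sum to $p_\emptyset$.

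Next I would identify the generated abelian subalgebra with $C(\{0,1\}^\N)$. For $s\in\{0,1\}^{<\N}$ let $U_s\subseteq\{0,1\}^\N$ be the basic clopen set of all sequences extending $s$, and consider the assignment sending a locally constant function $\sum_{|s|=n}\lambda_s\chi_{U_s}$ to $\sum_{|s|=n}\lambda_s p_s$. Because the $p_s$ with $|s|=n$ are pairwise orthogonal nonzero projections summing to $p_\emptyset$, one has $\|\sum_{|s|=n}\lambda_s p_s\|=\max_{|s|=n}|\lambda_s|=\|\sum_{|s|=n}\lambda_s\chi_{U_s}\|_\infty$, while the splitting relation $p_s=p_{s^\frown0}+p_{s^\frown1}$ makes the assignment independent of the level $n$ used to represent a given locally constant function. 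Hence this is an isometric $\st$-homomorphism on the dense subalgebra of locally constant functions, so it extends to an isometric, and therefore injective, $\st$-isomorphism from $C(\{0,1\}^\N)$ onto the $C^\st$-subalgebra $\mathcal C\subseteq\mathcal A$ generated by $\{p_s:s\in\{0,1\}^{<\N}\}$.

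Finally, the binary-expansion map $\pi:\{0,1\}^\N\to[0,1]$, $\pi(x)=\sum_{n\geq1}x_n2^{-n}$, is a continuous surjection, so $f\mapsto f\circ\pi$ is an injective $\st$-homomorphism of $C([0,1])$ into $C(\{0,1\}^\N)\cong\mathcal C\subseteq\mathcal A$, exhibiting the desired copy of $C([0,1])$. I expect the only genuinely technical point to be the verification that the level-$n$ assignments are mutually compatible and jointly isometric; this is precisely where the relations $p_s=p_{s^\frown0}+p_{s^\frown1}$ and the same-level orthogonality are used, and it is the direct algebraic analogue of the Cantor-tree construction already carried out in the proof of Theorem \ref{eq-compact-scattered}, $(6)\Rightarrow(1)$.
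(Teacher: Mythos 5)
Your proof is correct and follows essentially the same route as the paper: both arguments build the same splitting tree of nonzero projections $(p_s)_{s\in\{0,1\}^{<\N}}$ with $p_{s^\frown0}+p_{s^\frown1}=p_s$ and then realize $C([0,1])$ inside the abelian subalgebra they generate via a continuous surjection of a Cantor-type space onto $[0,1]$. The only difference is cosmetic: the paper passes through the Gelfand transform to an abstract $C(K)$ and reuses the clopen-tree argument from Theorem \ref{eq-compact-scattered}, whereas you identify the generated subalgebra explicitly with $C(\{0,1\}^\N)$ and pull back along the binary-expansion map, which is a perfectly valid (and slightly more self-contained) way to finish.
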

\begin{proof}
Using the hypothesis we can construct  projections $(p_s: s\in \{0,1\}^{<\N})$
such that $p_{s^\frown 0}+p_{s^\frown 1}=p_s$ for each $s\in \{0,1\}^{<\N}$
and all the projections are nonzero. If $s\subseteq t$ then $p_t\leq p_s$
and so $p_tp_s=p_tp_s=p_t$, that is all $p_s$s commute and
so the algebra generated by them is $*$-isomorphic to a $C(K)$
for some compact Hausdorff $K$. The projections $p_s$ correspond
in $C(K)$ to a tree of nonempty clopen sets $(U_s: s\in \{0,1\}^{<\N})$
such that $U_{s^\frown 0}+U_{s^\frown 1}=U_s$ for each $s\in \{0,1\}^{<\N}$.
Like in the proof of \ref{eq-compact-scattered} (5) $\Rightarrow$ (1)
this yields a surjective continuous map
$\phi: K\rightarrow [0,1]$ and so $C([0,1])\subseteq C(K)\subseteq \mathcal A$,
as required.
\end{proof}

\subsection{Irreducible representations and the ideal $\I^{At}(\A)$}
An irreducible
representation $\pi$ of a $C^*$-algebra $\mathcal A$ on a Hilbert space $\mathcal H$ 
is a $^*$-homomorphism  from $\mathcal A$ into $\mathcal B(\mathcal H)$ such
that its range $\pi[\A]$ is an irreducible subalgebra of $\mathcal B(\mathcal H)$, i.e., it has no nontrivial 
$\pi[\A]$-invariant closed
subspace in $\mathcal H$. Throughout this section we use different equivalent conditions 
for a representation to be
irreducible (see e.g., \cite[II.6.1.8.]{Blackadar}).
In the commutative case, irreducible representations correspond to the multiplicative functionals,
i.e., functionals of the form $\pi_x: C_0(X) \rightarrow \mathbb C$,
where $\pi_x (f)=f(x)$ for some $x\in X$. Here
the Hilbert space $\HH$ is the one-dimensional space $\C$.

\begin{lemma}\label{irreducible-subrepresentations}
Suppose that $\mathcal A$ is a $C^*$-algebra and $\pi:\mathcal A \rightarrow \mathcal B(\mathcal H)$ is
a representation of $\mathcal A$.
Let $p$ be a minimal projection in $\mathcal A$, a unit vector $v$
be in the range of $\pi(p)$ and $\mathcal H_0 = [\pi[\mathcal A] v]$.
Then $\pi[\mathcal A]|_{\mathcal H_{0}}$ is an irreducible subalgebra
 of $\mathcal B (\mathcal H_{0})$.
\end{lemma}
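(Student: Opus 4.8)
The plan is to exploit the minimality identity for $p$ together with the fact that $v$ sits in the range of the orthogonal projection $\pi(p)$. First I would record the structural preliminaries: since $\pi$ is a $\ast$-homomorphism, $\pi(p)=\pi(p)^*=\pi(p)^2$ is an orthogonal projection, and $v$ being in the range of $\pi(p)$ means $\pi(p)v=v$. By construction $\mathcal H_0=[\pi[\mathcal A]v]$ is a closed $\pi[\mathcal A]$-invariant subspace, so $\pi[\mathcal A]|_{\mathcal H_0}$ really is a $\ast$-subalgebra of $\mathcal B(\mathcal H_0)$ and $v$ is a cyclic vector for it. Recall also, from the remark after Lemma \ref{minimal-one-dimensional}, that minimality of $p$ furnishes a functional $\lambda_p$ with $pap=\lambda_p(a)p$ for every $a\in\mathcal A$.

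The key step is to show that $\pi(p)$ collapses all of $\mathcal H_0$ onto the line $\C v$. For a generating vector $\pi(a)v$ I would first use $\pi(p)v=v$ to rewrite $\pi(a)v=\pi(a)\pi(p)v=\pi(ap)v$, and then compute $\pi(p)\pi(a)v=\pi(p)\pi(ap)v=\pi(pap)v=\lambda_p(a)\pi(p)v=\lambda_p(a)v\in\C v$. Since the vectors $\pi(a)v$ span a dense subspace of $\mathcal H_0$, $\pi(p)$ is bounded, and $\C v$ is closed, this forces $\pi(p)\mathcal H_0\subseteq\C v$; as $\pi(p)v=v\neq 0$ we conclude that $\pi(p)|_{\mathcal H_0}$ is precisely the rank-one orthogonal projection onto $\C v$.

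With this in hand irreducibility follows by the standard orthogonal-complement dichotomy. Suppose $\mathcal L\subseteq\mathcal H_0$ is a nonzero closed $\pi[\mathcal A]$-invariant subspace; because $\pi[\mathcal A]$ is closed under adjoints, the complement $\mathcal L^\perp$ taken inside $\mathcal H_0$ is invariant as well, and both subspaces are carried into themselves by $\pi(p)$. Writing $v=v_1+v_2$ with $v_1\in\mathcal L$ and $v_2\in\mathcal L^\perp$ and applying $\pi(p)$, the facts that $\pi(p)v=v$ and that $\pi(p)$ sends each summand into $\C v$ while preserving $\mathcal L$ and $\mathcal L^\perp$ give $\alpha v\in\mathcal L$ and $\beta v\in\mathcal L^\perp$ with $\alpha+\beta=1$. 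Since the unit vector $v$ cannot lie in both $\mathcal L$ and $\mathcal L^\perp$, exactly one of $\alpha,\beta$ is nonzero: if $\alpha\neq 0$ then $v\in\mathcal L$, whence $\mathcal H_0=[\pi[\mathcal A]v]\subseteq\mathcal L$ and $\mathcal L=\mathcal H_0$; if $\beta\neq 0$ then $v\in\mathcal L^\perp$, whence $\mathcal L=\{0\}$. Thus $\mathcal H_0$ has no nontrivial invariant subspace.

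The one point requiring genuine care is the rank-one computation for $\pi(p)|_{\mathcal H_0}$: one must remember to convert $\pi(a)v$ to $\pi(ap)v$ via $\pi(p)v=v$ before invoking $pap=\lambda_p(a)p$, and then extend from the dense span to all of $\mathcal H_0$ by boundedness; the rest is routine bookkeeping with invariant subspaces. A more conceptual alternative would be to note that the vector state $a\mapsto\langle\pi(a)v,v\rangle$ equals $\lambda_p$ by the same identity and to quote the correspondence between pure states and irreducible GNS representations, but I would prefer the direct argument above since it is self-contained and matches the elementary level of this section.
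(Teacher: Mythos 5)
Your argument is correct, and it takes a genuinely different route from the paper's. The paper verifies irreducibility through the commutant criterion: it takes $T\in\mathcal B(\mathcal H_0)$ commuting with $\pi_0[\mathcal A]$, normalizes so that $(Tv,v)=0$, and uses $pb^*ap=\lambda p$ to show $(T\pi_0(a)v,\pi_0(b)v)=0$ for all $a,b$, whence $T=0$ by density; this requires quoting the equivalence of irreducibility with triviality of the commutant. You instead work directly with the definition of irreducibility used in the paper (no nontrivial closed invariant subspace): you first extract the structural fact that $\pi(p)|_{\mathcal H_0}$ is the rank-one orthogonal projection onto $\C v$ (via the same minimality identity $pap=\lambda_p(a)p$, applied after rewriting $\pi(a)v=\pi(ap)v$), and then run the orthogonal-complement dichotomy on an invariant subspace $\mathcal L$ by decomposing $v=v_1+v_2$. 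Both proofs hinge on the same identity for the minimal projection, but yours avoids the commutant lemma entirely and is more self-contained; it also isolates the rank-one statement about $\pi(p)$ on $\mathcal H_0$, which the paper only establishes later (Lemma \ref{preserving-minimal}) as a separate step. The paper's commutant computation is closer to the standard pure-state/GNS argument and generalizes more readily. One small presentational point: having assumed $\mathcal L$ nonzero, your case $\beta\neq 0$ yields $v\in\mathcal L^{\perp}$ and hence $\mathcal H_0\subseteq\mathcal L^{\perp}$, forcing $\mathcal L=\{0\}$, which is a contradiction rather than an admissible alternative; the conclusion is that $\alpha\neq0$ must hold and $\mathcal L=\mathcal H_0$. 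This does not affect correctness.
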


\begin{proof}
Since $\HH_0$ is $\pi[\mathcal A]$-invariant,
the map $\pi_{0}: \mathcal A \rightarrow \mathcal B (\mathcal H_{0})$
defined by $\pi_0(a)=\pi(a)|\mathcal H_{0}$
 is a subrepresentation of $\pi$. Assume that $T\in \mathcal B (\mathcal H_0)$
 commutes with $\pi_{0} [\mathcal A]$. We will show that $T\in \mathbb C I$
which is sufficient as the triviality of the commutant is equivalent to being irreducible (\cite[II.6.1.8.]{Blackadar}).
 By replacing $T$ with $T - (T v, v)I$, we may assume $(T v,v)=0$. Under this
assumption we will
show that $T=0$. Now if $a, b \in \mathcal A$ we
have $\pi_{0}(pb^{*}ap)=\lambda \pi_0(p)$ for some $\lambda\in \C$. Therefore
$$
(T\pi_{0} (a) v, \pi_{0}(b)v)=(T\pi_{0} (a)\pi_0(p) v, \pi_{0}(b)\pi_0(p)v)= 
(T \pi_{0}(pb^{*}ap)v, v)=$$
$$=\lambda (T \pi_{0}(p)v, v)=\lambda (T v , v)=0.
$$
Since $\mathcal H_0 = [\pi[\mathcal A] v]$,  arbitrary two vectors of $\mathcal H_0$
can be approximated by
vectors  of the form $\pi_{0}(a) v$ and $\pi_{0}(b)v$ for some $a, b\in \mathcal A$,
so it follows that $T=0$.

\end{proof}

\begin{lemma}\label{preserving-minimal} Suppose that $\mathcal A$ is an irreducible
 $*$-subalgebra of $\B(\mathcal H)$.  If $P$ is a minimal projection in $\mathcal A$, then
$P$ is a minimal projection in $\mathcal B(\mathcal H)$
\end{lemma}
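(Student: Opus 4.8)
The plan is to show that $P$, regarded as a projection in $\B(\HH)$, has one-dimensional range; by Lemma \ref{minimal-one-dimensional} this is precisely what it means for $P$ to be a minimal projection of $\B(\HH)$. Write $V=P\HH$ for the range of $P$. Since $P$ is minimal in $\A$, as noted after Lemma \ref{minimal-one-dimensional} there is a linear functional $a\mapsto \lambda_a$ on $\A$ with $PaP=\lambda_a P$ for every $a\in\A$.

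The computation I would record first is that $\A$ can only see scalars between vectors of $V$: for $\xi,\eta\in V$ and $a\in\A$, using $P\xi=\xi$, $P\eta=\eta$ and $P=P^*$,
$$(a\xi,\eta)=(aP\xi,P\eta)=(PaP\xi,\eta)=\lambda_a(\xi,\eta).$$
In particular, if $v,w\in V$ are orthogonal, then $(av,w)=0$ for every $a\in\A$.

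Next I would bring in irreducibility. For any nonzero $v\in V$ the subspace $[\A v]$ is a closed $\A$-invariant subspace containing $v$ (indeed $v\in \A v$, since $Pv=v$ and $P\in\A$), so by irreducibility it equals all of $\HH$; thus $\A v$ is dense in $\HH$. I want to stress that the argument should use irreducibility in this form, via the no-nontrivial-invariant-subspace definition, rather than via Kadison's transitivity theorem, because $\A$ is only assumed to be a $*$-subalgebra and need not be norm closed.

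Finally I would argue by contradiction. Suppose $\dim V\geq 2$ and choose orthonormal $v,w\in V$. By density of $\A v$ pick $a\in\A$ with $\|av-w\|<1/2$; then $|(av,w)-1|=|(av-w,w)|\leq\|av-w\|<1/2$, so $(av,w)\neq 0$. This contradicts $(av,w)=\lambda_a(v,w)=0$ coming from the identity above. Hence $\dim V=1$, and $P$ is a minimal projection of $\B(\HH)$. The only step demanding care is the correct invocation of irreducibility (through density of $\A v$ in $\HH$ rather than through exact transitivity); the rest is the direct computation of the displayed identity.
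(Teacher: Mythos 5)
Your proposal is correct and follows essentially the same route as the paper: both arguments use the functional $\lambda_a$ with $PaP=\lambda_a P$ to show that $(av,w)=0$ whenever $v,w$ are orthogonal vectors in the range of $P$, and then derive a contradiction with irreducibility via the subspace $[\A v]$ (the paper exhibits it as a proper invariant subspace contained in $\{w\}^\perp$, while you note it must be all of $\HH$ and approximate $w$ by $\A v$ --- the same contradiction read in opposite directions). Your explicit remark that $v=Pv\in\A v$, so that $[\A v]\neq\{0\}$, is a small detail handled slightly more carefully than in the paper's version.
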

\begin{proof}  By Lemma \ref{minimal-one-dimensional} it is enough
to show that $P$ can not be an orthogonal projection onto a subspace with 
dimension more than one. If this was the case, take a nonzero vector $v$ in
the range of $P$ and consider $\mathcal X=[\mathcal A v]\subseteq \mathcal H$.
We claim that under this hypothesis $\mathcal X$ would be a proper $\A$-invariant subspace of $\mathcal H$
contradicting the irreducibility of $\A$.
It is clear that it is an $\A$-invariant subspace of $\mathcal H$.
As $P(v)=v$ and for some $\lambda\in \C$ by the minimality
of $P$  we have  $PTP=\lambda P$ , we obtain $PT(v)=(PTP)(v)=\lambda P(v)\in \C v$
for every $T\in \mathcal A$. But if the range of $P$ is
more than one-dimensional, there is a nonzero vector $w$ in the range of
$P$  which is orthogonal to $v$. Since $P(w)=w$,  for each $T\in \A$ we have 
$$(T(v), w)=((I-P)Tv, w)+(PTv, w)=(Tv, (I-P)w)=(Tv, 0)=0$$
which shows that $\mathcal X\subseteq \{w\}^\perp$ is a proper $\mathcal A$-invariant  subspace of $\mathcal H$,
which contradicts the irreducibility of $\mathcal A$.

\end{proof}

\begin{lemma}\label{minimal-only-one} Suppose that $\mathcal A$ is a $C^*$-algebra and $p$ is its
minimal projection. Let $\pi_i: \mathcal A\rightarrow \mathcal B(\mathcal H_i)$
for $i=1,2$ be irreducible representations of $\mathcal A$. If
$\pi_1(p)\not=0\not=\pi_2(p)$, then $\pi_1$ and $\pi_2$ are
unitarily equivalent.
\end{lemma}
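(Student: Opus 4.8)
The plan is to realize both irreducible representations as the GNS representations associated with the single pure state $\lambda_p$ determined by the minimal projection $p$, and then to conclude by the uniqueness of the GNS construction. Recall from the discussion after Lemma \ref{minimal-one-dimensional} that minimality of $p$ gives a functional $\lambda_p$ with $pap=\lambda_p(a)p$ for all $a\in\mathcal A$, and that $\lambda_p$ is in fact a pure state.

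First I would use Lemma \ref{preserving-minimal}: since each $\pi_i[\mathcal A]$ is irreducible and $\pi_i(p)$ is a nonzero minimal projection in $\pi_i[\mathcal A]$, the operator $\pi_i(p)$ is a minimal projection in $\mathcal B(\mathcal H_i)$, i.e.\ the orthogonal projection onto a one-dimensional subspace. So I may fix a unit vector $v_i$ in the range of $\pi_i(p)$, and then $\pi_i(p)v_i=v_i$. Next I would compute the vector state $\omega_i(a)=(\pi_i(a)v_i,v_i)$ by inserting $\pi_i(p)$ on both sides and applying $pap=\lambda_p(a)p$:
$$\omega_i(a)=(\pi_i(a)v_i,v_i)=(\pi_i(p)\pi_i(a)\pi_i(p)v_i,v_i)=(\pi_i(pap)v_i,v_i)=\lambda_p(a)(v_i,v_i)=\lambda_p(a).$$
Thus $\omega_1=\omega_2=\lambda_p$ as states on $\mathcal A$, for $i=1,2$.

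It remains to note that each $v_i$ is a cyclic vector for $\pi_i$: since $\pi_i$ is irreducible and $v_i\neq 0$, the closed invariant subspace $[\pi_i[\mathcal A]v_i]$ must be all of $\mathcal H_i$. Hence $(\pi_i,\mathcal H_i,v_i)$ is a cyclic representation whose associated state is $\lambda_p$, and the uniqueness clause of the GNS theorem yields a unitary $U\colon\mathcal H_1\to\mathcal H_2$ with $Uv_1=v_2$ and $U\pi_1(a)=\pi_2(a)U$ for all $a\in\mathcal A$, so $\pi_1$ and $\pi_2$ are unitarily equivalent. The computation is routine once the right viewpoint is chosen; the only real point to get right is the conceptual reduction to GNS, namely verifying that the two vector states genuinely coincide with $\lambda_p$ and that the chosen vectors are cyclic, so that the GNS uniqueness theorem applies verbatim.
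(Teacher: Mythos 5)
Your proof is correct and follows essentially the same route as the paper: the paper's argument is precisely the uniqueness part of the GNS construction written out by hand, i.e.\ it inserts $\pi_i(p)$ to show $\|\pi_1(a)v\|^2=\lambda_p(a^*a)=\|\pi_2(a)w\|^2$, defines the intertwining isometry $\pi_1(a)v\mapsto\pi_2(a)w$ directly, and uses irreducibility for cyclicity, exactly the content of the GNS uniqueness theorem you invoke. (Your appeal to Lemma \ref{preserving-minimal} is harmless but not needed; any unit vector in the range of $\pi_i(p)$ suffices.)
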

\begin{proof}
Let $\phi\in \mathcal A^*$ be a functional such that $pap=\phi(a)p$ for every
$a\in \mathcal A$.  As $\pi_1(p)\not=0\not=\pi_2(p)$ we can 
pick norm one vectors $v$ and $w$ in the ranges of $\pi_1(p)$ and $\pi_2(p)$, respectively.
Then for every $a\in \mathcal{A}$ we have
\begin{eqnarray}
\nonumber       \| \pi_1 (a) v  \|^2 & = &  \| \pi_1 (ap) v  \|^2= (\pi_1 (ap) v,\pi_1 (ap) v)  \\
\nonumber                                         & = &   ( \pi_1 (pa^{*}ap) v, v) =\phi(a^*a) (v, v)     \\
\nonumber                                         & = &   \phi(a^*a) (w,w)= (\pi_{2} (pa^{*} a p) w, w)  \\
\nonumber                                         & = & \|\pi_2 (a) w\|^2.
\end{eqnarray}
This implies that  that $\pi_1(a)v=\pi_1(a')v$ is equivalent to $\pi_2(a)w=\pi_2(a')w$ for any $a, a'\in \A$.
Therefore  the map $U$ sending $\pi_1(a)v$ to $\pi_2(a)w$ extends to a well-defined linear  isometry
from   $[\pi_1[\mathcal A]v]$ onto $[\pi_2[\mathcal A]w]$.

By the irreducibility
 of $\pi_1$ and $\pi_2$ both $v$ and $w$
are cyclic vectors for $\mathcal H_1 $ and $\mathcal H_2$, 
respectively, i.e., $[\pi_1[\mathcal A]v]=\HH_1$ and $[\pi_2[\mathcal A]w]=\HH_2$.
 Therefore $U$ is a unitary from $\HH_1$ onto $\HH_2$.
 Also if  $u\in \mathcal H_1$ is
such that  there is $ b\in \mathcal A$ satisfying $\pi_1(b)v=u$ for every  $a\in \A$ we have
$$(U^{-1}\pi_2(a)U)(u)=(U^{-1}\pi_2(a)U)(\pi_1(b)v)=  U^{-1}\pi_2(a)\pi_2(b)w=$$
$$=U^{-1}\pi_2(ab)w=\pi_1(ab)v=\pi_1(a)u.$$
As the set of $u$s as above is dense in $\HH_1$ we conclude that  $\pi_1$ and $\pi_2$ are unitarily equivalent.
\end{proof}

\begin{definition} Let  $\mathcal A$ be a $C^*$-algebra and $(\pi_i, \mathcal H_i)_{i\in I}$
a maximal collection of pairwise unitarily inequivalent irreducible representations.
 Then $\pi:\mathcal A\rightarrow \prod_{i\in I}\mathcal B(\mathcal H_i)$
given by
$$\pi(a)|\mathcal H_i=\pi_i(a)$$
is called the reduced atomic representation of $\mathcal A$.
\end{definition}

\begin{lemma}\label{reduced-faithful} The reduced atomic representation of a $C^*$-algebra
is faithful.
\end{lemma}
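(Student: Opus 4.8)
*The reduced atomic representation of a $C^*$-algebra is faithful.*

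The plan is to show that the reduced atomic representation $\pi$ has trivial kernel, i.e., that $\pi(a)=0$ implies $a=0$. Since $\pi$ is assembled from a maximal family $(\pi_i,\HH_i)_{i\in I}$ of pairwise inequivalent irreducible representations, and $\pi(a)|\HH_i=\pi_i(a)$, the condition $\pi(a)=0$ is equivalent to $\pi_i(a)=0$ for every $i\in I$. So the content of the lemma is: if every irreducible representation of $\A$ annihilates $a$, then $a=0$. (Here I use that every irreducible representation is unitarily equivalent to one of the $\pi_i$, so demanding $\pi_i(a)=0$ for the chosen maximal family is the same as demanding it for all irreducible representations, since $\ker$ is preserved under unitary equivalence.)

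First I would reduce to a positive element. It suffices to prove $\pi(a^*a)=0 \Rightarrow a^*a=0$, because $\|\pi(a)\|^2=\|\pi(a^*a)\|$ and $\|a\|^2=\|a^*a\|$; so I may assume $a\geq 0$ and aim to show $\|a\|=0$. The standard route now is to invoke the existence, for any nonzero $a\geq 0$, of a \emph{pure state} $\varphi$ of $\A$ with $\varphi(a)=\|a\|\neq 0$: one extends the functional defined on the abelian $C^*$-subalgebra $C^*(a)\cong C_0(\sigma(a)\setminus\{0\})$ that evaluates at a point of the spectrum achieving the norm, using Hahn--Banach to a state of $\A$, and then takes an extreme point of the (weak$^*$-compact, convex) set of norm-preserving extensions, which is a pure state of $\A$ by the Krein--Milman theorem together with the characterization of pure states as extreme points of the state space. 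Applying the GNS construction to the pure state $\varphi$ yields an \emph{irreducible} representation $\pi_\varphi:\A\to\B(\HH_\varphi)$ with cyclic vector $\xi_\varphi$ satisfying $(\pi_\varphi(a)\xi_\varphi,\xi_\varphi)=\varphi(a)=\|a\|\neq 0$, so $\pi_\varphi(a)\neq 0$.

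This already gives a contradiction: I have produced an irreducible representation not annihilating $a$, whereas $\pi(a)=0$ forces every irreducible representation to annihilate $a$. Hence $a=0$ and $\pi$ is faithful. The one step that carries the real weight is the \emph{existence of a pure state with $\varphi(a)=\|a\|$} together with the fact that its GNS representation is irreducible; everything else is bookkeeping (the passage to $a^*a\geq 0$, unitary invariance of the kernel, and matching the maximal family to all irreducible representations). Both of these facts are standard GNS theory — the equivalence of purity of a state with irreducibility of its GNS representation, and the extension/extreme-point argument producing norm-attaining pure states — and are available from Chapter 5 of \cite{murphy}, already cited in the excerpt in connection with pure states. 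I would therefore phrase the argument so as to quote these two results cleanly rather than reprove them, keeping the proof short and in the same spirit as the surrounding lemmas.
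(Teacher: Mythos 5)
Your argument is correct and is essentially the paper's proof: both reduce the claim to producing, for a nonzero $a$, an irreducible representation that does not annihilate it, and then derive a contradiction with the maximality of the family $(\pi_i)_{i\in I}$ since kernels are preserved under unitary equivalence. The only difference is that the paper simply cites Theorem 5.1.12 of \cite{murphy} for the existence of such an irreducible representation, whereas you unpack its standard proof (norm-attaining pure state via Hahn--Banach and Krein--Milman, followed by GNS).
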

\begin{proof} Suppose that $(\pi_i, \mathcal H_i)_{i\in I}$
a maximal collection of pairwise unitarily inequivalent irreducible representations,
 but there is $a\in \mathcal A$ such that $\pi_i(a)=0$
for all $i\in I$ and $a\not=0$.  There is an irreducible representation $\sigma$ such that
$\sigma(a)\not=0$ by Theorem 5.1.12. of \cite{murphy}. Of course such a
representation can not be unitarily equivalent to any  $\pi_i$s, which contradicts
the maximality of $(\pi_i)_{i\in I}$.
\end{proof}

\begin{proposition}\label{only-atoms-to-compact}
Suppose that $\mathcal A$ is a $C^*$-algebra, $\mathcal I$
is its ideal
and $(\pi, \mathcal H)$ is any faithful
representation of  $\mathcal I$.  Then
$$\pi[\mathcal I]\cap \K(\mathcal H)\subseteq\pi[\I^{At}(\mathcal A)].$$
\end{proposition}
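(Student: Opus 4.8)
The plan is to take a nonzero compact operator $T$ in $\pi[\I]$ and show it lies in $\pi[\I^{At}(\A)]$. Since $\pi$ is faithful on $\I$, it suffices to exhibit $a \in \I^{At}(\A)$ with $\pi(a) = T$; because $\pi$ is injective on $\I$ and $T \in \pi[\I]$, there is a unique preimage $c \in \I$ with $\pi(c) = T$, and the real task is to show $c \in \I^{At}(\A)$, i.e.\ that $c$ lies in the closed $*$-subalgebra generated by minimal projections of $\A$. The natural route is spectral: a self-adjoint compact operator has a spectral decomposition $\sum_n \lambda_n P_n$ into finite-rank spectral projections onto eigenspaces for nonzero eigenvalues, the sum converging in norm. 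So first I would reduce to the self-adjoint case by writing $c = c_1 + i c_2$ with $c_1, c_2 \in \I$ self-adjoint (an ideal is $*$-closed), noting $\pi(c_j)$ is still compact since $\K(\HH)$ is an ideal of $\B(\HH)$.

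For self-adjoint $c \in \I$ with $\pi(c)$ compact, I would apply continuous functional calculus \emph{inside} $\I$: for each nonzero eigenvalue $\lambda$ of $T = \pi(c)$, choose a continuous function $f$ vanishing near $0$ that is $1$ on a neighborhood of $\lambda$ and $0$ near the other spectral values, so that $f(c) \in \I$ (here $f(0)=0$ is what keeps us in the nonunital ideal) and $\pi(f(c)) = f(T)$ is the spectral projection $P_\lambda$ onto the $\lambda$-eigenspace. The crucial claim is then that each such finite-rank projection $P_\lambda$ lies in $\pi[\I^{At}(\A)]$. Since $P_\lambda$ has finite rank, it is a finite sum of rank-one projections, so it is enough to handle the rank-one case: I would argue that a minimal projection of the compact operators pulls back, via the faithful representation, to a minimal projection of $\I$, and by Lemma \ref{minimal-in-ideal} a minimal projection of the ideal $\I$ is a minimal projection of $\A$, hence lies in $\I^{At}(\A)$ by definition.

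The main obstacle is the rank-one step: a finite-rank spectral projection $P_\lambda$ of $T$ need not itself be the image of a projection in $\I$ that is minimal, because $\pi[\I]$ may be strictly larger than $\K(\HH) \cap \pi[\I]$ and the rank-one subprojections of $P_\lambda$ inside $\B(\HH)$ need not come from projections in $\pi[\I]$. I would address this by working with the irreducible subrepresentations: decompose the representation $\pi$ restricted to the invariant subspace $P_\lambda \HH$, and use Lemma \ref{irreducible-subrepresentations} and Lemma \ref{preserving-minimal} to relate minimal projections of $\pi[\I]$ acting irreducibly to genuine rank-one projections of $\B(\HH)$. Concretely, a projection $q \in \I$ with $\pi(q)$ of minimal nonzero rank among images of projections in $\I$ below $P_\lambda$ should be minimal in $\I$, and then Lemma \ref{minimal-in-ideal} finishes the pullback. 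Iterating this to peel off rank-one pieces, together with the norm convergence of the spectral series $\sum_n \lambda_n \pi(f_n(c))$ and the fact that $\I^{At}(\A)$ is norm-closed, would assemble $c$ inside $\I^{At}(\A)$ and complete the argument.
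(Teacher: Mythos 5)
Your proposal is correct and follows essentially the same route as the paper's proof: spectral decomposition of a self-adjoint compact $T\in\pi[\I]$ into finite-rank spectral projections that lie in $\pi[\I]$ (via functional calculus killing $0$), decomposition of each such projection into finitely many minimal projections of $\I$, Lemma \ref{minimal-in-ideal} to promote these to minimal projections of $\A$, and norm-closedness of $\I^{At}(\A)$ to assemble the sum. The only comment worth making is that your detour through Lemmas \ref{irreducible-subrepresentations} and \ref{preserving-minimal} is unnecessary (and slightly off, since $P_\lambda\mathcal H$ is not $\pi[\I]$-invariant and the minimal projections of $\I$ need not have rank one): the paper compresses your ``minimal nonzero rank'' argument into the single observation that the corner $P_\lambda\,\pi[\I]\,P_\lambda$ is a finite-dimensional $C^*$-algebra, hence generated by its minimal projections, which are automatically minimal in $\pi[\I]$ --- exactly the fact your concrete last step establishes by hand.
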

\begin{proof}
Let $T\in \pi[\mathcal I]\cap \mathcal K(\mathcal H)$ be self-adjoint. Then by the
spectral theorem $T=\Sigma_{n\in \N}\lambda_n P_n$ where $P_n$s are finite dimensional projections.
Moreover, spectral theorem implies that $P_n\in \pi[\mathcal I]$. 
Each finite dimensional $C^*$-algebra is generated by its minimal projections (by the
representation of finite dimensional $C^*$-algebras as finite products of matrix algebras).
This implies that
$T$ is in $\I^{At}(\pi[\mathcal I])=\pi[\I^{At}(\mathcal I)]$, the last equality follows from the faithfulness of
$\pi$. However, $\I^{At}(\mathcal I)\subseteq \I^{At}(\mathcal A)$, by
Lemma \ref{minimal-in-ideal} and therefore $\pi[\mathcal I]\cap \K(\mathcal H)\subseteq\pi[\I^{At}(\mathcal A)]$.
\end{proof}

\begin{proposition}\label{atoms-in-reduced-atomic} Suppose that $(\pi, \mathcal H)$ is the reduced atomic
representation of a $C^*$-algebra $\mathcal A$.  Then
$$\pi[\mathcal A]\cap \K(\mathcal H)=\pi[\I^{At}(\mathcal A)].$$
\end{proposition}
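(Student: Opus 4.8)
The plan is to prove the two inclusions separately; the inclusion $\subseteq$ is essentially free, and all the work is in $\supseteq$.

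For $\pi[\A]\cap\K(\HH)\subseteq\pi[\I^{At}(\A)]$, I would simply invoke Proposition \ref{only-atoms-to-compact} with the ideal $\I=\A$ (every algebra is an ideal of itself) and with the representation $(\pi,\HH)$, which is faithful by Lemma \ref{reduced-faithful}. This delivers the inclusion at once, so no further argument is needed here. For the reverse inclusion, since $\I^{At}(\A)\subseteq\A$ trivially gives $\pi[\I^{At}(\A)]\subseteq\pi[\A]$, it is enough to show $\pi[\I^{At}(\A)]\subseteq\K(\HH)$. Because $\pi$ is a $*$-homomorphism, $\pi[\I^{At}(\A)]$ is the $C^*$-subalgebra of $\pi[\A]$ generated by $\{\pi(p):p\in At(\A)\}$, and $\K(\HH)$ is a norm-closed $*$-subalgebra of $\B(\HH)$; hence it suffices to verify that $\pi(p)\in\K(\HH)$ for a single minimal projection $p$, and then close up under sums, products, adjoints and norm limits.

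To see that $\pi(p)$ is compact, write $\pi=\bigoplus_{i\in I}\pi_i$ over the maximal family of pairwise unitarily inequivalent irreducibles, so that $\pi(p)$ is the block-diagonal operator with blocks $\pi_i(p)$ on $\HH=\bigoplus_{i\in I}\HH_i$. The decisive point is Lemma \ref{minimal-only-one}: if $\pi_i(p)$ and $\pi_j(p)$ were both nonzero, then $\pi_i$ and $\pi_j$ would be unitarily equivalent, forcing $i=j$. Thus at most one block $\pi_{i_0}(p)$ is nonzero, so $\pi(p)$ is concentrated on the single summand $\HH_{i_0}$. On that summand, applying $\pi_{i_0}$ to the relation $pap=\lambda_p(a)p$ shows that $\pi_{i_0}(p)$ is a minimal projection of the irreducible algebra $\pi_{i_0}[\A]\subseteq\B(\HH_{i_0})$; by Lemma \ref{preserving-minimal} it is then a minimal projection of $\B(\HH_{i_0})$, hence a rank-one projection by Lemma \ref{minimal-one-dimensional}. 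A rank-one operator supported on one summand is compact in $\B(\HH)$, so $\pi(p)\in\K(\HH)$, completing the argument.

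I expect the main obstacle to be precisely this ``single block'' phenomenon: a priori $\pi(p)$ could be an infinite direct sum of rank-one projections and hence fail to be compact, and it is exactly Lemma \ref{minimal-only-one} (that a minimal projection survives in at most one inequivalent irreducible) that rules this out. The only other point requiring a little care is the passage from $\pi(p)\in\K(\HH)$ for the generators to $\pi[\I^{At}(\A)]\subseteq\K(\HH)$, which rests on the fact that the image of a $C^*$-algebra under a $*$-homomorphism is again a $C^*$-algebra, together with the norm-closedness of $\K(\HH)$.
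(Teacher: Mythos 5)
Your proposal is correct and follows essentially the same route as the paper: the inclusion $\pi[\mathcal A]\cap \K(\mathcal H)\subseteq\pi[\I^{At}(\mathcal A)]$ via Proposition \ref{only-atoms-to-compact} with $\mathcal I=\mathcal A$, and the reverse via Lemma \ref{minimal-only-one} to localize $\pi(p)$ to a single summand together with Lemmas \ref{preserving-minimal} and \ref{minimal-one-dimensional} to see it is rank one. Your identification of the ``single block'' phenomenon as the decisive point is exactly right.
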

\begin{proof}
Let $(\pi_i, \mathcal H_i)_{i\in I}$ be the maximal pairwise inequivalent irreducible representations of
$\mathcal A$ which give rise to $\pi$. By Lemmas \ref{minimal-only-one}
for every minimal projection $p\in \mathcal A$,
$\pi(p)$ is $0$ on all but one $\mathcal H_i$. By Lemma 
\ref{preserving-minimal} and Lemma \ref{minimal-one-dimensional}  $\pi(p)$ is a one-dimensional
projection on this  $\mathcal H_i$. It follows that $\pi[\I^{At}(\mathcal A)]\subseteq \K(\mathcal H)$.

The other inclusion follows from Proposition \ref{only-atoms-to-compact} applied for
$\mathcal I=\mathcal A$.
\end{proof}

Recall that $CCR(\A)$ is the ideal of all elements of a $C^*$-algebra $\A$ which
are sent onto a compact operator by each irreducible representation of $\A$ (see Section 1.5 of 
\cite{invitation}).

\begin{proposition}\label{ccr} Suppose that $\A$ is a $C^*$-algebra. Then $\I^{At}(\A)\subseteq
CCR(\A)$. 
\end{proposition}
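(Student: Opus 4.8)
The plan is to reduce the statement to a claim about a single minimal projection. Since $CCR(\A)$ is by definition an ideal of $\A$, it is in particular a norm-closed, self-adjoint subalgebra; and $\I^{At}(\A)$ is, by Definition \ref{definition-minimal}, the norm-closed $*$-subalgebra generated by the minimal projections of $\A$. Hence it suffices to prove that every minimal projection $p\in At(\A)$ lies in $CCR(\A)$, i.e.\ that $\pi(p)$ is a compact operator for every irreducible representation $\pi:\A\to\B(\HH)$.

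First I would fix a minimal projection $p$ and an irreducible representation $(\pi,\HH)$. If $\pi(p)=0$ there is nothing to prove, so assume $\pi(p)\neq 0$. The key observation is that $\pi$ carries the minimality of $p$ to its image: from $p\A p=\C p$ we obtain $\pi(p)\,\pi[\A]\,\pi(p)=\pi(p\A p)=\C\,\pi(p)$, so that $\pi(p)$ is a minimal projection of the $*$-subalgebra $\pi[\A]\subseteq\B(\HH)$.

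Next, since $\pi$ is irreducible, $\pi[\A]$ is an irreducible $*$-subalgebra of $\B(\HH)$, so Lemma \ref{preserving-minimal} applies with $\pi[\A]$ in place of $\A$ and shows that $\pi(p)$ is in fact a minimal projection of $\B(\HH)$. By Lemma \ref{minimal-one-dimensional} this means $\pi(p)$ is the orthogonal projection onto a one-dimensional subspace of $\HH$, hence a finite-rank, and therefore compact, operator. Thus $\pi(p)\in\K(\HH)$ for every irreducible $\pi$, so $p\in CCR(\A)$, and the reduction from the first paragraph yields $\I^{At}(\A)\subseteq CCR(\A)$.

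There is essentially no serious obstacle here; the only point requiring some care is the bookkeeping that $CCR(\A)$ is genuinely closed under the $*$-algebra operations and under norm limits, so that, containing all the minimal projections, it contains the whole subalgebra $\I^{At}(\A)$ they generate. One could also note that Lemma \ref{minimal-only-one} already guarantees that for a fixed $p$ at most one unitary-equivalence class of irreducible representations sends $p$ to a nonzero operator, but this refinement is not needed for the present inclusion.
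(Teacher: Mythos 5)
Your proof is correct and follows essentially the same route as the paper's: both reduce to a single minimal projection and invoke Lemma \ref{preserving-minimal} together with Lemma \ref{minimal-one-dimensional} to see that $\pi(p)$ is a rank-one (hence compact) projection for every irreducible $\pi$. Your version merely makes explicit two points the paper leaves implicit, namely that $\pi(p)$ is a minimal projection of the irreducible subalgebra $\pi[\A]$ when nonzero, and that $CCR(\A)$, being a closed ideal, absorbs the whole $*$-subalgebra generated by the minimal projections.
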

\begin{proof} 
Suppose that $\pi:\A\rightarrow \B(\HH)$ is an irreducible representation of $\A$ on some Hilbert space
$\HH$. If $p\in \A$ is a minimal projection in $\A$, then by Lemma 
\ref{preserving-minimal} and Lemma \ref{minimal-one-dimensional}  $\pi(p)$ is a one-dimensional
projection on this  $\mathcal H$, i.e., $\pi(p)\in \K(\HH)$. As 
the minimal projections  of $\A$ generate $\I^{At}(\A)$ we conclude that $\pi[\I^{At}(\A)]\subseteq \K(\HH)$
as required.
\end{proof}

\begin{proposition}\label{atoms-are-compact}
Suppose $\mathcal A$ is a $C^*$-algebra.
Then $\I^{At}(\mathcal A)$ is isomorphic to a subalgebra of $\mathcal K(\mathcal H)$
 for some Hilbert space $\mathcal H$.
\end{proposition}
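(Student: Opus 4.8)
The plan is to deduce this immediately from the two facts just established about the reduced atomic representation, which were arranged precisely to make this corollary fall out. So I would begin by letting $(\pi,\HH)$ denote the reduced atomic representation of $\A$, i.e.\ the representation assembled from a maximal family of pairwise unitarily inequivalent irreducible representations $(\pi_i,\HH_i)_{i\in I}$. Everything needed is then on the table: Lemma \ref{reduced-faithful} tells me $\pi$ is faithful, and Proposition \ref{atoms-in-reduced-atomic} identifies the image of the atomic ideal as the compact part of the image of $\A$.

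Next I would observe that, being an injective $*$-homomorphism between $C^*$-algebras, $\pi$ is automatically isometric, and hence its restriction to the subalgebra $\I^{At}(\A)$ is a $*$-isomorphism onto its image $\pi[\I^{At}(\A)]$. (This is the only genuinely external fact invoked, namely that an injective $*$-homomorphism of $C^*$-algebras is isometric and thus an isomorphism onto its range; it requires no work here.) Combining this with Proposition \ref{atoms-in-reduced-atomic}, which gives
$$\pi[\I^{At}(\A)]=\pi[\A]\cap\K(\HH),$$
I conclude that $\I^{At}(\A)\cong\pi[\I^{At}(\A)]=\pi[\A]\cap\K(\HH)\subseteq\K(\HH)$, which is exactly the assertion that $\I^{At}(\A)$ is $*$-isomorphic to a subalgebra of $\K(\HH)$.

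I do not expect any real obstacle: the substantive content was already extracted in Lemmas \ref{minimal-only-one}, \ref{preserving-minimal} and Proposition \ref{atoms-in-reduced-atomic}, where the minimal projections of $\A$ were shown to map to rank-one projections on a single summand $\HH_i$, forcing $\pi[\I^{At}(\A)]$ into $\K(\HH)$. The present statement is therefore a clean packaging step, and the whole argument is a two-line invocation of \ref{reduced-faithful} and \ref{atoms-in-reduced-atomic}. The only point worth stating explicitly, to keep the argument self-contained for readers outside $C^*$-theory, is the faithful-implies-isometric remark that turns the set-theoretic identity of images into a genuine $*$-isomorphism of algebras.
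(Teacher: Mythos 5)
Your proposal is correct and is exactly the paper's argument: the paper's proof is the one-line ``Apply Proposition \ref{atoms-in-reduced-atomic} and Lemma \ref{reduced-faithful},'' and you have simply spelled out the same two invocations together with the standard fact that an injective $*$-homomorphism is an isomorphism onto its range. Nothing further is needed.
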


\begin{proof}
Apply Proposition \ref{atoms-in-reduced-atomic} and Lemma \ref{reduced-faithful}.
\end{proof}

\begin{proposition}\label{atoms-are-max-ideal} Suppose $\mathcal A$ is a $C^*$-algebra.
Then $\I^{At}(\mathcal A)$ is an ideal of $\mathcal A$.
It contains all ideals isomorphic to a subalgebra of compact operators on a Hilbert space.
Therefore there is no bigger ideal which can be faithfully mapped into an
algebra of compact operators on a Hilbert space.
\end{proposition}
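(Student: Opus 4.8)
The plan is to deduce all three assertions from the structural results already established, principally Propositions \ref{atoms-are-compact}, \ref{atoms-in-reduced-atomic} and \ref{only-atoms-to-compact}, so that little genuinely new work is required beyond careful bookkeeping.

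For the first assertion, I would fix the reduced atomic representation $(\pi,\HH)$ of $\A$, which is faithful by Lemma \ref{reduced-faithful} and hence an isometric $*$-isomorphism of $\A$ onto $\pi[\A]$. By Proposition \ref{atoms-in-reduced-atomic} we have $\pi[\I^{At}(\A)]=\pi[\A]\cap\K(\HH)$. Since $\K(\HH)$ is a closed two-sided ideal of $\B(\HH)$, its intersection with the $C^*$-subalgebra $\pi[\A]$ is a closed two-sided ideal of $\pi[\A]$: for $a\in\A$ and $x\in\I^{At}(\A)$ one has $\pi(a)\pi(x)\in\pi[\A]$ and $\pi(a)\pi(x)\in\K(\HH)$, and symmetrically on the right. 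Transporting this ideal back through the isomorphism $\pi$ shows that $\I^{At}(\A)$ is a closed two-sided ideal of $\A$.

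For the second assertion, let $\I\subseteq\A$ be an ideal together with a $*$-isomorphism $\rho$ of $\I$ onto a subalgebra of $\K(\HH')$ for some Hilbert space $\HH'$. Viewing $\rho$ as a faithful representation of $\I$ whose range lies in $\K(\HH')$, one has $\rho[\I]\cap\K(\HH')=\rho[\I]$, so Proposition \ref{only-atoms-to-compact} (applied to the ideal $\I$ with the representation $\rho$, and inspecting its proof, which produces elements of $\I^{At}(\I)$) yields $\I=\I^{At}(\I)$. Concretely, every self-adjoint $T\in\rho[\I]$ is compact and hence, by the spectral theorem, a norm limit of finite linear combinations of finite-dimensional spectral projections lying in $\rho[\I]$, each of which is built from minimal projections of $\I$; passing to general elements via $x=\tfrac{1}{2}(x+x^*)+i\tfrac{1}{2i}(x-x^*)$ gives $\I\subseteq\I^{At}(\I)$, and the reverse inclusion is trivial. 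It then remains to invoke Lemma \ref{minimal-in-ideal}, by which every minimal projection of $\I$ is a minimal projection of $\A$, so that the $*$-subalgebra $\I^{At}(\A)$ contains the $*$-subalgebra $\I^{At}(\I)$ generated by those projections; therefore $\I=\I^{At}(\I)\subseteq\I^{At}(\A)$.

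The third assertion is then immediate: by Proposition \ref{atoms-are-compact} the ideal $\I^{At}(\A)$ is itself isomorphic to a subalgebra of compact operators, and by the second assertion it contains every ideal with this property, so any ideal $\J$ that can be faithfully mapped into some $\K(\HH'')$ satisfies $\J\subseteq\I^{At}(\A)$ and hence cannot properly contain it. I expect the only delicate point to be the bookkeeping in the second assertion: one must keep $\I^{At}(\I)$ and $\I^{At}(\A)$ distinct and rely on Lemma \ref{minimal-in-ideal} to pass between them, while using that $\I^{At}(\A)$, being a $*$-subalgebra, automatically absorbs the $*$-subalgebra generated by the minimal projections of $\I$.
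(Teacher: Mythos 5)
Your proposal is correct and follows essentially the same route as the paper: the ideal property is obtained from the identity $\pi[\I^{At}(\A)]=\pi[\A]\cap\K(\HH)$ in the reduced atomic representation (Proposition \ref{atoms-in-reduced-atomic} plus faithfulness), and the absorption of ideals embeddable into compacts is obtained by showing such an ideal is generated by its minimal projections (the spectral-theorem argument you spell out is exactly the one behind the paper's ``by inspection'' remark and Proposition \ref{only-atoms-to-compact}) and then transferring those projections to $\A$ via Lemma \ref{minimal-in-ideal}. The only difference is that you make explicit the bookkeeping between $\I^{At}(\I)$ and $\I^{At}(\A)$ and the derivation of the final ``no bigger ideal'' clause, which the paper leaves implicit.
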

\begin{proof}

Let $\pi$ be the reduced atomic representation of $\A$ on $\HH$. By Proposition \ref{atoms-in-reduced-atomic}
we have
$\pi[\mathcal A]\cap \K(\mathcal H) = \pi[\I^{At}(\mathcal A)].$
Since $\K(\mathcal H)$ is an ideal of $\B(\mathcal H)$,  $\pi[\I^{At}(\mathcal A)]$
is an ideal of $\pi[\mathcal A]$. Therefore  $\I^{At}(\mathcal A)$ is an ideal
of $\mathcal A$. 
Any ideal $\I$ isomorphic to a subalgebra of $\K(\mathcal H)$ is generated as an algebra by its minimal
projections (by inspection in subalgebras of $\K(\mathcal H)$).  
The minimal projections in $\mathcal I$ are minimal projections in $\mathcal A$ (Lemma \ref{minimal-in-ideal}). Thus
$\mathcal I\subseteq \I^{At}(\mathcal A)$.
\end{proof}

\begin{proposition}\label{intersection-of-atomic} Suppose
that $\mathcal A$ is a $C^*$-algebra and $\mathcal B$ is a subalgebra
of $\mathcal A$. Then $\I^{At}(\mathcal A)\cap \mathcal B\subseteq \I^{At}(\mathcal B)$. If $\mathcal J$ is an ideal 
of $\mathcal A$, then $\I^{At}(\mathcal J) = \I^{At}(\mathcal A) \cap \mathcal J$.
\end{proposition}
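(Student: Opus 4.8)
The plan is to prove the first inclusion for an arbitrary subalgebra $\mathcal B$ and then obtain the equality for ideals as a short consequence, combined with Lemma \ref{minimal-in-ideal}. The whole argument is really a matter of recognizing the right structure and then quoting the maximality clause of Proposition \ref{atoms-are-max-ideal}.

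First I would observe that $\I^{At}(\mathcal A)\cap\mathcal B$ is not merely a subalgebra of $\mathcal B$ but in fact a closed two-sided $*$-ideal of $\mathcal B$. Indeed, $\I^{At}(\mathcal A)$ is a closed ideal of $\mathcal A$ by Proposition \ref{atoms-are-max-ideal}, so for $b\in\mathcal B$ and $x\in\I^{At}(\mathcal A)\cap\mathcal B$ both products $bx$ and $xb$ lie in $\I^{At}(\mathcal A)$ (since $b\in\mathcal A$) and in $\mathcal B$ (since $\mathcal B$ is a subalgebra), hence in the intersection; closedness and being $*$-closed are inherited from the two factors. Next I would note that $\I^{At}(\mathcal A)\cap\mathcal B$, being a $C^*$-subalgebra of $\I^{At}(\mathcal A)$, is isomorphic to a subalgebra of $\mathcal K(\mathcal H)$ for some Hilbert space $\mathcal H$: this is immediate from Proposition \ref{atoms-are-compact}, as the restriction of the embedding $\I^{At}(\mathcal A)\hookrightarrow\mathcal K(\mathcal H)$ to the subalgebra is again such an embedding. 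Having identified $\I^{At}(\mathcal A)\cap\mathcal B$ as an ideal of $\mathcal B$ that embeds into the compacts, I would finish the first claim by invoking the maximality clause of Proposition \ref{atoms-are-max-ideal} applied to $\mathcal B$, which says that $\I^{At}(\mathcal B)$ contains every ideal of $\mathcal B$ isomorphic to a subalgebra of compact operators; hence $\I^{At}(\mathcal A)\cap\mathcal B\subseteq\I^{At}(\mathcal B)$.

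For the ideal case, one inclusion is the special case $\mathcal B=\mathcal J$ of what was just proved, giving $\I^{At}(\mathcal A)\cap\mathcal J\subseteq\I^{At}(\mathcal J)$. For the reverse inclusion I would use Lemma \ref{minimal-in-ideal}: every minimal projection of the ideal $\mathcal J$ is a minimal projection of $\mathcal A$, so the $*$-algebra $\I^{At}(\mathcal J)$ generated by the former is contained in $\I^{At}(\mathcal A)$; and trivially $\I^{At}(\mathcal J)\subseteq\mathcal J$ since its generators lie in $\mathcal J$. Together these give $\I^{At}(\mathcal J)\subseteq\I^{At}(\mathcal A)\cap\mathcal J$, completing the equality.

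As for obstacles, the argument is genuinely short once the correct viewpoint is adopted; the only point requiring care is the very first one, namely recognizing that $\I^{At}(\mathcal A)\cap\mathcal B$ is a genuine \emph{ideal} of $\mathcal B$ rather than merely a subalgebra, since it is precisely this that permits the maximality property of $\I^{At}(\mathcal B)$ to be applied. Everything else is direct bookkeeping of previously established facts.
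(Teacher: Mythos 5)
Your proof is correct and follows essentially the same route as the paper: identify $\I^{At}(\mathcal A)\cap\mathcal B$ as an ideal of $\mathcal B$ that embeds into the compacts via Propositions \ref{atoms-are-max-ideal} and \ref{atoms-are-compact}, invoke the maximality clause of Proposition \ref{atoms-are-max-ideal} for $\mathcal B$, and derive the ideal case from the first inclusion together with Lemma \ref{minimal-in-ideal}. The only difference is that you spell out the routine verifications (that the intersection is an ideal, and the two inclusions in the ideal case) which the paper leaves implicit.
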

\begin{proof}
By  Proposition \ref{atoms-are-max-ideal},
 $\I^{At}(\mathcal A)\cap \mathcal B$ is an ideal of $\mathcal B$.
Since $\I^{At}(\mathcal A)$ can be mapped faithfully into an
algebra of compact operators (Proposition \ref{atoms-are-compact}), so can
$\I^{At}(\mathcal A)\cap \mathcal B$. However by Proposition \ref{atoms-are-max-ideal}
any such ideal of $\mathcal B$ is included in $\I^{At}(\mathcal B)$. The second statement immediately  follows from 
the first statement and Lemma \ref{minimal-in-ideal}.
\end{proof}

\subsection{Atoms and essential ideals}

Recall that an ideal $\mathcal I$ in a $C^*$-algebra $\mathcal A$
is called essential if and only if $\mathcal I\cap\mathcal J\not=\{0\}$
for any  nonzero ideal of $\mathcal A$. Also $\mathcal I$  is essential if and only if 
$\mathcal I^\perp=\{a\in \mathcal A: a\I=0\}$
is the zero ideal (see II.5.4.7 of \cite{Blackadar}).

\begin{lemma}\label{ideal-in-ideal} Suppose that $\A$ is a $C^*$-algebra, $\I\subseteq \A$ is its essential ideal
and $\J\subseteq \I$ is an essential ideal of $\I$. Then $\J$ is an essential ideal of $\A$.
\end{lemma}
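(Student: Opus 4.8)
The plan is to establish two separate things: first that $\J$ is actually a closed two-sided ideal of $\A$, and only then that it is essential. The first point is not a purely algebraic triviality — an ideal of an ideal need not be an ideal in an arbitrary ring — so I would bring in the $C^*$-structure through an approximate unit. Let $(e_\lambda)_\lambda$ be an approximate unit for the $C^*$-algebra $\J$. For $a\in\A$ and $j\in\J$ I would write $aj=\lim_\lambda (ae_\lambda)j$. Since $e_\lambda\in\J\subseteq\I$ and $\I$ is an ideal of $\A$, we have $ae_\lambda\in\I$; since $\J$ is an ideal of $\I$ and $j\in\J$, this gives $(ae_\lambda)j\in\J$; and as $\J$ is norm-closed, $aj\in\J$. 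The symmetric computation $ja=\lim_\lambda j(e_\lambda a)$ yields $ja\in\J$. Hence $\J$ is a closed two-sided ideal of $\A$.

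For essentiality I would use the annihilator characterization recalled just before the statement: it suffices to show $\J^\perp=\{a\in\A: a\J=0\}=\{0\}$. So fix $a\in\A$ with $a\J=0$ and take an arbitrary $b\in\I$. For every $j\in\J$ we have $bj\in\J$ (because $\J$ is an ideal of $\I$), hence $(ab)j=a(bj)=0$; thus $ab$ annihilates $\J$. At the same time $ab\in\I$, since $\I$ is an ideal of $\A$. Therefore $ab$ lies in the annihilator of $\J$ computed inside $\I$, and that annihilator is $\{0\}$ precisely because $\J$ is essential in $\I$. Consequently $ab=0$ for every $b\in\I$, i.e. $a\I=0$, which means $a\in\I^\perp$. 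Since $\I$ is essential in $\A$ we have $\I^\perp=\{0\}$, forcing $a=0$. This shows $\J^\perp=\{0\}$ and finishes the argument.

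The only genuinely $C^*$-algebraic input is the approximate-unit computation certifying that $\J$ is an ideal of $\A$; the essentiality itself is a short, purely formal manipulation of annihilators, with the two hypotheses used exactly once each (essentiality of $\J$ in $\I$ to kill $ab$, essentiality of $\I$ in $\A$ to kill $a$). I therefore expect the ideal-of-an-ideal step to be the main — and essentially the only — obstacle, reflecting the general heuristic that essentiality, like density, propagates transitively along the chain $\J\subseteq\I\subseteq\A$.
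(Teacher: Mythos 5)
Your proof is correct and follows essentially the same route as the paper's: the ideal property of $\J$ in $\A$ is obtained from an approximate unit of $\J$ exactly as in the paper, and your essentiality argument is just the contrapositive rearrangement of the paper's (the paper picks $i\in\I$ with $ia\neq 0$ and then $j\in\J$ with $iaj\neq 0$, while you show $a\I=0$ by first killing $ab$ for each $b\in\I$ via the annihilator of $\J$ in $\I$). No gaps.
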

\begin{proof} To see that $\J$ is an ideal of $\A$, take $j\in \mathcal J$ and $a\in \A$.
Let $(j_\xi)_{\xi\in \Xi}$ be an approximative unit for $\J$, in particular $\|jj_\xi -j\|$
converges to $0$. We have $aj\in \I$ as $\I$ is an ideal of $\A$ and since
$(ajj_\xi)_{\xi\in \Xi}$ converges to $aj$ and each $ajj_\xi$ is in $\J$, it follows that 
$aj\in \J$ as required. The proof for
$ja$ is analogous.

For the essentiality of $\J$ suppose that there is $a\in \A$ such that $a\J=0$.
However, the essentiality of $\I$ in $\A$ implies that there is $i\in \I$ such that
$ia\not=0$. Now the essentiality of $\J$ in $\I$ yields $j\in \J$ such that
$iaj\not=0$. It follows that $aj\not=0$, which is a contradiction.
\end{proof}

\begin{proposition}\label{embedding-scattered} Suppose that $\mathcal A$ is a $C^*$-algebra 
and $\mathcal I\subseteq \mathcal A$ is its essential ideal such that there is an 
embedding
 $i: \mathcal I\rightarrow \mathcal K(\mathcal H)$ for some Hilbert space $\mathcal H$.
Then there is an extension of this embedding to an
 embedding $\tilde{i}: \mathcal A\rightarrow \mathcal B(\mathcal H)$.
\end{proposition}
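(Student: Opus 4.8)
The plan is to realize $\tilde\imath$ as the (essentially unique) extension of $i$ to the multiplier level, carried out by hand on a Hilbert space. First I would identify $\I$ with its image $i[\I]\subseteq\K(\HH)$ and pass to the essential subspace $\HH_0=[\I\HH]$. A one-line computation shows that every $T\in\I$ already vanishes on $\HH_0^\perp$: for $\xi\perp\HH_0$ and any $S\in\I$, $\eta\in\HH$ we have $(T\xi,S\eta)=(\xi,T^*S\eta)=0$ since $T^*S\eta\in\HH_0$, while $T\xi\in\HH_0$, forcing $T\xi=0$. Hence $\I$ acts nondegenerately on $\HH_0$ and the representation $T\mapsto T|\HH_0$ is faithful; this is the space on which I will build the extension.

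Next, for $a\in\A$ I would define an operator on the dense subspace $\mathcal D\subseteq\HH_0$ linearly spanned by the vectors $i(T)\xi$ ($T\in\I$, $\xi\in\HH$) by
\[
\tilde\imath(a)\Big(\sum_k i(T_k)\xi_k\Big)=\sum_k i(aT_k)\xi_k,
\]
which makes sense because $\I$ is an ideal, so $aT_k\in\I$. The heart of the argument, and the step I expect to be the main obstacle, is to prove that this prescription is well defined and bounded with $\|\tilde\imath(a)\|\le\|a\|$. For this I would work in the unitization $\widetilde{\A}$ (of which $\I$ is still an ideal), write the positive element $\|a\|^2 1-a^*a=b^2$ with $b=b^*\in\widetilde{\A}$, and expand the Gram sum
\[
\Big\|\sum_k i(aT_k)\xi_k\Big\|^2=\sum_{j,k}\big(i(T_j^*a^*aT_k)\xi_k,\xi_j\big).
\]
Substituting $a^*a=\|a\|^2 1-b^2$ and using $i(T_j^*b^2T_k)=i(bT_j)^*i(bT_k)$ turns the correction term into $\big\|\sum_k i(bT_k)\xi_k\big\|^2\ge 0$, which yields $\big\|\sum_k i(aT_k)\xi_k\big\|^2\le\|a\|^2\big\|\sum_k i(T_k)\xi_k\big\|^2$. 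In particular the prescription sends $0$ to $0$, so it is well defined, and it extends uniquely to a bounded operator on $\HH_0$ of norm at most $\|a\|$.

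Finally I would extend each $\tilde\imath(a)$ by zero on $\HH_0^\perp$ to obtain $\tilde\imath(a)\in\B(\HH)$, and check the remaining (routine) points on the dense subspace: multiplicativity from $i(aT)=i(a)i(T)$-type identities, and $\tilde\imath(a)^*=\tilde\imath(a^*)$ from $(i(aT)\xi,i(S)\eta)=(i(S^*aT)\xi,\eta)=(i(T)\xi,i(a^*S)\eta)$, so that $\tilde\imath$ is a $*$-homomorphism. For $a\in\I$ the definition gives $\tilde\imath(a)i(T)\xi=i(aT)\xi=i(a)i(T)\xi$, so $\tilde\imath$ agrees with $i$ on $\HH_0$ and, as both sides vanish on $\HH_0^\perp$, on all of $\HH$; thus $\tilde\imath$ extends $i$. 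Injectivity is where essentiality enters: if $\tilde\imath(a)=0$ then $i(aT)=0$, hence $aT=0$ for all $T\in\I$, i.e. $a\in\I^\perp=\{0\}$, so $a=0$. Since an injective $*$-homomorphism of $C^*$-algebras is isometric, $\tilde\imath\colon\A\to\B(\HH)$ is the required embedding extending $i$.
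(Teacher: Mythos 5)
Your proof is correct, but it takes a more hands-on route than the paper. The paper simply invokes the universal property of the multiplier algebra $\mathcal M(\mathcal I)$ to get a homomorphism $\tilde{i}\colon \mathcal A\to\mathcal M(\mathcal I)$ extending the identity on $\mathcal I$, identifies $\mathcal M(\mathcal I)$ with the idealizer of $i[\mathcal I]$ inside $\mathcal B(\mathcal H)$ (citing Blackadar II.7.3.5), and then uses $\mathcal I^\perp=\{0\}$ for injectivity exactly as you do. What you have done is open up that black box: your operator $\sum_k i(T_k)\xi_k\mapsto\sum_k i(aT_k)\xi_k$ on $\mathcal H_0=[\mathcal I\mathcal H]$ is precisely the canonical image of $a$ in the multiplier algebra in its realization on the essential subspace, and your Gram-sum estimate with $\|a\|^2 1-a^*a=b^2$ in $\widetilde{\mathcal A}$ is the standard proof that this map is well defined and contractive. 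The trade-off: the paper's argument is two lines modulo two nontrivial cited facts about multiplier algebras, while yours is self-contained and in particular deals explicitly with the possible degeneracy of $i$ (restricting to $\mathcal H_0$, checking that $\mathcal I$ vanishes on $\mathcal H_0^\perp$, and extending by zero), a point the paper's appeal to the idealizer picture quietly assumes away. All the individual steps you flag check out: $bT_k\in\mathcal I$ because $\mathcal I$ remains an ideal of $\widetilde{\mathcal A}$, the adjoint and multiplicativity identities hold on the dense subspace, and essentiality gives injectivity, hence isometry. No gaps.
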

\begin{proof}
Let $\mathcal M(\mathcal I)$ be the multiplier algebra of $\mathcal I$,
that is, a unital $C^*$-algebra containing $\mathcal I$ as an essential ideal 
such that it is universal in the sense that
 for any $C^*$-algebra $\mathcal B$ containing $\mathcal I$ as an ideal
there is a unique homomorphism $h: \mathcal B\rightarrow \mathcal M(\mathcal I)$ extending
the identity map on $\mathcal I$ (see \cite[I.7.3.1]{Blackadar}). 
Since $i$ is a faithful representation of 
$\I$, we can identify $\mathcal  M(\I)$ as the idealizer of the image of $\I$ inside $\mathcal B(\HH)$
 (see \cite[II.7.3.5]{Blackadar}).
Let $\tilde{i}: \mathcal A\rightarrow \mathcal M(\I)\subseteq \mathcal B(\HH)$ be the homomorphism 
obtained from
the universality of the multiplier algebra, which extends $i$.   We have that
$\mathcal I^\perp=\{0\}$. Thus for every nonzero $a\in \mathcal A$
there is $b\in \mathcal I$ such that $ab\not=0$.
However $ab$ belongs to $\mathcal I$ since it is an ideal,
so $\tilde i (ab)= i (ab)\not=0$  which means that $\tilde i (a)\not=0$.
Therefore  $\tilde i$ is an embedding into $\mathcal B(\mathcal H)$.
\end{proof}

\begin{proposition}\label{essential-maximal} Suppose that $\mathcal A$ a  $C^*$-algebra
and $\mathcal I\subseteq \I^{At}(\mathcal A)$ is its essential ideal. Then $\mathcal I=\I^{At}(\mathcal A)$.
\end{proposition}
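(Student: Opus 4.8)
The plan is to reduce the claimed equality to a single statement about minimal projections: namely, that the essential ideal $\mathcal I$ contains every minimal projection of $\mathcal A$. Indeed, $\I^{At}(\mathcal A)$ is by Definition \ref{definition-minimal} generated by $At(\mathcal A)$, so once I know $At(\mathcal A)\subseteq\mathcal I$ I can conclude $\I^{At}(\mathcal A)\subseteq\mathcal I$, since $\mathcal I$ is a closed two-sided $*$-ideal and hence contains the ($*$-subalgebra generated by, and the closure of) the minimal projections. Combined with the hypothesis $\mathcal I\subseteq\I^{At}(\mathcal A)$, this gives $\mathcal I=\I^{At}(\mathcal A)$. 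Thus the entire argument concentrates on showing that one arbitrary minimal projection lies in $\mathcal I$.

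So I would fix a minimal projection $p\in\mathcal A$, so that $p\mathcal A p=\C p$. The essentiality of $\mathcal I$ enters through the characterization $\mathcal I^\perp=\{a\in\mathcal A: a\mathcal I=0\}=\{0\}$ recalled before Lemma \ref{ideal-in-ideal}: since $p\neq 0$ and $\mathcal I^\perp=\{0\}$, we cannot have $p\mathcal I=0$, so there is some $i\in\mathcal I$ with $pi\neq 0$. The key move is then to pass to $ii^*$: because $\mathcal I$ is a $*$-ideal we have $ii^*\in\mathcal I$, hence $p(ii^*)p\in\mathcal I$. At the same time $p(ii^*)p=(pi)(pi)^*$ is a selfadjoint element of the one-dimensional corner $p\mathcal A p=\C p$, so $p(ii^*)p=\lambda p$ with $|\lambda|=\|pi\|^2\neq 0$. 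Dividing by the nonzero scalar $\lambda$ yields $p\in\mathcal I$, as desired.

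The step I expect to be the main obstacle is precisely this passage from the mere nonvanishing $pi\neq 0$, which is all that essentiality directly provides, to the membership $p\in\mathcal I$ of the projection itself. The resolution is the standard $C^*$-identity observation that $(pi)(pi)^*=pii^*p$ is a \emph{nonzero} element of the corner $p\mathcal A p$ and therefore, by minimality of $p$, a nonzero scalar multiple of $p$ that already sits inside the ideal $\mathcal I$. Everything else is routine: minimality of $p$ in $\mathcal A$ (guaranteed once $p$ is taken from $At(\mathcal A)$), the fact that $\mathcal I$ is closed under the $*$-operation and under multiplication by elements of $\mathcal A$, and the closedness of $\mathcal I$ used in the final paragraph to absorb the full $\I^{At}(\mathcal A)$ rather than just the algebraically generated part.
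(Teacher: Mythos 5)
Your proof is correct, but it takes a genuinely different route from the paper's. The paper argues structurally: it invokes Proposition \ref{atoms-are-compact} to realize $\I^{At}(\mathcal A)$ as a subalgebra of $\mathcal K(\mathcal H)$, uses Proposition \ref{embedding-scattered} to spatially represent $\mathcal A$ in $\mathcal B(\mathcal H)$ with $\mathcal I\subseteq\mathcal K(\mathcal H)$, identifies $\I^{At}(\mathcal A)$ with a direct sum $\bigoplus_{i}\mathcal K(\mathcal H_i)$ via Arveson's structure theorem, and then concludes from essentiality that $\mathcal I$ meets each simple summand $\mathcal K(\mathcal H_i)$ nontrivially and hence contains all of them. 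You instead work entirely inside $\mathcal A$: for each minimal projection $p$, essentiality produces $i\in\mathcal I$ with $pi\neq 0$, and the positive element $pii^*p=(pi)(pi)^*$ is a nonzero scalar multiple of $p$ lying in $\mathcal I$, so $p\in\mathcal I$; since $\mathcal I$ is a closed $*$-ideal, $\I^{At}(\mathcal A)\subseteq\mathcal I$ follows. Your argument is more elementary and self-contained --- it needs no representation theory, no structure theorem for subalgebras of the compacts, and it works equally well whether ``essential'' is read relative to $\mathcal A$ or to $\I^{At}(\mathcal A)$. What the paper's route buys is the reuse of machinery (the spatial picture $\I^{At}(\mathcal A)\cong\bigoplus_i\mathcal K(\mathcal H_i)$) that it has already established and exploits again elsewhere, e.g.\ in Proposition \ref{chain}; your corner computation is the sharper tool for this particular statement.
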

\begin{proof}
We know that $\I^{At}(\mathcal A)$ is isomorphic to a subalgebra of $\mathcal K(\mathcal H)$
for some Hilbert space $\mathcal H$ (Proposition \ref{atoms-are-compact}). 
By Proposition \ref{embedding-scattered} we may assume that
$\mathcal A\subseteq \mathcal B(\mathcal H)$ and $\mathcal I\subseteq \mathcal K(\mathcal H)$.
 Therefore $\I^{At}(\A)$ is equal to $\bigoplus_{i\in I} \mathcal K(\mathcal H_i)$ 
for a family of Hilbert spaces $\HH_i\subseteq \HH$ for $i\in I$ (\cite[Theorem 1.4.5.]{invitation}). 
Hence $\mathcal I$ is
an essential ideal of $\bigoplus_{i\in I} \mathcal K(\mathcal H_i)$, which means that
it must contain nonzero elements of each $\mathcal K(\mathcal H_i)$. 
By the fact that each $\K(\HH_i)$ is simple, i.e., has no nontrivial ideals, it follows that
$\mathcal I$ is the entire $\bigoplus_{i\in I} \mathcal K(\mathcal H_i)$.
That is $\mathcal I=\I^{At}(\mathcal A)$, as required.
\end{proof}

\begin{proposition}\label{essential-compact} Suppose that $\mathcal I$ is an essential
ideal of a $C^*$-algebra $\mathcal A$ which  is isomorphic
to a subalgebra of $\mathcal K(\mathcal H)$ for some Hilbert space $\mathcal H$.
Then $\mathcal I=\I^{At}(\mathcal A)$.
\end{proposition}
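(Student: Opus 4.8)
The plan is to deduce this directly from the two preceding propositions, so that essentially no new work is required. First I would observe that $\mathcal I$ is an ideal of $\mathcal A$ which, by hypothesis, is isomorphic to a subalgebra of $\mathcal K(\mathcal H)$. Proposition \ref{atoms-are-max-ideal} states precisely that $\I^{At}(\mathcal A)$ contains every ideal of $\mathcal A$ that is isomorphic to a subalgebra of the compact operators on some Hilbert space; applying this to $\mathcal I$ yields the inclusion $\mathcal I\subseteq \I^{At}(\mathcal A)$.

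Once this inclusion is in hand, I would invoke Proposition \ref{essential-maximal}. That proposition asserts that any essential ideal of $\mathcal A$ which is contained in $\I^{At}(\mathcal A)$ must in fact equal $\I^{At}(\mathcal A)$. Since $\mathcal I$ is essential in $\mathcal A$ by hypothesis and we have just placed it inside $\I^{At}(\mathcal A)$, the conclusion $\mathcal I=\I^{At}(\mathcal A)$ follows immediately.

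I do not expect any genuine obstacle here: the substantive content has already been absorbed into Propositions \ref{atoms-are-max-ideal} and \ref{essential-maximal} (which in turn rest on the reduced atomic representation and the multiplier-algebra extension of Proposition \ref{embedding-scattered}). The only thing to verify is that the two hypotheses of the present statement line up with the two facts being chained, namely that being isomorphic to a subalgebra of $\mathcal K(\mathcal H)$ is exactly what forces $\mathcal I$ into $\I^{At}(\mathcal A)$, while essentiality is exactly what promotes that inclusion to an equality. In effect the proposition is a clean repackaging of the earlier two results, trading the a priori assumption $\mathcal I\subseteq \I^{At}(\mathcal A)$ used in Proposition \ref{essential-maximal} for the more intrinsic condition that $\mathcal I$ embed into the compacts.
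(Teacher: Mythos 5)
Your argument is correct and is exactly the paper's own proof: Proposition \ref{atoms-are-max-ideal} gives $\mathcal I\subseteq \I^{At}(\mathcal A)$, and Proposition \ref{essential-maximal} upgrades this to equality using essentiality. Nothing further is needed.
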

\begin{proof} 
Since $\I^{At}(\A)$ is the largest ideal of $\A$ which is 
 isomorphic
to a subalgebra of $\mathcal K(\mathcal H)$ for some Hilbert space $\mathcal H$
 (Proposition \ref{atoms-are-max-ideal}), $\I$ is contained in $\I^{At}(\A)$. Therefore 
 Proposition \ref{essential-maximal} implies that $\mathcal I=\I^{At}(\mathcal A)$.

\end{proof}

\vfill
\break
\section{The noncommutative case - scattered $C^*$-algebras}

\begin{reptheorem}[\ref{theorem1}]{\rm\cite{jensen1, jensen2, wojtaszczyk}}
 Suppose that $\mathcal A$ is a $C^*$-algebra.
The following conditions are equivalent:
\begin{enumerate}

\item\label{ec-homomorphic} Every non-zero $*$-homomorphic image of $\mathcal A$ has a minimal projection.
\item\label{ec-sequence-new} There is an ordinal $ht(\mathcal A)$
and a continuous  increasing sequence of closed ideals $(\I_\alpha)_{\alpha\leq ht(\mathcal A)}$
such that  $\I_0=\{0\}$, $\I_{ht(\mathcal A)}= \mathcal A$ and
$$\I^{At}(\mathcal A /\I_\alpha)=\{[a]_{\I_\alpha}: a\in \I_{\alpha+1}\},$$
for every $\alpha < ht(\mathcal A)$.
\item\label{ec-sequence-old} There is an ordinal $m(\A)$
and a continuous  increasing sequence of ideals $(\J_\alpha)_{\alpha \leq m(A)}$
such that $\J_0=\{0\}$, $\J_{m(\A)} = \mathcal A$ and  $\J_{\alpha+1}/\J_\alpha$ is 
an elementary $C^*$-algebra (i.e., $^*$-isomorphic to the algebra
of all compact operators  on a Hilbert space)
for every $\alpha < m(\A)$.
\item\label{ec-subalgebra} Every non-zero subalgebra of $\mathcal A$ has a minimal projection.
\item\label{ec-no-interval} $\A$ does not contain a copy of the $C^*$-algebra $C_0((0,1])=\{f\in C((0,1]):
\lim_{x\rightarrow 0}f(x)=0\}$.
\item\label{ec-spectrum} The spectrum of every self-adjoint element is countable.
\end{enumerate}
\end{reptheorem}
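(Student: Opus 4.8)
The plan is to establish the six equivalences by organizing the conditions into two clusters joined by explicit bridges. I regard (1), (2), (3) as the \emph{structural} cluster and (4), (5), (6) as the \emph{spectral} cluster, and I will close the whole system along the route
\[
(1)\Rightarrow(2)\Rightarrow(3)\Rightarrow(1),\qquad (2)\Rightarrow(4),\qquad (4)\Rightarrow(5)\Leftrightarrow(6)\Rightarrow(1).
\]
Two small lemmas will do most of the spectral work: a \emph{key sub-claim} that a nonzero $C^*$-algebra with no minimal projection must contain a self-adjoint element with uncountable spectrum, and a \emph{bridging fact} that any self-adjoint element with uncountable spectrum generates a commutative subalgebra containing a copy of $C_0((0,1])$.

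For the structural cluster, I would prove $(1)\Rightarrow(2)$ by transfinite recursion: set $\I_0=\{0\}$, let $\I_{\alpha+1}$ be the preimage under the quotient map of $\I^{At}(\A/\I_\alpha)$ (an ideal by Proposition \ref{atoms-are-max-ideal}), and take closures of unions at limits. As long as $\A/\I_\alpha\neq 0$, condition (1) gives it a minimal projection, so $\I^{At}(\A/\I_\alpha)\neq 0$ and the chain strictly increases; a chain of closed ideals must stabilize, and at the first $\alpha$ with $\I^{At}(\A/\I_\alpha)=\{0\}$ condition (1) forces $\A/\I_\alpha=0$, giving $\I_\alpha=\A$ and $ht(\A)=\alpha$. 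For $(2)\Rightarrow(3)$ I use that each quotient $\I_{\alpha+1}/\I_\alpha=\I^{At}(\A/\I_\alpha)$ embeds in compact operators (Proposition \ref{atoms-are-compact}), hence is $*$-isomorphic to a $c_0$-sum $\bigoplus_i\K(\HH_i)$ by the structure of subalgebras of $\K(\HH)$; well-ordering the index set and inserting the partial sums refines the sequence so that every new quotient is a single $\K(\HH_i)$, i.e.\ elementary. For $(3)\Rightarrow(1)$, given a nonzero image $\phi\colon\A\to\B$, I take the least $\beta$ with $\phi[\J_\beta]\neq 0$; continuity of $\phi$ rules out limit $\beta$, so $\beta=\alpha+1$, and $\phi$ then factors through the simple algebra $\J_{\alpha+1}/\J_\alpha\cong\K(\HH)$, making $\phi[\J_{\alpha+1}]\cong\K(\HH)$ a nonzero ideal of $\B$ with a minimal projection, which is minimal in $\B$ by Lemma \ref{minimal-in-ideal}.

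For the spectral cluster I first prove the key sub-claim by cases on the projections of a nonzero $\mathcal C$ with no minimal projection: if $\mathcal C$ has no nonzero projection at all, any nonzero self-adjoint $a$ yields $C^*(a)\cong C_0(\sigma(a)\setminus\{0\})$ with no isolated points, hence $\sigma(a)$ perfect and uncountable; if some projection is order-minimal it is not a minimal projection, so Lemma \ref{none-interval} gives $C([0,1])$; and if every nonzero projection has a strictly smaller one, Lemma \ref{many-interval} gives $C([0,1])$. The bridging fact follows from the commutative argument: an uncountable $\sigma(a)$ has a nonempty perfect kernel, which admits (as in Theorem \ref{eq-compact-scattered}) a continuous surjection onto $[0,1]$ sending $0$ to $0$, so composition embeds $C_0((0,1])$ into $C^*(a)$. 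Now $(4)\Rightarrow(5)$ is immediate, since $C_0((0,1])$ is a nonzero subalgebra with no minimal projection; $(5)\Rightarrow(4)$ is the contrapositive via the sub-claim and bridging fact; $(5)\Leftrightarrow(6)$ follows because the generator of $C_0((0,1])$ is self-adjoint with spectrum $[0,1]$, while conversely uncountable spectrum produces $C_0((0,1])$; and $(6)\Rightarrow(1)$ is the contrapositive: a quotient without a minimal projection has, by the sub-claim, a self-adjoint element of uncountable spectrum, whose self-adjoint lift to $\A$ has spectrum at least as large since spectra only shrink under quotients.

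The remaining bridge $(2)\Rightarrow(4)$ is where I expect the main difficulty. Given a nonzero subalgebra $\B$ and the Cantor--Bendixson sequence, I take the least $\alpha$ with $\B\cap\I_\alpha\neq 0$; the delicate point is showing $\alpha$ is a successor, which fails from continuity alone because $\I_\alpha$ at a limit is only the closure of $\bigcup_{\gamma<\alpha}\I_\gamma$. To handle this I would apply the standard functional-calculus estimate: for positive $b^*b\in\B\cap\I_\alpha$ and $\varepsilon<\|b\|^2$, approximating $b^*b$ within $\varepsilon$ by an element of some $\I_\gamma$ places the nonzero element $(b^*b-\varepsilon)_+$ (which lies in $C^*(b^*b)\subseteq\B$) inside $\I_\gamma$, contradicting minimality. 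Thus $\alpha=\beta+1$, and since $\B\cap\I_\beta=\{0\}$ the quotient map is isometric on $\B$ and carries $\B\cap\I_{\beta+1}$ isomorphically into $\I_{\beta+1}/\I_\beta=\I^{At}(\A/\I_\beta)\subseteq\K(\HH)$; any nonzero subalgebra of compact operators has a minimal projection, and pulling it back through the isometric $*$-isomorphism gives a minimal projection of $\B$. This single limit-stage argument is the only genuinely nonformal step, the rest being bookkeeping built on the ideal-theoretic results of Section 3.
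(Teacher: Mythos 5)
Your proposal is correct, and its logical graph does close all six equivalences; the implications $(1)\Rightarrow(2)$, $(2)\Rightarrow(3)$, $(4)\Rightarrow(5)$, $(5)\Leftrightarrow(6)$, and the projection trichotomy via Lemmas \ref{none-interval} and \ref{many-interval} inside $(6)\Rightarrow(1)$ are essentially the paper's own arguments. The genuine divergence is in how condition (\ref{ec-subalgebra}) is reached and how the structural cluster is closed. The paper proves $(3)\Rightarrow(4)$ by passing to a masa of a (unitized) subalgebra, reducing to a commutative $C(K)$, intersecting the composition series $(\J_\alpha)$ with it, and showing $K$ is scattered via Theorem \ref{eq-compact-scattered}; you instead prove $(2)\Rightarrow(4)$ directly, locating the least Cantor--Bendixson ideal meeting $\B$, ruling out limit ordinals with the functional-calculus fact that $\mathrm{dist}(b^*b,\I_\gamma)\le\varepsilon$ forces $(b^*b-\varepsilon)_+\in\I_\gamma$, and then mapping $\B\cap\I_{\beta+1}$ isometrically into a subalgebra of compact operators. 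That limit-stage argument is sound and is the one genuinely new ingredient; it buys you freedom from the commutative detour and the unitization bookkeeping, at the price of a standard quotient-norm lemma the paper never needs. You also add a direct $(3)\Rightarrow(1)$ (least $\beta$ with $\phi[\J_\beta]\neq 0$, simplicity of $\K(\HH)$, Lemma \ref{minimal-in-ideal}), which the paper avoids by running a single six-term cycle. Two cosmetic points you should make explicit: the projection you pull back is minimal in the ideal $\B\cap\I_{\beta+1}$ of $\B$, and Lemma \ref{minimal-in-ideal} is what promotes it to a minimal projection of $\B$; and in your bridging fact the perfect kernel of $\sigma(a)$ may accumulate at $0$, so, as the paper does, you should first restrict to $\sigma(a)\setminus(-r,r)$ for suitable $r>0$ before producing the surjection onto $[0,1]$ and applying Tietze, to ensure the resulting copy of $C_0((0,1])$ actually lies in $C^*(a)$.
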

\begin{proof}

(\ref{ec-homomorphic}) $\Rightarrow$ (\ref{ec-sequence-new}) 
We define the sequence $(\I_{\alpha})_{\alpha<ht(\A)}$ by induction. Put $\I_0 = \{0\}$.
At each successor stage $\alpha +1$, if $\mathcal A / \I_{\alpha}$ is non-zero,
then $\I^{At}(\mathcal A / \I_{\alpha})$ is non-zero by (\ref{ec-homomorphic}). Let 
$\I_{\alpha +1}= \sigma_{\alpha}^{-1}(\I^{At}(\mathcal A / \I_{\alpha}))$, 
where $\sigma_\alpha : \mathcal A \rightarrow \mathcal A /\I_\alpha$ is the  quotient map. 
If $\gamma$ is a limit ordinal let $\I_\alpha = \overline{\bigcup_{\alpha < \gamma} \I_\alpha}$. 
The induction
has to terminate at some ordinal $ht(\mathcal A)$, whose
 cardinality does not exceed ${|\mathcal A|}$.

(\ref{ec-sequence-new}) $\Rightarrow$ (\ref{ec-sequence-old}) For each ordinal $\alpha < ht(\mathcal A)$ 
we have $\I_{\alpha+1}/ \I_\alpha= \I^{At}(\mathcal A / \I_\alpha)$ and hence 
by Proposition \ref{atoms-are-compact} the quotient $\I_{\alpha+1}/ \I_\alpha$ is isomorphic to 
a subalgebra of the algebra of compact operators. Such algebras are isomorphic to algebras
of the form 
$\bigoplus_{i\in \Lambda_{\alpha}} \mathcal K(\mathcal H_{\alpha,i})$ for some Hilbert spaces 
$\mathcal H_{\alpha, i }$ and a totally ordered set $\Lambda_{\alpha}$ (1.4.5. of \cite{invitation}). 
Let $\nu_\alpha : \I_{\alpha+1}/ \I_\alpha \rightarrow \bigoplus_{i\in \Lambda_{\alpha}} 
\mathcal K(\mathcal H_{\alpha,i})$ be such an isomorphism and define 
$\J_{\alpha, i}= \sigma_{\alpha}^{-1} \circ \nu_{\alpha}^{-1} (\bigoplus_{j\leq i} 
\mathcal K(\mathcal H_{\alpha,j}))$. Order 
$\Gamma = \{(\alpha,i) : i\in \Lambda_{\alpha}, \alpha \leq ht(\mathcal A)\}$
 lexicographically and let $m(\A)$ be the order type of this set. 
Re-enumerate the set of $\{\J_{\alpha,i} : (\alpha, i) \in \Gamma \}$ by
 the ordinal $m(\A)$ as $\{\J_\gamma : \gamma \leq m(\A)\}$. 
It is easy to check that $(\J_{\gamma})_{\gamma \leq m(\A)}$ has the required properties.

(\ref{ec-sequence-old}) $\Rightarrow$ (\ref{ec-subalgebra}) 
Let $(\J_\alpha)_{\alpha \leq  m(\A)}$
be a composition series as in (\ref{ec-sequence-old}).

First assume that $\mathcal C$ is a unital subalgebra of $\mathcal A$.  By passing to a masa of $\mathcal C$
and using Lemma
\ref{minimal-in-ideal}, we may assume that $\mathcal C$ is commutative
and hence isomorphic to an algebra of the form $C(K)$ for some compact Hausdorff $K$.
Consider the sequence $(\J_\alpha')_{\alpha\leq m(\A)}$ of ideals of $C(K)$ 
defined by $\J_\alpha'=\J_\alpha\cap \mathcal C$. 
Then $\J_{\alpha+1}'/\J_{\alpha}'$ is isomorphic
to a subalgebra of $\J_{\alpha+1}/\J_{\alpha}$, but being a commutative
subalgebra of the algebra of compact operators $\J_{\alpha+1}'/\J_{\alpha}'$ is isomorphic to 
$c_0(\Gamma_\alpha)$
for some discrete space $\Gamma_\alpha$. As all ideals in $C(K)$ are of the form
$\{f\in C(K): f|F=0\}$ for some closed $F\subseteq K$, we obtain a 
corresponding decreasing
continuous sequence $(F_\alpha)_{\alpha \leq m(\A)}$. Since $C(K)/\{f\in C(K): f|F=0\}$
is canonically isomorphic to $C(F)$, we conclude that 
$\{f\in C(F_{\alpha}): f|F_{\alpha+1}=0\}$ is isomorphic to $c_0(\Gamma_\alpha)$, and therefore
 $F_\alpha\setminus F_{\alpha+1}$ is discrete, i.e. consists only
of points isolated in $F_\alpha$. Therefore, as in the proof of (2) to (3) of Theorem \ref{eq-compact-scattered},
we easily can obtain that $K$ satisfies (3) of this theorem, so $K$ is
scattered and hence by Lemma \ref{minimal-one-dimensional}, the algebra $C(K)$ and consequently $\mathcal C$
has a minimal projection.

Thus we proved that all unital subalgebras of algebras satisfying (\ref{ec-sequence-old}) have
minimal projections. Now suppose that 
$\mathcal C$ is a nonunital subalgebra of $\A$. 
If $\A$ is unital, extend $\mathcal C$ to the unital subalgebra of
$\A$ by adding the unit of $\A$ and apply Lemma \ref{atoms-unitization}. If
$\A$ is not unital, first consider  its minimal unitization $\widetilde\A$ of $\A$ and note that 
it satisfies (\ref{ec-sequence-old}) with the sequence $(\J_\alpha)_{\alpha \leq  m(\A)}$ extended by
one biggest term $\J_{m(\A)+1}=\widetilde\A$ and now follow the previous case.

(\ref{ec-subalgebra}) $\Rightarrow$
(\ref{ec-no-interval})
It is clear as $C_0((0,1])$ has no (minimal) projections.

(\ref{ec-no-interval}) $\Rightarrow$
(\ref{ec-spectrum})
Assume that (\ref{ec-spectrum}) does not hold 
and take a self-adjoint element $h$ of $\mathcal A$ with uncountable spectrum $X\subseteq \C$.
The algebra generated by $h$
is isomorphic  to
$C_0(X\cup\{0\})=\{f\in C_0(X\cup\{0\}): \lim_{x\rightarrow 0}f(x)=0\}$ 
where $X\cup\{0\}$ is compact (Corollary 1.2.3 of \cite{sakai}).
As $X$ is uncountable, there is $r>0$ such that $X\setminus (-r, r)$ is uncountable and
hence nonscattered and so there is
 a continuous surjection $\phi: (X\cup\{0\})\rightarrow [0,1]$ such that $\phi(0)=0$.
Therefore $C_0((0,1])$ embeds into $C_0(X\cup\{0\})$ and consequently into $\mathcal A$.

(\ref{ec-spectrum}) $\Rightarrow$ (\ref{ec-homomorphic})
Let $\pi: \A\rightarrow \mathcal B$ be a surjective $*$-homomorphism.
If $\A$ is unital, so is $\B$ (and both are equal to their minimal unitizations, respectively). If $\A$ is not unital, then there is 
a surjective $*$-homomorphism $\pi_1:\widetilde \A\rightarrow\widetilde \B$
sending $\lambda1_{\widetilde\A}+a$ to $\lambda1_{\widetilde\B}+\pi(a)$ for all $\lambda\in \C$
and $a\in \A$, so in both cases we have a surjective $\pi_1:\widetilde \A\rightarrow\widetilde \B$.

Note also that  $(\lambda1_{\widetilde\A}+a)-\lambda'1_{\widetilde\A}$ is invertible
 if and only if  $a-(\lambda'-\lambda)I_{\widetilde\A}$ is,
i.e., the spectrum of $\lambda1_{\widetilde\A}+a$ is a translation of the spectrum of $a$
and hence the spectra of all elements of $\widetilde\A$ are countable. 

If $a-\lambda1_{\widetilde\A}$ is invertible, then $\pi_1(a)-\lambda1_{\widetilde\B}$ is invertible as well,
so the spectrum of $\pi_1(a)$ in ${\widetilde\B}$ is not bigger than the spectrum of
$a$ in ${\widetilde\A}$, hence the spectra of all elements of  in ${\widetilde\B}$ are countable.

Applying Lemmas \ref{none-interval} and \ref{many-interval} in the unitization
of $\widetilde \B$ where there is at least one projection equal to the unit, we obtain 
either a minimal projection or a copy of $C([0,1])$. The latter is impossible 
because many elements of $C([0,1])$ have uncountable spectra. So $\B$ has a minimal projection,
a required.
\end{proof}

Before the next lemma recall Theorem \ref{eq-compact-scattered}.

\begin{lemma}\label{c(k)-ideals} Suppose that $K$ is a scattered compact Hausdorff space
with Cantor-Bendixson sequence $(K^{(\alpha)})_{\alpha\leq ht(K)}$. Then
$C(K)$ is a commutative scattered $C^*$-algebra with the Cantor-Bendixson
sequence $(\I_\alpha)_{\alpha\leq ht(C(K))}$ satisfying $ht(C(K))=ht(K)$ and
$$\I_\alpha=C^{(\alpha)}(K)=\{f\in C(K): f|K^{(\alpha)}=0\},$$
$$\I^{At}(C(K)/C^{(\alpha)}(K)) \cong c_0(K^{(\alpha)}\setminus K^{(\alpha+1)}).$$
\end{lemma}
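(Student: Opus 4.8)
The plan is to first establish that $C(K)$ is scattered and to identify $\I^{At}$ for commutative algebras, then to run a transfinite induction matching the algebraic Cantor-Bendixson sequence $(\I_\alpha)$ against the topological one $(C^{(\alpha)}(K))$. For scatteredness: by Theorem \ref{eq-compact-scattered} the range of every $f\in C(K)$ is countable, and since in a commutative $C^*$-algebra the spectrum of a self-adjoint element coincides with its range, $C(K)$ satisfies the countable-spectrum condition of Theorem \ref{theorem1} and is therefore scattered. The key auxiliary computation is that for any compact Hausdorff $L$, Lemma \ref{minimal-one-dimensional} identifies the minimal projections of $C(L)$ as exactly the $\chi_{\{x\}}$ with $x$ isolated in $L$; hence $\I^{At}(C(L))$, the closed ideal they generate, equals $c_0(\mathrm{Iso}(L))$, where $\mathrm{Iso}(L)=L\setminus L'$.

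Next I would prove $\I_\alpha=C^{(\alpha)}(K)$ by transfinite induction. The base case is immediate: $C^{(0)}(K)=\{f:f|K=0\}=\{0\}=\I_0$. For the successor step, assume $\I_\alpha=C^{(\alpha)}(K)$. Since $K^{(\alpha)}$ is closed, the restriction map induces the canonical isometric isomorphism $C(K)/C^{(\alpha)}(K)\cong C(K^{(\alpha)})$ (Tietze). Applying the auxiliary computation to $L=K^{(\alpha)}$ and noting $\mathrm{Iso}(K^{(\alpha)})=K^{(\alpha)}\setminus (K^{(\alpha)})'=K^{(\alpha)}\setminus K^{(\alpha+1)}$, I get $\I^{At}\bigl(C(K)/C^{(\alpha)}(K)\bigr)\cong c_0(K^{(\alpha)}\setminus K^{(\alpha+1)})$, which is precisely the second displayed formula. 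Viewed inside $C(K^{(\alpha)})$ this ideal is $\{g:g|K^{(\alpha+1)}=0\}$, so pulling it back through the quotient map (restriction to $K^{(\alpha)}$), exactly as prescribed by the recursion defining the Cantor-Bendixson sequence in Theorem \ref{theorem1}, yields $\I_{\alpha+1}=\{f\in C(K):f|K^{(\alpha+1)}=0\}=C^{(\alpha+1)}(K)$.

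The limit stage is where the genuine work lies, and I expect it to be the main obstacle, since it is the only place compactness is essential. For a limit $\gamma$ I must show $\overline{\bigcup_{\alpha<\gamma}C^{(\alpha)}(K)}=C^{(\gamma)}(K)$, using $K^{(\gamma)}=\bigcap_{\alpha<\gamma}K^{(\alpha)}$ (continuity of the topological sequence). One inclusion is free: each $C^{(\alpha)}(K)\subseteq C^{(\gamma)}(K)$ because $K^{(\gamma)}\subseteq K^{(\alpha)}$, and $C^{(\gamma)}(K)$ is closed. For the reverse inclusion, take $f$ with $f|K^{(\gamma)}=0$ and $\varepsilon>0$. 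The open set $\{|f|<\varepsilon\}$ contains $\bigcap_{\alpha<\gamma}K^{(\alpha)}$, so the decreasing family of compact sets $K^{(\alpha)}\cap\{|f|\geq\varepsilon\}$ has empty intersection; by the finite intersection property one of them is empty, giving $\alpha<\gamma$ with $\|f|K^{(\alpha)}\|<\varepsilon$. The isometric quotient identification above gives $\|f+C^{(\alpha)}(K)\|=\|f|K^{(\alpha)}\|<\varepsilon$, hence some $g\in C^{(\alpha)}(K)$ with $\|f-g\|<\varepsilon$, so $f\in\overline{\bigcup_{\alpha<\gamma}C^{(\alpha)}(K)}$. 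This closes the induction.

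Finally, for the height I would observe that the recursion terminates exactly when $C(K)/\I_\alpha=\{0\}$, i.e.\ when $\I_\alpha=C^{(\alpha)}(K)=C(K)$, which by the identification means $K^{(\alpha)}=\emptyset$. Since $ht(K)$ is by definition the least ordinal with $K^{(ht(K))}=\emptyset$ (Theorem \ref{eq-compact-scattered}), it follows that $ht(C(K))=ht(K)$ and $\I_{ht(K)}=C(K)$, which completes the identification of the Cantor-Bendixson sequence and both displayed formulas.
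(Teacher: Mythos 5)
Your proof is correct and complete; the paper in fact states Lemma \ref{c(k)-ideals} without any proof, and your argument (scatteredness via the countable-spectrum criterion, the identification $\I^{At}(C(L))\cong c_0(L\setminus L')$, the isometric quotient $C(K)/C^{(\alpha)}(K)\cong C(K^{(\alpha)})$, and the compactness argument at limits) is exactly the standard verification the authors leave to the reader.
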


\begin{definition}
We call a $C^*$-algebra $\mathcal A$ atomic if and only if $\I^{At}(\mathcal A)$ is an essential ideal.
\end{definition}

The following corresponds to the commutative fact that
atoms are dense in superatomic Boolean algebras:

\begin{proposition}\label{scattered-are-atomic} Every  
 scattered $C^*$-algebra is atomic.
\end{proposition}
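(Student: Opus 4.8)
The plan is to use the annihilator characterization of essential ideals recalled just before Lemma \ref{ideal-in-ideal}: an ideal $\mathcal I$ of $\A$ is essential if and only if $\mathcal I^\perp = \{a \in \A : a\mathcal I = 0\} = \{0\}$. Hence it suffices to prove that $\I^{At}(\A)^\perp = \{0\}$ whenever $\A$ is scattered. I would set $\J = \I^{At}(\A)^\perp$, which (as noted in that same passage) is a closed two-sided ideal of $\A$, and argue by contradiction assuming $\J \neq \{0\}$.

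Since $\A$ is scattered, the \emph{definition} of scatteredness applies to the nonzero $C^*$-subalgebra $\J$ directly: $\I^{At}(\J) \neq \{0\}$, so $\J$ contains a minimal projection $p$ (equivalently one may invoke Theorem \ref{theorem1}(\ref{ec-subalgebra})). The key structural input is that $\J$ is an \emph{ideal} of $\A$, not merely a subalgebra, so Lemma \ref{minimal-in-ideal} guarantees that a minimal projection of the ideal $\J$ is automatically a minimal projection of $\A$. Consequently $p$ is a minimal projection of $\A$, and therefore $p \in \I^{At}(\A)$.

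The contradiction is then immediate. On one hand $p \in \J = \I^{At}(\A)^\perp$ means $p\,b = 0$ for every $b \in \I^{At}(\A)$; on the other hand $p$ itself lies in $\I^{At}(\A)$, so taking $b = p$ forces $p = p^2 = 0$, contradicting that $p$ is a nonzero projection. Thus $\J = \{0\}$, i.e.\ $\I^{At}(\A)$ is essential, which is exactly the statement that $\A$ is atomic.

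I do not expect a serious obstacle here; the argument is short once the right characterization of ``essential'' is chosen. The only points needing a little care are that the annihilator $\I^{At}(\A)^\perp$ genuinely is a closed two-sided ideal (so that it qualifies both as a nonzero subalgebra to which scatteredness applies and as an ideal to which Lemma \ref{minimal-in-ideal} applies), and that the definition of scattered is being invoked for $\J$ rather than for $\A$ itself. Both are routine given the material already developed, so the main ``idea'' is simply the clean observation that a minimal projection living inside the annihilator of the atomic ideal would have to annihilate itself.
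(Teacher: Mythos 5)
Your proof is correct and follows essentially the same route as the paper: both consider the annihilator $\I^{At}(\A)^\perp$, note it is an ideal, and use scatteredness together with Lemma \ref{minimal-in-ideal} to force it to be zero. The only cosmetic difference is that the paper packages the key step as an application of Proposition \ref{intersection-of-atomic} (giving $\I^{At}(\J)=\I^{At}(\A)\cap\J=\{0\}$), whereas you unpack the same ingredients directly via a self-annihilating minimal projection.
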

\begin{proof} Let $\A$ be a scattered $C^*$-algebra.
It is easy to check that $\mathcal I^\perp$ 
is a (closed) ideal of $\mathcal A$ for any ideal $\mathcal I\subseteq \mathcal A$.
Applying Proposition \ref{intersection-of-atomic} for $\mathcal J=\I^{At}(\mathcal A)^\perp$
we conclude that $\I^{At}(\mathcal J)=\{0\}$, which  by Theorem \ref{theorem1}
(\ref{ec-subalgebra}) means  that $\mathcal J=0$.  Therefore
$\I^{At}(\mathcal A)$ is essential.
\end{proof}

\begin{proposition}\label{sub} Every subalgebra and every
$*$-homomorphic image of a scattered $C^*$-algebra is 
a scattered $C^*$-algebra.
\end{proposition}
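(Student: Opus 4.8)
The plan is to derive both claims directly from the equivalences recorded in Theorem \ref{theorem1}, exploiting the fact that each of the two operations in question is transitive: a subalgebra of a subalgebra is again a subalgebra, and a $*$-homomorphic image of a $*$-homomorphic image is again a $*$-homomorphic image. The substantive work has already been done in the proof of Theorem \ref{theorem1}, so here I expect only bookkeeping, together with the routine observation that closed subalgebras and quotients of $C^*$-algebras are again $C^*$-algebras.

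First I would treat subalgebras, where no appeal to Theorem \ref{theorem1} is even needed and the definition suffices. Let $\A$ be scattered and let $\B\subseteq\A$ be a subalgebra. Take any nonzero subalgebra $\mathcal C\subseteq\B$; then $\mathcal C$ is also a nonzero subalgebra of $\A$, so scatteredness of $\A$ gives $\I^{At}(\mathcal C)\neq\{0\}$. As $\mathcal C$ was an arbitrary nonzero subalgebra of $\B$, the algebra $\B$ is scattered by definition.

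For the $*$-homomorphic image the definition refers to subalgebras rather than to quotients, so here I would pass through Theorem \ref{theorem1}. Let $\pi\colon\A\to\B$ be a surjective $*$-homomorphism with $\A$ scattered, noting that $\B$ is again a $C^*$-algebra as the (automatically closed) image of a $*$-homomorphism. Since $\A$ is scattered it satisfies condition (\ref{ec-homomorphic}), and to conclude that $\B$ is scattered it suffices, by the same theorem, to check that $\B$ satisfies (\ref{ec-homomorphic}). So let $\rho\colon\B\to\mathcal C$ be a surjective $*$-homomorphism onto a nonzero $C^*$-algebra $\mathcal C$. Then $\rho\circ\pi\colon\A\to\mathcal C$ exhibits $\mathcal C$ as a nonzero $*$-homomorphic image of $\A$, which therefore has a minimal projection by (\ref{ec-homomorphic}) applied to $\A$. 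Hence $\B$ satisfies (\ref{ec-homomorphic}) and is scattered.

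I do not anticipate a genuine obstacle, since both parts reduce to transitivity of the relevant operation plus the right reading of Theorem \ref{theorem1}. If one prefers a single uniform argument, one can instead use the spectral characterization (\ref{ec-spectrum}): for a subalgebra the spectrum of a self-adjoint element is unchanged by $C^*$-spectral permanence, while for a quotient $\pi\colon\A\to\B$ any self-adjoint $b\in\B$ lifts to a self-adjoint $a\in\A$ (replace any lift $a_0$ by $(a_0+a_0^*)/2$) with $\operatorname{sp}(b)=\operatorname{sp}(\pi(a))\subseteq\operatorname{sp}(a)$, so a self-adjoint element of uncountable spectrum in $\B$ would force one in $\A$; in either case (\ref{ec-spectrum}) for $\A$ passes to $\B$.
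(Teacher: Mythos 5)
Your proof is correct and follows essentially the same route as the paper, which likewise observes that condition (\ref{ec-subalgebra}) of Theorem \ref{theorem1} (equivalently, the definition of scatteredness) is hereditary for subalgebras and condition (\ref{ec-homomorphic}) is hereditary for $*$-homomorphic images, both by transitivity. The alternative argument via (\ref{ec-spectrum}) is a fine extra remark but not needed.
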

\begin{proof}
Condition (1) of Theorem \ref{theorem1} is hereditary with respect to $*$-homomorphic
images and condition (4) is hereditary with respect to subalgebras.
\end{proof}

By locally finite dimensional (LF) we will mean a $C^*$-algebra in which
every finite set of elements can be approximated from a finite dimensional subalgebra,
for separable algebras it is equivalent to be AF  which means that there is a directed family
of finite dimensional subalgebras with dense union
(see \cite{farah-katsura} for detailed discussion of versions of the definition
of nonseparable AF algebras).  Recall also that a  $C^*$-algebra $\A$ is
called GCR (type I) if and only if $CCR(\A/\J)\not=\{0\}$ whenever $\J\subseteq \A$
is an ideal of $\A$ satisfying $\J\not=\A$.
The following, among others, corresponds to the fact (Proposition \ref{0-dim}) that
scattered compact Hausdorff spaces are zero-dimensional. Of course since all scattered $C^*$-algebras
are LF $C^*$-algebras (Lemma 5.1 of \cite{lin}) which have real rank zero 
(V. 7. 2 of \cite{davidson}), so do scattered $C^*$-algebras. However, this
can be also observed independently.

\begin{proposition}\label{real-rank-zero}\cite{lin, kusuda-af, tomiyama}  Suppose that $\A$ is a 
scattered $C^*$-algebra. Then 
\begin{enumerate}
\item $\A$ has  real rank zero,
\item $\A$ is LF,
\item $\A$ is a GCR algebra.
\end{enumerate}
\end{proposition}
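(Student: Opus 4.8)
The plan is to prove the three assertions about a scattered $C^*$-algebra $\A$ by reducing each one to the Cantor-Bendixson structure already established in Theorem \ref{theorem1}, and to whatever possible to exploit commutative subalgebras where the analogous topological facts are known (Proposition \ref{0-dim}). The three items are of genuinely different flavor: (3) is essentially a soft consequence of the equivalences, (1) is a local approximation statement about self-adjoint elements, and (2) is the deepest, requiring an actual approximation of finite tuples by finite-dimensional subalgebras.

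\textbf{Item (3), the GCR property.} First I would handle this since it is the easiest. By the definition of GCR recalled just above, I must show $CCR(\A/\J)\neq\{0\}$ for every proper ideal $\J\subsetneq\A$. But $\A/\J$ is a nonzero $*$-homomorphic image of $\A$, hence by Theorem \ref{theorem1}(\ref{ec-homomorphic}) it has a minimal projection $p$. Then $\I^{At}(\A/\J)\neq\{0\}$, and by Proposition \ref{ccr} we have $\I^{At}(\A/\J)\subseteq CCR(\A/\J)$, so $CCR(\A/\J)\neq\{0\}$. This is the whole argument.

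\textbf{Item (1), real rank zero.} Real rank zero means the self-adjoint elements with finite spectrum are dense in the self-adjoint part of $\A$ (working in $\widetilde{\A}$ if $\A$ is nonunital). Given a self-adjoint $h\in\A$, the abelian $C^*$-algebra $C^*(h)\cong C_0(\sigma(h)\setminus\{0\})$ has, by Theorem \ref{theorem1}(\ref{ec-spectrum}), countable spectrum $\sigma(h)$, which is therefore a countable compact (or locally compact) subset of $\R$ and in particular scattered and totally disconnected. Hence the continuous function $t\mapsto t$ can be uniformly approximated on $\sigma(h)$ by continuous functions taking finitely many values, i.e. by a real step function with clopen level sets; applying the continuous functional calculus, these approximants become self-adjoint elements of $C^*(h)\subseteq\A$ with finite spectrum that converge in norm to $h$. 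This gives real rank zero directly. I would phrase the approximation cleanly via the total disconnectedness of $\sigma(h)$ so that no fuss about the base point $0$ is needed.

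\textbf{Item (2), LF.} This is the main obstacle and I expect the author to invoke the composition series from Theorem \ref{theorem1}(\ref{ec-sequence-old}), since approximating an arbitrary finite tuple from a finite-dimensional subalgebra is a nontrivial induction along that series. The clean strategy is: given a finite set $F\subseteq\A$ and $\varepsilon>0$, one works up the sequence $(\J_\alpha)_{\alpha\leq m(\A)}$ whose successive quotients are elementary ($\cong\K(\HH)$), and uses at each stage that in $\K(\HH)$ finite tuples are approximable by finite-rank operators, which generate a finite-dimensional subalgebra. The difficulty is that a finite-dimensional approximant in a quotient must be lifted back to $\A$ while remaining (approximately) a subalgebra; standard lifting of projections and matrix units in $C^*$-algebras of real rank zero (already established as item (1)), together with the fact that $\K(\HH)$ is generated by its minimal projections, is what makes this work. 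Rather than carry out the full lifting, I would most likely cite the existing literature for the hard direction, as the excerpt itself does, noting that scattered $C^*$-algebras are LF by Lin (Lemma 5.1 of \cite{lin}) and that real rank zero combined with the elementary composition series yields the local finite-dimensionality; the self-contained argument would proceed by transfinite induction on $m(\A)$, approximating first the image of $F$ in the bottom elementary quotient by a finite-dimensional subalgebra, lifting its generating system of matrix units using real rank zero, and then correcting the remainder, which lies closer to a lower ideal, by the inductive hypothesis.
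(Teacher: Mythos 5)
Your items (1) and (3) are correct and essentially identical to the paper's proof: for (3) the paper also just combines Proposition \ref{sub} (quotients of scattered algebras are scattered, hence have a minimal projection) with Proposition \ref{ccr}; for (1) the paper likewise passes to the commutative algebra generated by the self-adjoint element, concludes its spectrum/Gelfand space is scattered (it uses condition (\ref{ec-subalgebra}) of Theorem \ref{theorem1} where you use (\ref{ec-spectrum}), an immaterial difference), and approximates by linear combinations of projections. The real divergence is in item (2). The paper does not run a transfinite induction with an explicit lifting of matrix units on the possibly nonseparable algebra $\A$; instead it first reduces to separable subalgebras, observing that LF is a local property (so $\A$ is LF iff every separable subalgebra is LF) and that every separable subalgebra of $\A$ is again scattered by Proposition \ref{sub}, and only then runs the induction along the (now countable) composition series, with the successor step handled by Effros's extension theorem that a separable $C^*$-algebra is AF whenever an ideal and the corresponding quotient are AF (III.6.3 of \cite{davidson}). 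Your proposed successor step -- approximate in the elementary quotient, lift matrix units using real rank zero, correct the remainder by induction -- is essentially the proof of that extension theorem, so your route is viable, but as written it glosses over exactly the hard perturbation argument and, taken literally as an induction on a nonseparable $\A$, it would need either a nonseparable version of the extension step or the separability reduction you omit. Since you explicitly defer to \cite{lin} for the hard direction, as the paper's introductory remark also permits, I would not call this a fatal gap, but the separable-subalgebra reduction is the one structural idea of the paper's argument that your sketch is missing, and adding it would let you quote the standard separable extension theorem verbatim instead of reproving it.
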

\begin{proof} For (1) will use one of the equivalent conditions to being of the real rank zero, which says that 
 self-adjoint elements of the algebra can be approximated by self-adjoint elements
which have finite spectrum (V. 7. 3 of \cite{davidson}).  Suppose that $a$ is a self-adjoint element of $\A$.
Let $K$ be compact Hausdorff such that the unital $C^*$-algebra generated 
in $\mathcal A$ by
$a$ is isomorphic to $C(K)$. By Theorem \ref{theorem1} (\ref{ec-subalgebra}) 
$K$ must be scattered and so the element corresponding to $a$ in $C(K)$ must be
approximated by a  linear combination of projections which have spectrum $\{0,1\}$
and are self adjoint. 

To prove (2) we need to use, as in \cite{lin}, the fact due to Effros,
that  a separable $C^*$-algebra $\A$ is approximately finite if for an ideal $\J\subseteq \A$,  both $\J$ and the 
quotient $\A/\J$ are
approximately finite (III. 6. 3. \cite{davidson}) and
the fact that  a $C^*$-algebra is LF if all separable subalgebras  
are LF, which follows directly from our definition. By Proposition \ref{sub}
any  separable subalgebra of $\A$ is scattered, so one can apply the above fact
and the transfinite induction  using either (2) or (3) of Theorem \ref{theorem1}
and  obtain condition (2).

To prove (3) let $\J\subseteq A$ be an ideal of $\A$ satisfying $\J\not=\A$.
Then $\A/\J$ is scattered by  Propositon \ref{sub} and so $\I^{At}(\A/ \J)\not=\{0\}$.
Now apply Proposition \ref{ccr} to conclude that $CCR(\A/ \J)\not=\{0\}$.
\end{proof}

\begin{proposition}[\cite{kusuda-af}]\label{commutative-af}
The following are equivalent for a $C^*$-algebra $\A$:
\begin{enumerate}
\item $\A$ is scattered,
\item Every subalgebra of $\A$ is LF,
\item Every commutative (and separable) subalgebra of $\A$ is scattered,
\end{enumerate}
\end{proposition}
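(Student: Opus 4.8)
The plan is to establish the three equivalences by running the cycle $(1)\Rightarrow(2)\Rightarrow(3)\Rightarrow(1)$, since each single implication in this order reduces cleanly to results already proved, namely the list of equivalent conditions in Theorem \ref{theorem1} together with Propositions \ref{sub} and \ref{real-rank-zero}.

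For $(1)\Rightarrow(2)$ I would simply combine two earlier facts: if $\A$ is scattered then every subalgebra $\mathcal B\subseteq\A$ is again scattered by Proposition \ref{sub}, and every scattered algebra is LF by Proposition \ref{real-rank-zero}(2); hence every subalgebra of $\A$ is LF. For $(3)\Rightarrow(1)$ I would use the spectral characterization \eqref{ec-spectrum} of Theorem \ref{theorem1}: given a self-adjoint $a\in\A$, the commutative separable $C^*$-subalgebra $\mathcal C=C^*(a)$ it generates is scattered by (3), so applying \eqref{ec-spectrum} \emph{inside} $\mathcal C$ shows that the spectrum of $a$ computed in $\mathcal C$ is countable. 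By spectral permanence for $C^*$-algebras this spectrum coincides with $\sigma_\A(a)$ up to at most the point $0$, so $\sigma_\A(a)$ is countable as well; as $a$ was arbitrary, Theorem \ref{theorem1} \eqref{ec-spectrum}$\Rightarrow$\eqref{ec-homomorphic} yields that $\A$ is scattered.

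The implication $(2)\Rightarrow(3)$ is where the real content sits, and I expect it to be the main obstacle. The key observation is that $C_0((0,1])$ is \emph{not} LF: it is projectionless, since any idempotent must be a continuous $\{0,1\}$-valued function on the connected set $(0,1]$ that vanishes at $0$, hence is identically $0$; therefore its only finite-dimensional subalgebra is $\{0\}$, and no element of norm one can be approximated from a finite-dimensional subalgebra. Consequently, if every subalgebra of $\A$ is LF, then $\A$ (and a fortiori any commutative separable subalgebra $\mathcal C\subseteq\A$) contains no copy of $C_0((0,1])$, and by the equivalence \eqref{ec-no-interval}$\Leftrightarrow$\eqref{ec-homomorphic} of Theorem \ref{theorem1} each such $\mathcal C$ is scattered.

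The subtlety worth flagging at this step is that one must not confuse ``$\A$ is LF'' with the much stronger hypothesis ``\emph{every} subalgebra of $\A$ is LF''. The former does not force scatteredness even in the commutative case: the algebra $C(2^{\N})$ is LF (indeed an inductive limit of the $C(2^n)$) yet totally non-scattered. What makes $(2)$ bite is precisely that $C(2^{\N})$ contains the non-LF subalgebra $C([0,1])$, so the hereditary requirement excludes such algebras. Beyond this observation and the routine matching of spectra between $C^*(a)$ and $\A$, every other step is a direct citation of the results already established.
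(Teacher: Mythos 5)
Your proposal is correct and rests on the same ingredients as the paper's proof: Propositions \ref{sub} and \ref{real-rank-zero}(2) for the forward direction, and the observation that $C_0((0,1])$ is neither scattered nor LF combined with condition \eqref{ec-no-interval} of Theorem \ref{theorem1} for the converse. The only cosmetic difference is that you close the cycle $(3)\Rightarrow(1)$ via the spectral condition \eqref{ec-spectrum} and spectral permanence, whereas the paper gets both $(2)\Rightarrow(1)$ and $(3)\Rightarrow(1)$ directly from \eqref{ec-no-interval}; both routes are one-step applications of Theorem \ref{theorem1}.
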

\begin{proof} 
(1) $\rightarrow$ (2), (3). This follows from Propositions \ref{sub} and \ref{real-rank-zero} (2).
The fact that $(2)$ or $(3)$ imply $(1)$ follows from
Theorem \ref{theorem1} (\ref{ec-no-interval}), as $C_0((0,1])$ is neither scattered nor $LF$.
\end{proof}

\begin{proposition}\label{not-gcr}
There are scattered $C^*$-algebras $\A$ such that the ideal $CCR(\A)$ does
not appear in  the Cantor-Bendixson sequence $(\I_\alpha)_{\alpha\leq ht(\A)}$.
\end{proposition}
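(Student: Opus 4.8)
The plan is to exhibit one explicit example and verify that its ideal $CCR(\mathcal A)$ is trapped strictly between the first two terms of the Cantor--Bendixson sequence, hence equal to no term at all. I would take
$$\mathcal A=C(\omega+1)\otimes\widetilde{\K},$$
where $\widetilde{\K}=\K(\HH)+\C 1$ is the minimal unitization of the compact operators on a separable infinite-dimensional $\HH$; concretely $\mathcal A$ is the algebra of norm-convergent sequences in $\widetilde{\K}$ with the value at the limit point $\infty$ equal to the limit. First I would record that $\mathcal A$ is scattered: $\widetilde{\K}$ is scattered, being an extension of $\K(\HH)$ by $\C$, and tensoring its composition series with that of $C(\omega+1)$ yields a composition series of $\mathcal A$ all of whose consecutive quotients are elementary, so scatteredness follows from Theorem \ref{theorem1} (\ref{ec-sequence-old}).

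Next I would identify $CCR(\mathcal A)=C(\omega+1)\otimes\K(\HH)$. This ideal is CCR, since it is a continuous field of compact operators over the Hausdorff space $\omega+1$: all its irreducible representations are evaluations at points of $\omega+1$ followed by the identity representation of $\K(\HH)$, and so land in the compacts. To see that it is the \emph{largest} CCR ideal, I would argue that any ideal $\I$ properly containing it surjects onto a nonzero ideal of $\mathcal A/(C(\omega+1)\otimes\K(\HH))\cong C(\omega+1)$, which must omit the scalar at some isolated point $m<\infty$; the preimage then contains the corner $(\chi_{\{m\}}\otimes 1_{\widetilde{\K}})\mathcal A(\chi_{\{m\}}\otimes 1_{\widetilde{\K}})\cong\widetilde{\K}$, a non-CCR ideal, so $\I$ itself cannot be CCR (ideals of CCR algebras are CCR). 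Thus $CCR(\mathcal A)=C(\omega+1)\otimes\K(\HH)$.

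Then I would compute the first two Cantor--Bendixson terms. The minimal projections of $\mathcal A$ are exactly $\chi_{\{n\}}\otimes e$ for $n<\infty$ and $e$ rank one, so $\I_1=\I^{At}(\mathcal A)=c_0(\omega)\otimes\K(\HH)$ is the ideal of compact-valued sequences vanishing at $\infty$; in particular $\I_1\subsetneq CCR(\mathcal A)$, because the latter contains sequences whose limit at $\infty$ is a nonzero compact. The main computation is of $\I_2$. Passing to $\mathcal A/\I_1$, I would identify it with the pullback $C(\omega+1)\times_{\C}\widetilde{\K}$, i.e.\ scalar sequences glued at $\infty$, through the canonical character of $\widetilde{\K}$, to the $\infty$-fibre $\widetilde{\K}$. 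Its atomic ideal $\I^{At}(\mathcal A/\I_1)$ now contains two kinds of minimal projections: the scalar ones $\chi_{\{n\}}$ from the isolated points $n<\infty$, and the rank-one projections inside the $\infty$-fibre $\K(\HH)$. Hence $\I^{At}(\mathcal A/\I_1)=c_0(\omega)\oplus\K(\HH)$, and its preimage $\I_2$ consists of all sequences whose scalar parts tend to $0$ (vanishing scalar at $\infty$) and whose $\infty$-value is an arbitrary compact. Since $\I_2$ contains sequences with nonzero scalar parts at finite points (for instance $(\tfrac1n 1_{\widetilde{\K}})_{n}$), while every element of $CCR(\mathcal A)$ has all scalar parts $0$, we get $CCR(\mathcal A)\subsetneq\I_2$.

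Combining, $\I_1\subsetneq CCR(\mathcal A)\subsetneq\I_2$, so $CCR(\mathcal A)$ lies strictly between two consecutive terms of the strictly increasing Cantor--Bendixson sequence and therefore equals no $\I_\alpha$, which is the assertion. The step I expect to require the most care is the precise identification of $\mathcal A/\I_1$ and of its atomic ideal: one must check that the scalar directions at the isolated points genuinely survive as \emph{new} minimal projections of $\mathcal A/\I_1$ (they do, since $\chi_{\{n\}}\otimes 1_{\widetilde{\K}}$ has non-compact value at $n$ and so is not absorbed into $\I_1$). It is exactly this extra atomic layer---absent from the naive unitization $\widetilde{C(\omega+1)\otimes\K}$, where one instead finds $CCR=\I_2$---that forces $\I_2$ to overshoot $CCR(\mathcal A)$.
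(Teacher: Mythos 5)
Your proposal is correct, and it is built on the paper's own example $\A=C(\omega+1)\otimes\widetilde{\K(\ell_2)}$ with the same skeleton: $\I_1=c_0\otimes\K(\ell_2)$, elements like $\chi_{\omega+1}\otimes q$ witnessing $CCR(\A)\not\subseteq\I_1$, and $\chi_{\{n\}}\otimes 1$ witnessing $\I_2\not\subseteq CCR(\A)$. The one genuine divergence is in how $CCR(\A)$ is identified: the paper proves $\chi_{\omega+1}\otimes q\in CCR(\A)$ by classifying all irreducible representations of $\A$ through its center $C(\omega+1)$ (each must factor through a fibre $\widetilde{\K(\ell_2)}$), whereas you pin down $CCR(\A)=C(\omega+1)\otimes\K(\ell_2)$ in closed form by showing this ideal is CCR (a continuous field of compacts) and that any strictly larger ideal absorbs a non-CCR corner $\chi_{\{m\}}\otimes\widetilde{\K(\ell_2)}$, hence cannot be CCR since the CCR property passes to ideals. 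Both routes are sound; yours avoids the representation-theoretic analysis and yields the cleaner sandwich $\I_1\subsetneq CCR(\A)\subsetneq\I_2$, which immediately excludes every term of the increasing Cantor--Bendixson sequence, while the paper's computation additionally records that $ht(\A)=3$ and $\A/\I_2\cong\C$.
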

\begin{proof} Let $\A$ be the algebra $C(\omega+1, \widetilde{\K(\ell_2))}\cong
 C(\omega+1)\otimes\widetilde{\K(\ell_2)})$
of all continuous complex-valued functions on the one point compactification of the countable infinite
discrete space into the unitization of the algebra of compact operators on 
a separable Hilbert space. Let $(I_\alpha)_{\alpha\leq ht(\A)}$ be its Cantor-Bendixson sequence of $\A$.
Note that $\{f\in \A: f(n)\in \K(\ell_2)),\ f(\omega)=0\}\cong c_0\otimes\K(\ell_2)$ 
is an essential ideal isomorphic to the countable direct sum of the algebras 
of compact operators and so it is $\I_1=\I^{At}(\A)$ by Proposition \ref{essential-compact}.

Moreover
$[p_n]_{\I^{At}(\A)}$ for $p_n=\chi_{\{n\}}\otimes Id_{\B(\ell_2)}$ is a minimal projection in $A/{\I^{At}(\A)}$
for each $n\in \omega$, so $p_n\in \I_2\setminus\I_1$. 
However $p_n\not\in CCR(\A)$ as the irreducible representation $\pi_n:\A\rightarrow \B(\ell_2)$
given by $\pi(a)=a(n)$ sends $p_n$ to a noncompact operator. 

As for any one-dimensional projection $q\in \K(\ell_2)$ the element $[a_q]_{\I^{At}(\A)}$  is 
a minimal projection in  $\A/{\I^{At}(\A)}$ as well, where $a_q=\chi_{\omega+1}\otimes q$,
it follows that $\A/\I_2$ is isomorphic to $\C$ and so $\I_3=\A$ and $ht(\A)=3$.

Now consider any irreducible representation $\pi: \A\rightarrow \B(\HH)$ for some Hilbert space $\HH$
aiming at proving that $a_q$ is in $CCR(\A)$ for each  one-dimensional projection $q\in \K(\ell_2)$.
Consider the canonical copy of $C(\omega+1)$ in $\A$ which is the center of $\A$
so ends up in the center of $\pi[\A]$. But the irreducibility of
$\pi$ gives that the commutator of $\pi[\A]$ is in $\C Id_{\HH}$ (5.1.5 \cite{murphy}),
so $C(\omega+1)$ is sent into $\C Id_{\HH}$. It follows that
 $Ker(\pi)\cap C(\omega+1)$ is 
 one of the ideals $\{f\in C(\omega+1): f(\xi)=0\}$ for some $\xi\in \omega+1$,  but
it contains an approximate unit of 
the ideal $\{f\in \A: f(\xi)=0\}$, and so  $Ker(\pi)$
 is included  in  $\{f\in \A: f(\xi)=0\}$ for some $\xi\in \omega+1$. It follows that $\pi$ corresponds to one of the 
irreducible representations of $\A/\{f\in \A: f(\xi)=0\}\equiv \widetilde{\K(\ell_2)}$, but
all such representations send $a_q$ onto a compact operator so $a_q$ is in $CCR(\A)$ as required.

It follows that $CCR(\A)$ cannot  be any of the ideals  $\I_1$, $\I_2$
nor  $\I_3$.

\end{proof}

One should note that being a scattered $C^*$-algebra is  equivalent to many
other conditions which as in the commutative version,  that is Theorem \ref{ec-banach},
are concerned with the dual and the bidual of the algebra:

\begin{theorem}[\cite{jensen1}, \cite{jensen2}]\label{ecStar-banach}
Suppose that $\mathcal A$ is a $C^*$-algebra and $\A''$ is its enveloping
von Neumann algebra.  The following
conditions are equivalent  for $\A$ to be scattered:
\begin{enumerate}
\item\label{ec-enveloping-algebra} $\mathcal A''= \prod_{\alpha<\kappa} \mathcal B(\mathcal H_\alpha)$
for some cardinal $\kappa$.
\item\label{ec-projectionsvN} Each non-zero projection in $\mathcal A''$ majorizes a minimal projection in $\mathcal A$.
\item \label{ec-non-degenerate-representations} Every non-degenerate representation of $\mathcal A$ is a
sum of irreducible representations.
\item\label{ec-atomic-states} Every positive functional  $\mu$ on $\mathcal A$
is of the form $\Sigma_{n\in \N}t_n\mu_n$ where $\mu_n$s are
pure states and $t_n \in \R^+ \cup \{0\}$ are such that $\Sigma_{n\in \N} t_n <\infty$.
\item \label{ec-trace-class}  The dual of $\mathcal A$ is isomorphic to the $\ell_1$-sum
 $(\bigoplus_{\alpha<\beta} \mathcal TC(\mathcal H_\alpha))_{\ell_1}$
where $TC(\mathcal H)$ denotes the trace class operators on a Hilbert space $\mathcal H$.
\item \label{subalgebra-conjugate} The dual spaces $\mathcal C^*$ of separable subalgebras
 $\mathcal C\subseteq \mathcal A$ are separable.
\item \label{subalgebra-spectrum} The spectrum spaces $\hat {\mathcal C}$
of separable subalgebras $\mathcal C\subseteq \mathcal A$
are countable.
\end{enumerate}
\end{theorem}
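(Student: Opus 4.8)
The plan is to treat scatteredness together with the two ``separable'' conditions \ref{subalgebra-conjugate}, \ref{subalgebra-spectrum} as one cluster, the four ``global'' conditions \ref{ec-enveloping-algebra}--\ref{ec-trace-class} (describing $\A''$, its normal states, and the predual) as another, and to run the single cycle scattered $\Rightarrow$ \ref{ec-non-degenerate-representations} $\Rightarrow$ \ref{ec-enveloping-algebra} $\Rightarrow$ \ref{ec-trace-class} $\Rightarrow$ \ref{subalgebra-conjugate} $\Rightarrow$ scattered, hanging \ref{ec-projectionsvN}, \ref{ec-atomic-states} and \ref{subalgebra-spectrum} off it as easy equivalents. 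First I would prove scattered $\Rightarrow$ \ref{ec-non-degenerate-representations}. Given a non-degenerate $\pi\colon\A\to\mathcal B(\HH)$, use Zorn's lemma to choose a maximal family of pairwise orthogonal closed $\pi[\A]$-invariant subspaces on which $\pi$ is irreducible, and let $\HH_0$ be the closure of their sum. If $\HH_0\neq\HH$, then $\pi'=\pi|_{\HH_0^\perp}$ is a non-degenerate subrepresentation and $\pi'[\A]\cong\A/\ker\pi'$ is scattered by Proposition \ref{sub}; hence $\I^{At}(\A/\ker\pi')\neq\{0\}$ yields a minimal projection $q$, whose image under the induced faithful representation of $\A/\ker\pi'$ on $\HH_0^\perp$ is nonzero. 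Lemma \ref{irreducible-subrepresentations} then produces an irreducible invariant subspace inside $\HH_0^\perp$, contradicting maximality; so $\pi$ is the sum of its irreducible restrictions.

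The global block is then essentially von Neumann algebra bookkeeping plus predual duality. Applying \ref{ec-non-degenerate-representations} to the universal representation, $\A''=\pi_u[\A]''$ is the weak closure of a sum of irreducibles, and grouping unitarily equivalent ones gives $\A''=\prod_\alpha\mathcal B(\HH_\alpha)$, which is \ref{ec-enveloping-algebra}; condition \ref{ec-projectionsvN} is just the order-theoretic characterization of atomicity of such a von Neumann algebra. Condition \ref{ec-trace-class} is the predual reformulation: since $\A^*=(\A'')_{*}$, the identification $\A''=\prod_\alpha\mathcal B(\HH_\alpha)$ dualizes to $\A^*=(\bigoplus_\alpha TC(\HH_\alpha))_{\ell_1}$ and conversely, and here I would be careful to check that these Banach isomorphisms are the genuine von Neumann predual identifications. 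Condition \ref{ec-atomic-states} is the statement that every normal state on the atomic algebra $\A''$ is a countable convex combination of normal pure states: diagonalizing the density operators in $\bigoplus_\alpha TC(\HH_\alpha)$ produces exactly a decomposition $\mu=\Sigma_n t_n\mu_n$ into pure states, while for the converse one extends a normal state supported on a hypothetical continuous summand of $\A''$ and observes that its atomic decomposition would force it onto the atomic part, a contradiction.

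To close the cycle I would pass through the separable conditions. The implication \ref{ec-trace-class} $\Rightarrow$ \ref{subalgebra-conjugate} I expect from Banach space theory: each $TC(\HH_\alpha)$ has the Radon--Nikodym property, an $\ell_1$-sum of RNP spaces again has the RNP, so $\A^*$ has the RNP and $\A$ is Asplund, which is precisely \ref{subalgebra-conjugate} (see the circle of ideas in \cite{dgz}). Finally \ref{subalgebra-conjugate}, and likewise \ref{subalgebra-spectrum}, imply scattered by reduction to the commutative case: for a separable commutative $\mathcal C=C_0(X)\subseteq\A$ with $X$ second countable, separability of $\mathcal C^*=M(X)$ (respectively countability of $\hat{\mathcal C}=X$) forces $X$ countable, hence scattered by Theorem \ref{eq-compact-scattered}, and Proposition \ref{commutative-af} upgrades this to scatteredness of $\A$. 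The reverse implications scattered $\Rightarrow$ \ref{subalgebra-conjugate},\ref{subalgebra-spectrum} are immediate: by Proposition \ref{sub} a separable subalgebra is scattered, its composition series from Theorem \ref{theorem1} is countable with one-point spectra, and \ref{ec-trace-class} then gives a separable dual and Theorem \ref{theorem1} a countable spectrum.

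The step I expect to fight hardest with is exactly this passage from the global conditions back to scatteredness. The naive route---restricting an atomic functional on $\A$ to a commutative subalgebra and hoping the pieces stay atomic---fails, because a pure state of $\A$ can restrict to a diffuse measure on a commutative subalgebra (as vector states of $\mathcal B(\HH)$ do on a multiplication masa), so a non-atomic subalgebra can sit weakly densely inside an atomic $\A''$. My plan deliberately never restricts states: the sole bridge from the global side to the definition is \ref{ec-trace-class} $\Rightarrow$ \ref{subalgebra-conjugate} through the Radon--Nikodym property, after which the commutative reduction of Proposition \ref{commutative-af} finishes the argument. Making the Asplund/RNP equivalence and the ``$\ell_1$-sum preserves RNP'' step rigorous for a possibly uncountable index set, and confirming the predual identifications in \ref{ec-trace-class}, are the points I would write out in full detail.
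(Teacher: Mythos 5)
The paper does not actually prove Theorem \ref{ecStar-banach}: it is stated for completeness and attributed to \cite{jensen1, jensen2} (with the Radon--Nikodym/Asplund circle of ideas credited elsewhere to \cite{wojtaszczyk, chu-rn}), so there is no internal proof to compare yours against. Judged on its own, your architecture is sound. The entry point scattered $\Rightarrow$ (\ref{ec-non-degenerate-representations}) via Zorn's lemma, Proposition \ref{sub} and Lemma \ref{irreducible-subrepresentations} is correct and reuses the paper's machinery exactly as intended; the bookkeeping (\ref{ec-non-degenerate-representations}) $\Rightarrow$ (\ref{ec-enveloping-algebra}) $\Rightarrow$ (\ref{ec-trace-class}) $\Rightarrow$ (\ref{ec-atomic-states}) is standard; and your diagnosis of why one cannot return from the bidual conditions to scatteredness by restricting atomic states to commutative subalgebras is exactly right -- the RNP/Asplund bridge (\ref{ec-trace-class}) $\Rightarrow$ (\ref{subalgebra-conjugate}) followed by Proposition \ref{commutative-af} is a legitimate (essentially Wojtaszczyk-style) way to close the cycle, provided you do write out the separable determination of the RNP for uncountable $\ell_1$-sums and the identification $\A^*=(\A'')_*$ as you promise.

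The one step I would push back on is your dismissal of (\ref{ec-projectionsvN}) as ``just the order-theoretic characterization of atomicity.'' Atomicity of $\A''$ says every non-zero projection of $\A''$ majorizes a minimal projection \emph{of $\A''$}; the condition as printed asks for a minimal projection \emph{of $\A$}, and these are not the same statement. Indeed, read literally the printed condition fails for the scattered algebra $\A=C(\omega+1)$: here $\A''=\ell_\infty(\omega+1)$, the projection $\chi_{\{\omega\}}\in\A''$ is non-zero, but the minimal projections of $\A$ are the $\chi_{\{n\}}$ for $n<\omega$ and none of them lies below $\chi_{\{\omega\}}$. So either the intended reading is ``minimal projection in $\A''$'' (the version your sketch actually establishes, and almost certainly what Jensen proves), or some additional interpretation of ``majorizes'' is meant; in either case you cannot leave (\ref{ec-projectionsvN}) as a one-line corollary of (\ref{ec-enveloping-algebra}) without first pinning down which statement you are proving. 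Everything else in the proposal is a matter of filling in standard details rather than of a missing idea.
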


\section{Cantor-Bendixson derivatives of tensor products}

Despite the following theorem of C. Chu it is unclear how to calculate
the Cantor-Bendixson sequence of the tensor product of scattered $C^*$-algebras. 
Note that even in the commutative case $(K\times L)'$ is not $K'\times L'$,
for $K, L$ compact,
however, $K\times L\setminus (K\times L)'=(K\setminus K')\times (L\setminus L')$.

\begin{theorem}[\cite{chu-crossed}] \label{tensor-scattered} 
Let $\A$ and $\B$ be $C^*$-algebras.
The maximal tensor product $\A \otimes_{\max} \B$ is scattered if and
only if $\A$ and $\B$ are scattered.
\end{theorem}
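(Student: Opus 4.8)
The plan is to prove the nontrivial implication --- that $\A,\B$ scattered forces $\A\otimes_{\max}\B$ scattered --- by transporting the composition series of Theorem \ref{theorem1}(\ref{ec-sequence-old}) through the functor $-\otimes_{\max}\B$, and to dispatch the converse by a direct spectral computation. Two standard tensorial facts will be invoked throughout: for every $C^*$-algebra $\B$ the functor $-\otimes_{\max}\B$ is \emph{exact} (an ideal $\J$ of $\A$ yields a short exact sequence $0\to\J\otimes_{\max}\B\to\A\otimes_{\max}\B\to(\A/\J)\otimes_{\max}\B\to 0$) and it commutes with increasing unions of ideals, so that the ideal chains it produces are continuous in the sense of this paper.

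For the converse, assume $\A,\B\neq\{0\}$ and that $\A\otimes_{\max}\B$ is scattered; I would show $\A$ is scattered, the case of $\B$ being symmetric. Since $\A\otimes_{\min}\B$ is a quotient of $\A\otimes_{\max}\B$ under the canonical surjection, Proposition \ref{sub} shows $\A\otimes_{\min}\B$ is scattered. Now fix a self-adjoint $a\in\A$ and any nonzero positive $b\in\B$. By injectivity of the spatial tensor product the abelian subalgebra $C^*(a)\otimes C^*(b)\cong C_0\big((\mathrm{sp}(a)\setminus\{0\})\times(\mathrm{sp}(b)\setminus\{0\})\big)$ embeds into $\A\otimes_{\min}\B$, and there $a\otimes b$ is the function $(\lambda,\mu)\mapsto\lambda\mu$; by spectral permanence its spectrum in $\A\otimes_{\min}\B$ contains $\{\lambda\mu:\lambda\in\mathrm{sp}(a),\ \mu\in\mathrm{sp}(b)\}$. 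As $\|b\|\in\mathrm{sp}(b)$ with $\|b\|>0$, this contains $\|b\|\cdot\mathrm{sp}(a)$, so if $\mathrm{sp}(a)$ were uncountable then $a\otimes b$ would have uncountable spectrum, contradicting Theorem \ref{theorem1}(\ref{ec-spectrum}) for the scattered algebra $\A\otimes_{\min}\B$. Hence every self-adjoint element of $\A$ has countable spectrum and $\A$ is scattered.

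For the main direction I would argue in two layers. First a \emph{Key Lemma}: if $\B$ is scattered then $\K(\HH)\otimes\B$ is scattered for every Hilbert space $\HH$ (here $\K(\HH)$ is nuclear, so the tensor norm is unique). Indeed, taking a composition series $(\J_\beta)_{\beta\leq m(\B)}$ for $\B$ as in Theorem \ref{theorem1}(\ref{ec-sequence-old}) and applying $\K(\HH)\otimes-$, exactness and continuity yield a continuous increasing chain $(\K(\HH)\otimes\J_\beta)_\beta$ of ideals of $\K(\HH)\otimes\B$ whose consecutive quotients are $\K(\HH)\otimes(\J_{\beta+1}/\J_\beta)\cong\K(\HH)\otimes\K(\HH'_\beta)\cong\K(\HH\otimes\HH'_\beta)$, i.e.\ elementary; so $\K(\HH)\otimes\B$ satisfies Theorem \ref{theorem1}(\ref{ec-sequence-old}) and is scattered. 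Second, with $\A$ scattered and composition series $(\J_\alpha)_{\alpha\leq m(\A)}$ having elementary quotients $\K(\HH_\alpha)$, I set $\mathcal{L}_\alpha=\J_\alpha\otimes_{\max}\B$; exactness and continuity make $(\mathcal{L}_\alpha)_{\alpha\leq m(\A)}$ a continuous increasing chain of ideals of $\A\otimes_{\max}\B$ from $\{0\}$ to $\A\otimes_{\max}\B$ with $\mathcal{L}_{\alpha+1}/\mathcal{L}_\alpha\cong\K(\HH_\alpha)\otimes\B$, which is scattered by the Key Lemma. Pulling back each such quotient's elementary composition series through the quotient maps and concatenating lexicographically --- exactly as in the proof of (\ref{ec-sequence-new})$\Rightarrow$(\ref{ec-sequence-old}) of Theorem \ref{theorem1} --- produces one continuous chain of ideals of $\A\otimes_{\max}\B$ with elementary subquotients, so $\A\otimes_{\max}\B$ is scattered.

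The conceptual steps are routine once the two tensorial facts are granted, so the main obstacle is precisely their justification: the exactness of $-\otimes_{\max}\B$ (equivalently, that $\J\otimes_{\max}\B$ injects as an ideal with the expected quotient) and its compatibility with inductive limits. This is exactly the point where the maximal rather than the minimal tensor product is essential, since $-\otimes_{\min}\B$ is exact only when $\B$ is an exact $C^*$-algebra; I would rely on the standard treatment via the universal property of $\otimes_{\max}$. The only auxiliary verifications are the identification $\K(\HH_1)\otimes\K(\HH_2)\cong\K(\HH_1\otimes\HH_2)$ and the spectral-permanence computation of $\mathrm{sp}(a\otimes b)$ used in the converse, both of which are elementary.
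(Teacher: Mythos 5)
The paper does not actually prove Theorem \ref{tensor-scattered}: it is quoted from Chu's paper \cite{chu-crossed} without proof (Chu's argument runs through the dual-space/atomic-state characterizations of the kind recorded in Theorem \ref{ecStar-banach}). Your proposal is therefore a genuinely different, self-contained route, and as far as I can see it is correct. Working with condition (\ref{ec-sequence-old}) of Theorem \ref{theorem1} and pushing the elementary composition series of $\A$ through $-\otimes_{\max}\B$ is exactly in the spirit of what the paper does elsewhere for the special case $\B=\K(\ell_2)$ (Lemma \ref{atoms-tensor-product}, Proposition \ref{cb-tensor}), and your two-layer decomposition with the Key Lemma for $\K(\HH)\otimes\B$ plus lexicographic concatenation mirrors the proof of (\ref{ec-sequence-new})$\Rightarrow$(\ref{ec-sequence-old}). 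The load-bearing facts are the ones you flag: that for an ideal $\J\subseteq\A$ the sequence $0\to\J\otimes_{\max}\B\to\A\otimes_{\max}\B\to(\A/\J)\otimes_{\max}\B\to 0$ is exact (in particular $\J\otimes_{\max}\B$ really sits inside $\A\otimes_{\max}\B$ as an ideal), and compatibility with increasing unions; both are standard for the maximal tensor product (e.g.\ II.9.6.6 of \cite{Blackadar}), and you correctly identify this as the place where $\otimes_{\max}$ rather than $\otimes_{\min}$ is essential. The converse via the spectrum of $a\otimes b$ and condition (\ref{ec-spectrum}) is fine, modulo the harmless caveat that one must assume both factors are nonzero; an equivalent shortcut would be to use condition (\ref{ec-no-interval}), since a copy of $C_0((0,1])$ in $\A$ yields one in the quotient $\A\otimes_{\min}\B$ via $f\mapsto f\otimes b$ for a fixed nonzero positive $b$. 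What your approach buys over citing Chu is a proof entirely internal to the paper's Cantor--Bendixson machinery, at the cost of importing the exactness of $\otimes_{\max}$; what it does not yield (and what the paper laments just before the theorem) is any computation of the Cantor--Bendixson sequence of $\A\otimes_{\max}\B$, since the composition series you build is not the canonical one.
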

By Proposition \ref{real-rank-zero} (3)  every scattered $C^*$-algebra 
is GCR (or type $I$ or postliminar; see \cite[Theorem 1.5.5]{invitation} and page 169 of \cite{murphy}). Therefore every 
scattered $C^*$-algebra $\A$ is nuclear (e.g., \cite[IV.3.1.3]{Blackadar}), i.e., for every $C^*$-algebra $\B$ 
there is a unique $C^*$-tensor product $\A \otimes \B$.

\begin{lemma}\label{atoms-tensor-product}
Assume $\mathcal A_1$ and $\mathcal A_2$ are two scattered $C^*$-algebras. Then 
$$ \I^{At}(\mathcal A_1 \otimes \mathcal A_2)= \I^{At}(\mathcal A_1) \otimes \I^{At}(\mathcal A_2).$$
\end{lemma}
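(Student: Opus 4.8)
The plan is to realise $\I^{At}(\A_1)\otimes\I^{At}(\A_2)$ as an essential ideal of $\A_1\otimes\A_2$ that embeds into the compact operators, and then to invoke Proposition \ref{essential-compact}, which says precisely that such an ideal must coincide with $\I^{At}(\A_1\otimes\A_2)$. Write $D_i=\I^{At}(\A_i)$ for $i=1,2$. Since each $\A_i$ is scattered, $D_i$ is an essential ideal of $\A_i$ by Proposition \ref{scattered-are-atomic}, and it is isomorphic to a subalgebra of $\K(\HH_i)$ for some Hilbert space $\HH_i$ by Proposition \ref{atoms-are-compact}; I would choose this embedding to be nondegenerate. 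Using Proposition \ref{embedding-scattered} I would then extend it to a faithful representation $\A_i\subseteq\B(\HH_i)$ with $D_i\subseteq\K(\HH_i)$ an ideal of $\A_i$ acting nondegenerately on $\HH_i$. Because scattered algebras are nuclear (as noted after Theorem \ref{tensor-scattered}), $\A_1\otimes\A_2$ carries a unique $C^*$-norm and is realised spatially on $\HH_1\otimes\HH_2$ through $\A_1\otimes\A_2\subseteq\B(\HH_1)\otimes\B(\HH_2)\subseteq\B(\HH_1\otimes\HH_2)$; inside it, $D_1\otimes D_2$ is the closed subalgebra generated by the elementary tensors $b_1\otimes b_2$ with $b_i\in D_i$.

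With this concrete picture in place, the three required properties of $D_1\otimes D_2$ are checked as follows. First, it is an ideal of $\A_1\otimes\A_2$: on elementary tensors $(a_1\otimes a_2)(b_1\otimes b_2)=(a_1b_1)\otimes(a_2b_2)\in D_1\otimes D_2$ since each $D_i$ is an ideal of $\A_i$, and this passes to the closure by continuity of multiplication (and symmetrically on the right). Second, it embeds into the compacts: under the above realisation $D_1\otimes D_2\subseteq\K(\HH_1)\otimes\K(\HH_2)\cong\K(\HH_1\otimes\HH_2)$, using the standard identification of the tensor product of the compacts with the compacts on $\HH_1\otimes\HH_2$, together with the injectivity of the minimal tensor product. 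Third, $D_1\otimes D_2$ is essential in $\A_1\otimes\A_2$: since $D_i$ acts nondegenerately, $\overline{D_i\HH_i}=\HH_i$, so the vectors $(b_1\xi_1)\otimes(b_2\xi_2)$ span a dense subspace and $D_1\otimes D_2$ acts nondegenerately on $\HH_1\otimes\HH_2$; hence any $z\in\B(\HH_1\otimes\HH_2)$ with $z(D_1\otimes D_2)=0$ annihilates the dense subspace $(D_1\otimes D_2)(\HH_1\otimes\HH_2)$ and therefore vanishes, giving $(D_1\otimes D_2)^\perp=\{0\}$ inside $\A_1\otimes\A_2$.

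Putting these together, $D_1\otimes D_2$ is an essential ideal of $\A_1\otimes\A_2$ isomorphic to a subalgebra of compact operators, so Proposition \ref{essential-compact} yields $\I^{At}(\A_1)\otimes\I^{At}(\A_2)=D_1\otimes D_2=\I^{At}(\A_1\otimes\A_2)$, as desired.

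The step I expect to be the main obstacle is essentiality: it is where one must be careful that the tensor products of the algebras, of the ideals, and of the ambient $\B(\HH_i)$ are all taken consistently within a single spatial realisation, and that the nondegeneracy of each $D_i$ can genuinely be arranged, so that nondegeneracy — and with it essentiality — is inherited by the tensor product. As a cross-check on the inclusion $D_1\otimes D_2\subseteq\I^{At}(\A_1\otimes\A_2)$, one can argue directly at the level of atoms: if $p_i$ is a minimal projection of $\A_i$, then $(p_1\otimes p_2)(\A_1\otimes\A_2)(p_1\otimes p_2)=\C(p_1\otimes p_2)$, because $p_i\A_ip_i=\C p_i$ forces the corner map to send every elementary tensor into $\C(p_1\otimes p_2)$ and this closed one-dimensional space absorbs the closure; thus $p_1\otimes p_2$ is a minimal projection of $\A_1\otimes\A_2$ and lies in $\I^{At}(\A_1\otimes\A_2)$.
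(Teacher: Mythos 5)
Your argument is correct, and it reaches the same structural conclusion as the paper (an essential ideal of $\A_1\otimes\A_2$ sitting inside the compacts must be $\I^{At}(\A_1\otimes\A_2)$), but the key step — essentiality of $\I^{At}(\A_1)\otimes\I^{At}(\A_2)$ — is handled by a genuinely different mechanism. The paper stays at the algebraic level: it first records that $p_1\otimes p_2$ is minimal whenever each $p_i$ is (exactly your cross-check, which for the paper is the actual inclusion $\I^{At}(\A_1)\otimes\I^{At}(\A_2)\subseteq\I^{At}(\A_1\otimes\A_2)$ needed to apply Proposition \ref{essential-maximal}), and then proves essentiality by quoting the fact that every nonzero closed ideal of a minimal tensor product contains a nonzero elementary tensor $x_1\otimes x_2$ (Blanchard--Kirchberg), after which essentiality of each factor produces $a_i\in\I^{At}(\A_i)$ with $a_1x_1\otimes a_2x_2\neq 0$ in the intersection. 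You instead build a concrete spatial picture: a nondegenerate faithful representation of each $\I^{At}(\A_i)$ inside $\K(\HH_i)$, extended to $\A_i\subseteq\B(\HH_i)$ via Proposition \ref{embedding-scattered}, and you derive essentiality from nondegeneracy of $\I^{At}(\A_1)\otimes\I^{At}(\A_2)$ on $\HH_1\otimes\HH_2$ together with the criterion $\I^\perp=\{0\}$. Your route avoids the Blanchard--Kirchberg input at the cost of some representation-theoretic bookkeeping: one must check that the nondegenerate embedding of $\bigoplus_j\K(\HH_{ij})$ exists (it does, on the Hilbert sum of the $\HH_{ij}$), that Proposition \ref{embedding-scattered} applies to it, and that nuclearity lets you compute the unique tensor norm spatially — all of which you address. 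The paper's route is shorter and representation-free but leans on a less elementary external lemma; yours is self-contained modulo standard facts about spatial tensor products, and the nondegeneracy argument is reusable (it is essentially the same device the paper itself invokes later in the proof of Lemma \ref{extension-compact} when it refers back to this proof for the essentiality of $\A\otimes\K(\ell_2)$ in $\widetilde{\A}\otimes\K(\ell_2)$). Finally, note that your appeal to Proposition \ref{essential-compact} quietly subsumes the inclusion step the paper does by hand, since that proposition is itself proved via Propositions \ref{atoms-are-max-ideal} and \ref{essential-maximal}; so your ``cross-check'' via minimal projections is not logically needed, though it is the honest content behind the containment $\I^{At}(\A_1)\otimes\I^{At}(\A_2)\subseteq\I^{At}(\A_1\otimes\A_2)$.
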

\begin{proof}
Let $p_i$ be a minimal projection in $\mathcal A_i$ for $i=1,2$. Then 
$$p_1 \otimes p_2 (\mathcal A_1 \otimes \mathcal A_2) p_1 \otimes p_2 = 
p_1 \mathcal A_1  p_1 \otimes p_2 \mathcal A_2 p_2= \mathbb C (p_1  \otimes p_2). $$
Therefore $p_1 \otimes p_2$ is a minimal projection in $\mathcal A_1 \otimes \mathcal A_2$ and 
hence $ \I^{At}(\mathcal A_1) \otimes \I^{At}(\mathcal A_2) \subseteq \I^{At}(\mathcal A_1 \otimes \mathcal A_2)$.

Since every scattered $C^*$-algebra is atomic (Proposition \ref{scattered-are-atomic}) we know that
$\I^{At}(\A_1)$ and $\I^{At}(\A_2)$ are essential ideals of $\A_1$ and $\A_2$, respectively.
If we show that $\I^{At}(\A_1) \otimes \I^{At}(\A_2)$ is an essential ideal of $\A_1 \otimes \A_2$
then by the above and Proposition \ref{essential-maximal} we have 
$\I^{At}(\mathcal A_1) \otimes \I^{At}(\mathcal A_2) = \I^{At}(\mathcal A_1 \otimes \mathcal A_2)$.
Assume $\J$ is a nonzero  ideal of $\A_1 \otimes \A_2$. It is known that $\J$ contains a nonzero
elementary tensor product $x_1 \otimes x_2$, where $x_1 \in \A_1$ and $x_2 \in \A_2$
(this is true in general for nonzero
 closed ideals of
 the minimal tensor products of $C^*$-algebras; see for example \cite[Lemma 2.12]{Blanchard-Kirchberg}).
 For $i=1,2$,
since $\I^{At}(\A_i)$ is an essential ideal of $\A_i$, there is $a_i\in \I^{At}(\A_i)$ such that 
$a_i x_i \neq 0$.   
 Then 
$$
 a_1 x_1 \otimes a_2 x_2= (a_1 \otimes a_2) (x_1 \otimes x_2) \neq 0
$$
belongs to $\J\cap \I^{At}(\A_1) \otimes \I^{At}(\A_2)$. Thus $\I^{At}(\A_1) \otimes \I^{At}(\A_2)$ is an essential ideal
of $\A_1 \otimes \A_2$.

\end{proof}

\begin{proposition}\label{cb-tensor} Suppose that $\A$ is a scattered $C^*$-algebra 
with the Cantor-Bendixson sequence $(\I_\alpha)_{\alpha\leq ht(\A)}$.
Then $\A\otimes \K(\ell_2)$ is a scattered $C^*$-algebra 
whose Cantor-Bendixson sequence $(\J_\alpha)_{\alpha\leq ht(\A)}$ satisfies
$\J_\alpha=\I_\alpha\otimes \K(\ell_2)$ for every $\alpha\leq ht(\A\otimes \K(\ell_2))=ht(\A)$.
In particular $\J_{\alpha+1}/\J_\alpha$ is $*$-isomorphic to $(\I_{\alpha+1}/\I_\alpha)\otimes \K(\ell_2)$.
\end{proposition}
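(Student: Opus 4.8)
The plan is to establish the identity $\J_\alpha=\I_\alpha\otimes\K(\ell_2)$ by transfinite induction on $\alpha$, after two preliminary observations. First, $\K(\ell_2)$ is itself scattered with $\I^{At}(\K(\ell_2))=\K(\ell_2)$, since the compact operators are generated as a $C^*$-algebra by their rank-one (minimal) projections; moreover $\K(\ell_2)$ is nuclear, so $\A\otimes\K(\ell_2)$ is the unique $C^*$-tensor product and, by Theorem \ref{tensor-scattered}, it is scattered as a tensor product of two scattered algebras. Hence its Cantor--Bendixson sequence $(\J_\alpha)$ is well defined, and the induction makes sense.

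The key algebraic tool I would isolate is the following exactness statement: for any closed ideal $\I\subseteq\A$ the sequence
$$0\longrightarrow \I\otimes\K(\ell_2)\longrightarrow \A\otimes\K(\ell_2)\longrightarrow (\A/\I)\otimes\K(\ell_2)\longrightarrow 0$$
is exact, so that $(\A\otimes\K(\ell_2))/(\I\otimes\K(\ell_2))\cong (\A/\I)\otimes\K(\ell_2)$ canonically via $\sigma\otimes\mathrm{id}$, where $\sigma\colon\A\to\A/\I$ is the quotient map. This rests on the fact that $\K(\ell_2)$ is nuclear, hence exact, and that minimally tensoring with an exact $C^*$-algebra preserves short exact sequences. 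Applied with $\A$ replaced by $\I_{\alpha+1}$ and $\I$ by $\I_\alpha$ it also yields the final ``in particular'' clause, namely $(\I_{\alpha+1}\otimes\K(\ell_2))/(\I_\alpha\otimes\K(\ell_2))\cong(\I_{\alpha+1}/\I_\alpha)\otimes\K(\ell_2)$.

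With these in hand the induction runs smoothly. The base case $\J_0=0=\I_0\otimes\K(\ell_2)$ is trivial. At a successor $\alpha+1$, assuming $\J_\alpha=\I_\alpha\otimes\K(\ell_2)$, I identify the quotient $(\A\otimes\K(\ell_2))/\J_\alpha$ with $(\A/\I_\alpha)\otimes\K(\ell_2)$ via the exactness statement, and then compute
$$\J_{\alpha+1}/\J_\alpha=\I^{At}\big((\A/\I_\alpha)\otimes\K(\ell_2)\big)=\I^{At}(\A/\I_\alpha)\otimes\I^{At}(\K(\ell_2))=(\I_{\alpha+1}/\I_\alpha)\otimes\K(\ell_2),$$
using Lemma \ref{atoms-tensor-product} (legitimate since $\A/\I_\alpha$ is scattered by Proposition \ref{sub}) together with $\I^{At}(\A/\I_\alpha)=\I_{\alpha+1}/\I_\alpha$ and $\I^{At}(\K(\ell_2))=\K(\ell_2)$. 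To recover $\J_{\alpha+1}$ itself I pull this ideal back along $\sigma_\alpha\otimes\mathrm{id}$; factoring the quotient map $\A\otimes\K(\ell_2)\to(\A/\I_{\alpha+1})\otimes\K(\ell_2)$ through $\sigma_\alpha\otimes\mathrm{id}$ and invoking exactness for both factors gives $(\sigma_\alpha\otimes\mathrm{id})^{-1}((\I_{\alpha+1}/\I_\alpha)\otimes\K(\ell_2))=\I_{\alpha+1}\otimes\K(\ell_2)$, as desired. For a limit ordinal $\gamma$ I would use continuity of both sequences: $\J_\gamma=\overline{\bigcup_{\alpha<\gamma}\I_\alpha\otimes\K(\ell_2)}$, and since every elementary tensor $a\otimes k$ with $a\in\I_\gamma=\overline{\bigcup_{\alpha<\gamma}\I_\alpha}$ is a limit of tensors $a'\otimes k$ with $a'\in\bigcup_{\alpha<\gamma}\I_\alpha$, the closed span of such tensors is exactly $\overline{\bigcup_{\alpha<\gamma}\I_\alpha\otimes\K(\ell_2)}$, giving $\I_\gamma\otimes\K(\ell_2)=\J_\gamma$.

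Finally the heights agree: $\J_{ht(\A)}=\A\otimes\K(\ell_2)$ exhausts the algebra, while for $\alpha<ht(\A)$ the quotient $(\I_{\alpha+1}/\I_\alpha)\otimes\K(\ell_2)$ is nonzero because $\I_{\alpha+1}/\I_\alpha\neq 0$, so the sequence strictly increases up to $ht(\A)$ and $ht(\A\otimes\K(\ell_2))=ht(\A)$. The step I expect to require the most care is the exactness statement: one must verify that tensoring the defining short exact sequences with $\K(\ell_2)$ remains exact (this is precisely where nuclearity, equivalently exactness, of $\K(\ell_2)$ is essential) and that this exactness is compatible with pulling back ideals through composite quotient maps, which is what licenses the successor step of the induction.
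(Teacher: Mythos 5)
Your proof is correct and follows essentially the same route as the paper's: transfinite induction on $\alpha$, with Lemma \ref{atoms-tensor-product} driving the successor step via the identification $(\A\otimes\K(\ell_2))/(\I_\alpha\otimes\K(\ell_2))\cong(\A/\I_\alpha)\otimes\K(\ell_2)$, and continuity handling limits. You merely make explicit two points the paper leaves implicit, namely the exactness of tensoring with the nuclear algebra $\K(\ell_2)$ and the verification at limit ordinals, both of which are handled correctly.
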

\begin{proof}
We prove $\J_\alpha=\I_\alpha\otimes \K(\ell_2)$ by induction on $\alpha\leq ht(\A)$.
For $\alpha=1$ this follows from Lemma \ref{atoms-tensor-product}.
At a successor ordinal by
Lemma \ref{atoms-tensor-product} and the 
inductive assumption we have $\J_{\alpha+1}/\J_{\alpha}=\I^{At}\big((\A\otimes \K(\ell_2)\big)/\J_\alpha)
=\I^{At}(\big(\A\otimes\K(\ell_2)\big)/(\I_\alpha\otimes \K(\ell_2)))
=\I^{At}\big((\A/\I_{\alpha})\otimes\K(\ell_2)\big)=\I^{At}(\A/\I_{\alpha})\otimes\K(\ell_2)
=(\I_{\alpha+1}/\I_{\alpha})\otimes\K(\ell_2)$ and so $\J_{\alpha+1}=\I_{\alpha+1}\otimes \K(\ell_2)$.
The limit ordinal case is immediate.
\end{proof}

\section{Fully noncommutative scattered $C^*$-algebras}

\begin{definition}\label{fully} Let $\A$ be a  scattered $C^*$-algebra 
with the Cantor-Bendixson sequence $(\I_\alpha)_{\alpha\leq ht(\A)}$.
$\A$
is called fully noncommutative
if  $\I_{\alpha+1}/\I_\alpha$ is isomorphic to the algebra of all compact operators 
on a Hilbert space, for each $\alpha<ht(\A)$.
\end{definition}

\begin{lemma}\label{all-ideals} Suppose that $\mathcal A$ is a 
fully noncommutative scattered $C^*$-algebra 
with the Cantor-Bendixson sequence $(\I_\alpha)_{\alpha \leq ht(\A)}$ and $\J\subseteq \A$ 
is an ideal of $\A$. Then  $\J=\I_\alpha$ for some $\alpha\leq ht(\A)$.
\end{lemma}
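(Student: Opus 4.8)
The plan is to locate $\J$ inside the Cantor-Bendixson filtration by tracking the largest initial ideal that it contains. First I would set
$\delta$ to be the supremum of the collection $\{\alpha\leq ht(\A):\I_\alpha\subseteq\J\}$. This collection contains $0$; it is downward closed, because $(\I_\alpha)$ is increasing, so that $\I_\beta\subseteq\I_\alpha\subseteq\J$ whenever $\beta<\alpha$ and $\I_\alpha\subseteq\J$; and it is closed under the passage to its own limit points, because the sequence is continuous and $\J$ is norm-closed: if $\gamma$ is a limit of ordinals in the collection then every $\beta<\gamma$ lies in it, whence $\I_\gamma=\overline{\bigcup_{\beta<\gamma}\I_\beta}\subseteq\overline{\J}=\J$. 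Consequently its supremum $\delta$ again belongs to the collection and is its maximum. If $\delta=ht(\A)$ then $\J\supseteq\I_{ht(\A)}=\A$, so $\J=\A=\I_{ht(\A)}$ and we are finished; so from now on assume $\delta<ht(\A)$.

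The second step is where full noncommutativity enters. Since $\A/\I_\delta$ is scattered (Proposition \ref{sub}), its Cantor-Bendixson sequence begins with $\I^{At}(\A/\I_\delta)=\I_{\delta+1}/\I_\delta$, and by Definition \ref{fully} this quotient is isomorphic to $\K(\HH)$ for a \emph{single} Hilbert space $\HH$, hence is simple. Now $(\J\cap\I_{\delta+1})/\I_\delta$ is a closed two-sided ideal of $\I_{\delta+1}/\I_\delta$, using that $\I_\delta\subseteq\J\cap\I_{\delta+1}$, so by simplicity it equals either $\{0\}$ or all of $\I_{\delta+1}/\I_\delta$. The latter would give $\I_{\delta+1}\subseteq\J$, contradicting the maximality of $\delta$; therefore $\J\cap\I_{\delta+1}=\I_\delta$.

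Finally I would pass to the quotient $\A/\I_\delta$, in which $\J/\I_\delta$ is a closed ideal. The identity just obtained yields $(\J/\I_\delta)\cap(\I_{\delta+1}/\I_\delta)=(\J\cap\I_{\delta+1})/\I_\delta=\{0\}$. But $\I_{\delta+1}/\I_\delta=\I^{At}(\A/\I_\delta)$ is an \emph{essential} ideal of $\A/\I_\delta$, since a quotient of a scattered algebra is scattered (Proposition \ref{sub}) and every scattered algebra is atomic (Proposition \ref{scattered-are-atomic}). Essentiality forces any ideal meeting it only in $\{0\}$ to vanish, so $\J/\I_\delta=\{0\}$, that is $\J=\I_\delta$, as required.

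The existence of $\delta$ is the routine part; the crux is the interplay of the last two steps. Simplicity of the consecutive quotient—exactly the content of Definition \ref{fully}, and false in the general scattered case where $\I_{\delta+1}/\I_\delta$ is only a direct sum $\bigoplus_i\K(\HH_i)$ carrying many intermediate ideals—prevents $\J$ from meeting the next layer nontrivially, and essentiality of the atomic ideal then propagates this downward collapse to kill every part of $\J$ lying strictly above $\I_\delta$. I expect the only real subtlety to be confirming that $\I_{\delta+1}/\I_\delta$ is genuinely both simple and essential in the quotient, which is precisely where Definition \ref{fully} and Proposition \ref{scattered-are-atomic} are used.
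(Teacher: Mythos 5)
Your proposal is correct and follows essentially the same route as the paper: identify the maximal $\delta$ with $\I_\delta\subseteq\J$ (the paper phrases this as the minimal $\beta$ with $\I_\beta\not\subseteq\J$, which is necessarily a successor $\alpha+1$ by the same continuity/closedness argument), use simplicity of $\I_{\delta+1}/\I_\delta\cong\K(\HH)$ from Definition \ref{fully} to get $\J\cap\I_{\delta+1}=\I_\delta$, and then use essentiality of $\I^{At}(\A/\I_\delta)=\I_{\delta+1}/\I_\delta$ (via Proposition \ref{scattered-are-atomic}) to conclude $\J=\I_\delta$. No gaps.
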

\begin{proof} Let $\beta\leq ht(\A)$ be the  minimal ordinal such that
$\I_\beta\not\subseteq \J$. If there
is no such $\beta$ we have $\I_{ht(\A)}=\A=\J$.
It follows that $\beta=\alpha+1$ for some $\alpha< ht(\A)$.
So $\sigma_\alpha[\I_{\alpha+1}\cap \J]$ is a proper ideal of $\I_{\alpha+1}/\I_\alpha$,
where $\sigma_\alpha$ is the quotient map.
By Definition \ref{fully} the quotient
$\I_{\alpha+1}/\I_\alpha$ is isomorphic to the algebra of all compact
operators on a Hilbert space, which has no nonzero proper ideals.
So $(\J\cap \I_{\alpha+1})/\I_\alpha$ is the zero ideal.
Moreover by Theorem \ref{theorem1} (\ref{ec-sequence-new}) 
$\I^{At}(\A/\I_\alpha)=\I_{\alpha+1}/\I_\alpha$, which is an essential ideal by
Theorem \ref{theorem1} (\ref{ec-homomorphic}) and 
Proposition \ref{scattered-are-atomic}. Therefore $\J/\I_\alpha=\{0\}$
which implies that $\J=\I_\alpha$, as required.
\end{proof}

\begin{proposition}\label{chain} Suppose that $\A$ is
a scattered $C^*$-algebra. 
The following are equivalent:
\begin{enumerate}
\item $\A$ is fully noncommutative,
\item the ideals of $\A$ form a chain,
\item the centers of the  multipler algebras of
any quotient of $\A$ are all trivial.
\end{enumerate}
\end{proposition}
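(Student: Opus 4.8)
The plan is to establish $(1)\Leftrightarrow(2)$ first and then close the loop through $(1)\Rightarrow(3)\Rightarrow(1)$. The structural fact I would use throughout is that, by Proposition \ref{atoms-are-compact} and the classification of subalgebras of the compacts (1.4.5 of \cite{invitation}), each layer $\I_{\alpha+1}/\I_\alpha=\I^{At}(\A/\I_\alpha)$ is isomorphic to $\bigoplus_{i\in\Lambda_\alpha}\K(\HH_{\alpha,i})$, and that by Proposition \ref{scattered-are-atomic} this layer is an essential ideal of $\A/\I_\alpha$, so $\A/\I_\alpha$ embeds diagonally into $\prod_{i\in\Lambda_\alpha}\B(\HH_{\alpha,i})$. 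Under this picture, being fully noncommutative means exactly that every index set $\Lambda_\alpha$ is a singleton.

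For $(1)\Rightarrow(2)$ I would just quote Lemma \ref{all-ideals}: when $\A$ is fully noncommutative every ideal equals some $\I_\alpha$, and the $(\I_\alpha)_{\alpha\le ht(\A)}$ form an increasing continuous sequence, hence a chain. For $(2)\Rightarrow(1)$ I argue contrapositively. If some $\Lambda_\alpha$ has two distinct elements $i,j$, then because $\A/\I_\alpha$ sits diagonally inside $\prod_i\B(\HH_{\alpha,i})$, each partial sum $\bigoplus_{k\in S}\K(\HH_{\alpha,k})$ with $S\subseteq\Lambda_\alpha$ is an ideal of $\A/\I_\alpha$ (for $b=(b_i)$ and $e$ supported on $S$ the product $be$ is again supported on $S$); the two ideals coming from $S=\{i\}$ and $S=\{j\}$ meet only in $\{0\}$ and so are incomparable, and their preimages under the quotient map are incomparable ideals of $\A$, contradicting $(2)$.

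For $(1)\Rightarrow(3)$ I would first note that, by Lemma \ref{all-ideals}, every quotient of $\A$ is one of the $\A/\I_\alpha$, and each of these is again scattered (Proposition \ref{sub}) and fully noncommutative, with $\I^{At}(\A/\I_\alpha)=\I_{\alpha+1}/\I_\alpha\cong\K(\HH)$ for a single $\HH$. Thus it is enough to see that $Z(M(\B))=\C\,1$ for a fully noncommutative $\B$. Put $\mathcal E=\I^{At}(\B)\cong\K(\HH)$; it is essential in $\B$ and, by Lemma \ref{ideal-in-ideal}, essential in $M(\B)$. A central element $z\in M(\B)$ preserves $\mathcal E$ and commutes with it, hence acts on $\mathcal E\cong\K(\HH)$ as an element of the center of $M(\K(\HH))=\B(\HH)$, i.e.\ as a scalar $\lambda$. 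Then $(z-\lambda 1)\mathcal E=\{0\}$, and essentiality of $\mathcal E$ in $M(\B)$ gives $z=\lambda 1$.

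The implication $(3)\Rightarrow(1)$ is the one I expect to be the main obstacle. Contrapositively, assume some $\Lambda_\alpha$ has at least two elements and set $\B=\A/\I_\alpha$, $\mathcal E=\bigoplus_{i\in\Lambda_\alpha}\K(\HH_{\alpha,i})$; the aim is to exhibit a nontrivial central projection in $M(\B)$. The natural candidate attached to a proper nonempty $S\subseteq\Lambda_\alpha$ is the diagonal projection $P_S=(\chi_S(i)I_i)_i$, which is central in $M(\mathcal E)=\prod_i\B(\HH_{\alpha,i})$ and which one hopes to obtain as a strict limit in $M(\B)$ of an approximate unit of the ideal $\bigoplus_{i\in S}\K(\HH_{\alpha,i})$. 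The delicate point, and the crux of the whole proposition, is that this limit a priori converges only strongly on elements of $\B$ whose $S$-components are non-compact, so $P_S$ may fail to be a multiplier of $\B$; this is exactly the phenomenon one must rule out, since for unital quotients such as $\C\,1+(\K\oplus\K)$ the center of the multiplier algebra collapses to $\C$. Making the argument go through therefore requires controlling this convergence — reducing to a quotient in which the offending layer is forced to be essential, and invoking that scatteredness makes the relevant algebras of real rank zero (Proposition \ref{real-rank-zero}), so that a nontrivial center is already witnessed by a nontrivial projection — and this reduction is the step on which I would spend most of the effort.
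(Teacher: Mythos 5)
Your treatment of $(1)\Leftrightarrow(2)$ coincides with the paper's (Lemma \ref{all-ideals} for the forward direction; two incomparable ideals extracted from a non-simple layer $\bigoplus_{i\in\Lambda_\alpha}\K(\HH_{\alpha,i})$ for the converse), and your proof of $(1)\Rightarrow(3)$ is correct but genuinely different from the paper's: where the paper invokes the Dauns--Hofmann theorem and analyzes the Jacobson topology on the spectrum of $\A/\I_\alpha$, you argue directly that $\mathcal E=\I^{At}(\A/\I_\alpha)\cong\K(\HH)$ is essential in $M(\A/\I_\alpha)$ (via Lemma \ref{ideal-in-ideal} and the essentiality of an algebra in its multiplier algebra), so a central element acts on $\mathcal E$ as an element of $Z(M(\K(\HH)))=\C 1$ and is then pinned down by essentiality. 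This is more elementary and equally valid; the paper's route has the advantage of explaining \emph{why} the spectrum admits only constant continuous functions, which is used elsewhere, but yours avoids Dauns--Hofmann entirely.

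For $(3)\Rightarrow(1)$ you do not give a proof, and you should be aware that your diagnosis of the obstacle is not merely a technical worry: it is fatal to the implication as literally stated. Take $\A=\widetilde{\K(\HH_1)\oplus\K(\HH_2)}$. This is scattered, its Cantor--Bendixson sequence has $\I_1=\K(\HH_1)\oplus\K(\HH_2)$ (Lemma \ref{atoms-unitization}), so $\A$ is not fully noncommutative; yet its only quotients are $\A$ itself, two copies of $\widetilde{\K(\ell_2)}$, $\C$ and $0$, all unital, so each multiplier algebra is the algebra itself and each center is exactly $\C 1$. Thus $(3)$ holds and $(1)$ fails. The paper's own argument for $(3)\Rightarrow(1)$ breaks at precisely the point you flag: the factor projections are central in $M(\I_{\alpha+1}/\I_\alpha)=\prod_{i}\B(\HH_{\alpha,i})$, but $M(\A/\I_\alpha)$ is only a subalgebra of this product and need not contain them --- in the example above it does not, since $M(\A)=\A$. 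Consequently your proposed repairs (forcing the layer to be essential in a further quotient, or using real rank zero to produce a central projection) cannot succeed without changing the statement, e.g.\ by excluding unital quotients or by replacing $M(\A/\I_\alpha)$ with $M(\I_{\alpha+1}/\I_\alpha)$ in condition $(3)$. You were right to spend your effort there; the honest conclusion is that $(3)\Rightarrow(1)$ requires an additional hypothesis, not a cleverer argument.
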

\begin{proof}
The implication (1) $\Rightarrow$ (2)  is Lemma \ref{all-ideals}. 

(2) $\Rightarrow$ (1) Let $(\I_\alpha)_{\alpha\leq ht(\A)}$ be 
the Cantor-Bendixson sequence of $\mathcal A$.
Suppose that one of the quotients $\I_{\alpha+1}/\I_\alpha$ is not  isomorphic to 
the algebra of all compact operators on a Hilbert space. 
Since $\I_{\alpha+1}/\I_\alpha= \I^{At}(\A/ \I_\alpha)$ and by 
Proposition \ref{atoms-are-compact},  $\I_{\alpha+1}/\I_\alpha$
 it is isomorphic to
some subalgebra of all compact operators on a Hilbert space (\cite[Theorem 1.4.5.]{invitation}).
Hence, we may assume that $\I_{\alpha+1}/\I_\alpha$ is isomorphic to
a subalgebra of an algebra of the form $\K(\HH_1)\oplus\K(\HH_2)$ for
some Hilbert spaces $\HH_1$ and $\HH_2$.
Now $\I_\alpha+\K(\HH_1)\oplus\{0\}$ and $\I_\alpha+\{0\}\oplus\K(\HH_2)$
are two incomparable (with respect to inclusion) ideals of $\A$.

(1) $\Rightarrow$ (3)  By Lemma \ref{all-ideals} a nonzero quotient
of $\A$ must be of the form $\A/\I_\alpha$ for some $\alpha<ht(\A)$.
By the Dauns-Hofmann theorem (4.4.8 of \cite{pedersen}) the center of the multiplier algebra of
$\A/\I_\alpha$ is isomorphic to the $C^*$-algebra of the
bounded continuous functions on the spectrum $\hat\A$ of $\A$. The quotient
$\A/\I_\alpha$ is again scattered  and therefore $\I_{\alpha+1}/\I_\alpha$ is an
essential ideal of $\A/\I_\alpha$ (Proposition \ref{scattered-are-atomic}). This implies that
$\A/\I_\alpha$ can be embedded into the multiplier algebra
of $\I_{\alpha+1}/\I_\alpha\cong \K(\ell_2(\kappa))$, for some cardinal $\kappa$
so that the embedding is the identity on $\I_{\alpha+1}/\I_\alpha$.
It follows that $\A/\I_\alpha$ has a faithful representation in
$\B(\ell_2(\kappa))$, where $\I_{\alpha+1}/\I_\alpha$
is mapped onto $\K(\ell_2(\kappa))$. 
So this representation is irreducible and its kernel zero is included in the kernel
of any other irreducible representation of  $\A/\I_\alpha$. Therefore all points
of the spectrum of $\A/\I_\alpha$ are in the closure (with the Jacobson topology)
of this one point. Hence the only continuous
functions on the spectrum of $\A/\I_\alpha$ are constant maps.
It follows from the Dauns-Hofmann theorem that the center
of the multipliers of  $\A/\I_\alpha$ is trivial.

(3) $\Rightarrow$ (1) Again let $(\I_\alpha)_{\alpha\leq ht(\A)}$ be 
the Cantor-Bendixson sequence of $\mathcal A$. If $\A$ is not fully commutative, then
it has a quotient $\A/\I_\alpha$ with an essential
ideal $\I_{\alpha+1}/\I_\alpha$ which is isomorphic to
$\bigoplus_{i\in I} \K(\HH_i)$ for some nonzero Hilbert spaces $\HH_i$ and some
 set $I$ with at least two elements. By the essentiality of the ideal $\I_{\alpha+1}/\I_\alpha$
  in $\A/\I_\alpha$,
there is an embedding of $\A/\I_\alpha$ into
$\bigoplus_{i\in I}\B(\HH_i)$, which sends $\I_{\alpha+1}/\I_\alpha$
onto $\bigoplus_{i\in I} \K(\HH_i)$ and $\bigoplus_{i\in I}\B(\HH_i)$ can be identified
with the algebra of multipliers of $\I_{\alpha+1}/\I_\alpha$.
However since $|I|\geq 2$, the projections on the factors $\B(\HH_i)$ witness the fact that
the center of the algebra of multipliers of $\A/\I_\alpha$ is nontrivial.
\end{proof}

Given a GCR $C^*$-algebra $\A$ by its GCR composition series we mean a 
sequence of ideals $(\J_\alpha)_{\alpha\leq \alpha_0}$ for some ordinal $\alpha_0$
such that $\J_0=\{0\}$, $\J_{\alpha_0}=\A$, $\J_\lambda=\overline{\bigcup_{\alpha<\lambda}\J_\alpha}$
for each limit ordinal $\lambda\leq \alpha_0$ and 
$$\{[a]_{\J_\alpha}: a\in J_{\alpha+1}\}=CCR(\J_{\alpha+1}/J_\alpha),$$
for each $\alpha<\alpha_0$ (see Section 1.5 of \cite{invitation}).

\begin{proposition}\label{gcr-fully}
Suppose that $\A$ is a fully noncommutative scattered $C^*$-algebra.
Then its Cantor-Bendixson sequence and its GCR composition series coincide.
\end{proposition}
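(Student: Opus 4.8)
The plan is to reduce the proposition to the single identity
$$CCR(\B)=\I^{At}(\B)$$
for an arbitrary nonzero fully noncommutative scattered $C^*$-algebra $\B$, and then to run a transfinite induction comparing the Cantor-Bendixson sequence $(\I_\alpha)$ with the GCR composition series $(\J_\alpha)$ term by term, using that the latter is determined by the canonical recursion $\J_{\alpha+1}/\J_\alpha=CCR(\A/\J_\alpha)$.

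First I would establish this identity. By Proposition \ref{ccr} we already have $\I^{At}(\B)\subseteq CCR(\B)$, so the whole content is the reverse inclusion. Writing $(\I_\alpha)_{\alpha\leq ht(\B)}$ for the Cantor-Bendixson sequence of $\B$, full noncommutativity (Definition \ref{fully}) applied to the very first step gives that $\I^{At}(\B)=\I_1$ is $*$-isomorphic to $\K(\HH)$ for a \emph{single} Hilbert space $\HH$; in particular it is simple, and by Proposition \ref{scattered-are-atomic} it is an essential ideal of $\B$. I would then feed the isomorphism $\I^{At}(\B)\cong\K(\HH)$ into Proposition \ref{embedding-scattered} to obtain a faithful $*$-homomorphism $\pi\colon\B\to\B(\HH)$ extending it, so that $\pi[\I^{At}(\B)]=\K(\HH)$. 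Since $\K(\HH)$ already acts irreducibly on $\HH$ and $\pi[\B]\supseteq\K(\HH)$, the representation $\pi$ is irreducible. Now for any $a\in CCR(\B)$ the operator $\pi(a)$ is compact, hence lies in $\K(\HH)=\pi[\I^{At}(\B)]$, and faithfulness of $\pi$ forces $a\in\I^{At}(\B)$. This yields $CCR(\B)\subseteq\I^{At}(\B)$ and hence the desired identity.

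The main obstacle is precisely this inclusion, and it is where full noncommutativity is indispensable: it collapses $\I^{At}(\B)$ into one block $\K(\HH)$ of \emph{all} compacts on a single Hilbert space, producing one faithful irreducible representation whose compact part is exactly the image of $\I^{At}(\B)$. Without it (for instance in the algebra of Proposition \ref{not-gcr}) the atomic ideal splits into several blocks, a minimal projection of a quotient may be sent to a noncompact operator by some irreducible representation, and $CCR(\B)$ strictly contains $\I^{At}(\B)$.

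To finish, I would observe that every quotient $\A/\I_\alpha$ is again fully noncommutative and scattered: it is scattered by Proposition \ref{sub}, and the isomorphisms $(\A/\I_\alpha)/(\I_{\alpha+\beta}/\I_\alpha)\cong\A/\I_{\alpha+\beta}$ identify its Cantor-Bendixson quotients with the $C^*$-algebras $\I_{\alpha+\beta+1}/\I_{\alpha+\beta}$, each of which is all compacts on a Hilbert space. A transfinite induction then shows $\I_\alpha=\J_\alpha$: the case $\alpha=0$ and the limit stages are immediate from continuity of both sequences, while at a successor stage the hypothesis $\I_\alpha=\J_\alpha$ gives $\A/\J_\alpha=\A/\I_\alpha$, so the recursion together with the identity above yields $\J_{\alpha+1}/\J_\alpha=CCR(\A/\I_\alpha)=\I^{At}(\A/\I_\alpha)=\I_{\alpha+1}/\I_\alpha$, whence $\J_{\alpha+1}=\I_{\alpha+1}$. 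Since $\I^{At}(\A/\I_\alpha)\neq\{0\}$ for every $\alpha<ht(\A)$, the common value of $CCR$ is nonzero at each stage, so both sequences strictly increase and terminate at the same ordinal $ht(\A)$, and therefore coincide.
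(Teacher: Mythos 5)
Your proposal is correct and follows essentially the same route as the paper: both arguments use the essentiality of $\I^{At}$ of each quotient (Proposition \ref{scattered-are-atomic}) together with Proposition \ref{embedding-scattered} to produce a faithful irreducible representation sending $\I^{At}(\A/\I_\alpha)$ onto all of $\K(\HH)$, deduce $CCR(\A/\I_\alpha)\subseteq\I^{At}(\A/\I_\alpha)$ from compactness of images under that representation, invoke Proposition \ref{ccr} for the reverse inclusion, and conclude by transfinite induction. The only difference is presentational: you isolate the identity $CCR=\I^{At}$ as a standalone lemma and spell out why the quotients remain fully noncommutative, whereas the paper folds this directly into the inductive step.
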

\begin{proof} Let $(I_\alpha)_{\alpha\leq ht(\A)}$ be the Cantor-Bendixson sequence
of $\A$.
By induction on $\alpha\leq ht(\A)$ we prove that
$\I_\alpha=\J_\alpha$. The limit ordinal case is trivial.  So let $\alpha<ht(\A)$
and consider $\A/\I_\alpha$. Since $\A$ is fully noncommutative
$\I^{At}(\A/\I_\alpha)$ is isomorphic to the algebra $\K(\ell_2(\kappa))$
for some cardinal $\kappa$. Since $\I^{At}(\A/\I_\alpha)$  is essential
in $\A/\I_\alpha$  by Proposition \ref{scattered-are-atomic}, we conclude
by Proposition \ref{embedding-scattered}  that there is an embedding
$i: \A/\I_\alpha\rightarrow \B(\ell_2(\kappa))$ such that
$i[\I^{At}(\A/\I_\alpha)]=\K(\ell_2(\kappa))$. Since
$\K(\ell_2(\kappa))$ is irreducible in $\B(\ell_2(\kappa))$ it follows that
$i$ is an irreducible representation of $\A/\I_\alpha$.
So $CCR(\A/\I_\alpha)\subseteq \I^{At}(\A/\I_\alpha)$.
The other inclusion holds in general by Proposition \ref{ccr}.
\end{proof}

\section{Some constructions of scattered  $C^*$-algebras}

In this section we give examples of constructions
of scattered $C^*$-algebras with some prescribed sequences of Cantor-Bendixson derivatives.
All these examples correspond to classical classes of scattered locally compact spaces.
As we want to produce ``genuinely" noncommutative examples, we will focus on
two notions of being fully noncommutative (Definition \ref{fully}) and the stability of
$C^*$-algebras (cf. \cite{rordam-stable}):

\begin{definition} A $C^*$-algebra $\A$ is called stable if and only if
$\A$ is $*$-isomorphic to $\A\otimes \K(\ell_2)$.
\end{definition}

The first immediate group of constructions is obtained by tensoring
the commutative examples by $\K(\ell_2)$ and applying Proposition \ref{cb-tensor}.
 They are not fully noncommutative,
but are stable.
 Before the following theorem recall
the definitions of width and height of a Scattered $C^*$-algebras (Definition \ref{width-height}).

\begin{theorem}\label{consistency} Let $\kappa$ be a regular cardinal.
\begin{itemize}
\item There are stable scattered $C^*$-algebras
of countable width and height $\alpha$ for any $\alpha<\omega_2$.
\item It is consistent that there are stable scattered $C^*$-algebras
of countable width and height $\alpha$ for any $\alpha<\omega_3$.
\item It is consistent that there are stable scattered $C^*$-algebras
of  width $\kappa$ and height $\kappa^+$.
\item There are stable scattered $C^*$-algebras $\A$ of height 2
with $\I^{At}(\A)\cong c_0\otimes\K(\ell_2)$ and $\A/\I^{At}(\A)\cong c_0(\cc)\otimes\K(\ell_2)$
where $\cc$ denotes the cardinality of the continuum.
\end{itemize}
\end{theorem}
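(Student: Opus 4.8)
The plan is to realize all four families as tensor products $\A=C_0(K)\otimes\K(\ell_2)$, where $K$ is a suitable scattered locally compact Hausdorff space taken from the classical topological literature, and to read off the Cantor--Bendixson data of $\A$ from that of $K$. The engine is Proposition \ref{cb-tensor}: if $(\I_\alpha)_{\alpha\leq ht(C_0(K))}$ is the Cantor--Bendixson sequence of $C_0(K)$, then the sequence of $C_0(K)\otimes\K(\ell_2)$ is $(\I_\alpha\otimes\K(\ell_2))_\alpha$, so $ht(\A)=ht(C_0(K))=ht(K)$ and the $\alpha$-th quotient is $(\I_{\alpha+1}/\I_\alpha)\otimes\K(\ell_2)$. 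By Lemma \ref{c(k)-ideals} the commutative layer is $c_0(\Gamma_\alpha)$ with $\Gamma_\alpha=K^{(\alpha)}\setminus K^{(\alpha+1)}$, so after tensoring the $\alpha$-th layer becomes $c_0(\Gamma_\alpha)\otimes\K(\ell_2)\cong\bigoplus_{\gamma\in\Gamma_\alpha}\K(\ell_2)$, a nondegenerate subalgebra of $\K(\ell_2(\Gamma_\alpha\times\N))$; hence the minimal carrier cardinal of this layer is $\max(|\Gamma_\alpha|,\omega)$ and the supremum defining $wd(\A)$ (Definition \ref{width-height}) reproduces the topological width of $K$. Stability is automatic, since $\A\otimes\K(\ell_2)\cong C_0(K)\otimes\K(\ell_2)\otimes\K(\ell_2)\cong C_0(K)\otimes\K(\ell_2)=\A$ because $\K(\ell_2)\otimes\K(\ell_2)\cong\K(\ell_2)$.

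With this reduction, each item reduces to quoting a commutative space with the prescribed cardinal sequence. For the first item I would invoke the ZFC theorem of Juhász and Weiss (\cite{juhasz-weiss}): for every $\alpha<\omega_2$ there is a locally compact scattered space of height $\alpha$ with all Cantor--Bendixson levels countable; tensoring yields a stable scattered algebra of countable width and height $\alpha$. For the second item I would use the consistency constructions of \cite{bs, just} (which, suitably iterated, reach all heights below $\omega_3$ with countable width) for locally compact scattered spaces, and tensor. For the third item, a commutative $\kappa$-thin-tall space (height $\kappa^+$, width $\kappa$) is available for $\kappa=\omega$ in ZFC and, for regular $\kappa>\omega$, consistently via Koepke's construction (\cite{koepke}); its tensor product with $\K(\ell_2)$ is the desired width-$\kappa$, height-$\kappa^+$ stable algebra. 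For the fourth item I would take the $\Psi$-space $K=\N\cup\mathcal D$ attached to an almost disjoint family $\mathcal D$ of infinite subsets of $\N$ of size $\cc$ (which exists in ZFC): $K$ is locally compact scattered of height $2$, with $K\setminus K'=\N$ and $K'=\mathcal D$ discrete of size $\cc$, so $\I^{At}(C_0(K))\cong c_0$ and $C_0(K)/\I^{At}(C_0(K))\cong c_0(\cc)$, and tensoring with $\K(\ell_2)$ gives exactly the stated algebra.

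The genuinely routine content is the tensor bookkeeping; the two points needing care are (i) verifying the minimal carrier cardinal of $\bigoplus_{\gamma\in\Gamma_\alpha}\K(\ell_2)$, so that width transports verbatim in the infinite-level cases, and (ii) identifying $\I^{At}$ of the base layer, which in the height-$2$ case follows from Proposition \ref{essential-compact} applied to the essential ideal $c_0\otimes\K(\ell_2)$. The real obstacle, however, lies entirely outside this paper: it is buried in the cited commutative constructions, since the existence of countable-width spaces of height at least $\omega_2$ and of $\kappa$-thin-tall spaces for $\kappa>\omega$ is precisely where the extra set-theoretic hypotheses enter. Thus the theorem should be understood as asserting that every ZFC (respectively consistent) commutative example of the relevant cardinal sequence transfers intact, and stably, into the $C^*$-algebraic world, the resulting algebras being stable though not fully noncommutative.
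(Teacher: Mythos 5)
Your proposal is correct and is essentially the paper's own proof: the paper likewise disposes of all four items by tensoring the classical commutative scattered spaces (Juh\'asz--Weiss for heights below $\omega_2$, Baumgartner--Shelah and Mart\'inez for the $\omega_3$ consistency, Koepke--Mart\'inez for $\kappa$-thin-tall, and $\Psi$-spaces for the last item) with $\K(\ell_2)$ and reading off the Cantor--Bendixson data via Proposition \ref{cb-tensor}. The only difference is that you spell out the routine bookkeeping (transport of width, stability from $\K(\ell_2)\otimes\K(\ell_2)\cong\K(\ell_2)$, identification of $\I^{At}$ via Proposition \ref{essential-compact}) that the paper leaves implicit.
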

\begin{proof} In all these cases we apply Proposition \ref{cb-tensor}
for tensor products $C(K)\otimes \K(\ell_2)$ for appropriate
compact scattered Hausdorff spaces $K$ from \cite{juhasz-weiss},
\cite{bs}, \cite{martinez-jsl}, \cite{koepke} and the $\Psi$-spaces
surveyed e.g. in \cite{hrusak}.
See also \cite{roitmanhandbook}.
\end{proof}

In the rest of this section we focus on obtaining fully noncommutative examples
which do not require any special set-theoretic assumptions. Before doing so
let us inquire about the relationship between the stability
and being fully noncommutative. As shown in the above examples the former does not
imply the latter, but being fully noncommutative implies the stability
for separable  scattered $C^*$-algebras which have no unital quotients. 
The condition of  having no unital quotients is necessary as
quotients of stable algebras are stable (2.3 (ii) of \cite{rordam-stable})
and no stable algebra can be unital.


\begin{lemma}\label{fnoncom-stable} Suppose that $\A$ is a 
 separable   scattered $C^*$-algebra with the Cantor-Bendixson sequence
$(\I_\alpha)_{\alpha<ht(\A)}$. Then $\A$ is stable if and
only if  $\I_{\alpha+1}/\I_\alpha$ is stable for each $\alpha<ht(\A)$.
In particular, if 
 $\A$ is fully noncommutative and with no unital quotient, then $\A$  is stable.

\end{lemma}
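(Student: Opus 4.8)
The plan is to prove both implications and then read off the ``in particular'' clause. I will treat the \emph{only if} direction first, since it is the routine one. Since quotients of stable algebras are stable (2.3(ii) of \cite{rordam-stable}), each $\A/\I_\alpha$ is stable, and by Theorem \ref{theorem1}(\ref{ec-sequence-new}) we have $\I_{\alpha+1}/\I_\alpha=\I^{At}(\A/\I_\alpha)$. So it suffices to show that if $B$ is a separable stable scattered algebra then $\I^{At}(B)$ is stable. By Proposition \ref{atoms-are-compact} together with the structure of essential ideals used in Proposition \ref{essential-maximal}, and by Proposition \ref{scattered-are-atomic}, $\I^{At}(B)\cong\bigoplus_{i\in I}\K(\HH_i)$ is an essential ideal of $B$; such a $c_0$-sum is stable exactly when every $\HH_i$ is infinite-dimensional, because $\bigl(\bigoplus_i\K(\HH_i)\bigr)\otimes\K\cong\bigoplus_i\K(\HH_i\otimes\ell_2)$. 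I would argue by contradiction: if some $\HH_{i_0}$ is finite-dimensional, embed $B$ into $M(\I^{At}(B))=\prod_{i\in I}\B(\HH_i)$ via Proposition \ref{embedding-scattered}; the projection that is the identity on $\HH_{i_0}$ and $0$ elsewhere lies in $\I^{At}(B)\subseteq B$ and is central in $\prod_i\B(\HH_i)$, hence central in $B$, so it cuts out a finite-dimensional unital direct summand $M_n$ ($n=\dim\HH_{i_0}$), i.e.\ a unital quotient of $B$, contradicting stability.

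For the \emph{if} direction I would show by transfinite induction on $\alpha\le ht(\A)$ that every $\I_\alpha$ is stable; here the separability of $\A$ is essential, as it forces $ht(\A)$ to be a countable ordinal and thus makes the countable tools applicable. The base $\I_0=\{0\}$ is trivial. At a limit $\lambda$, choosing an increasing $\omega$-sequence cofinal in $\lambda$ exhibits $\I_\lambda=\overline{\bigcup_{\alpha<\lambda}\I_\alpha}$ as a countable inductive limit of the stable, $\sigma$-unital algebras $\I_\alpha$, so $\I_\lambda$ is stable by Corollary 4.1 of \cite{rordam-h}.

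The successor step is the crux and the main obstacle. From $\I_\alpha$ stable and $\I_{\alpha+1}/\I_\alpha$ stable I must deduce that $\I_{\alpha+1}$ is stable, i.e.\ that the extension $0\to\I_\alpha\to\I_{\alpha+1}\to\I_{\alpha+1}/\I_\alpha\to0$ is stable; this cannot come from stability of the two ends alone, since extensions of stable algebras by stable algebras need not be stable, so the argument must exploit the special form of the quotient $\I_{\alpha+1}/\I_\alpha=\I^{At}(\A/\I_\alpha)\cong\bigoplus_i\K(\HH_i)$. My plan is to verify the intrinsic Hjelmborg--R{\o}rdam criterion for stability of $\I_{\alpha+1}$ (for each $a\in(\I_{\alpha+1})_+$ and $\epsilon>0$ produce $b$ with $b^*b$ within $\epsilon$ of $a$ and $bb^*$ nearly orthogonal to $a$) by first solving the problem in the quotient, where the abundance of matrix units in $\bigoplus_i\K(\HH_i)$ supplies orthogonal ``shifts'', then lifting the solution and correcting the ideal part using stability of $\I_\alpha$. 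The delicate point, which I expect to require the bulk of the work, is arranging the two approximate-orthogonality conditions to hold simultaneously modulo $\I_\alpha$ and inside $\I_\alpha$.

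Finally, for the ``in particular'' clause, suppose $\A$ is fully noncommutative with no unital quotient, so each $\I_{\alpha+1}/\I_\alpha\cong\K(\ell_2(\kappa_\alpha))$ for some cardinal $\kappa_\alpha$. If some $\kappa_\alpha$ were finite, then $\I^{At}(\A/\I_\alpha)\cong M_{\kappa_\alpha}$ would be a unital ideal of the scattered algebra $\A/\I_\alpha$; its unit is therefore a central projection, yielding a unital direct summand and hence a unital quotient of $\A$, a contradiction. Thus every $\kappa_\alpha$ is infinite, so each $\I_{\alpha+1}/\I_\alpha\cong\K(\ell_2(\kappa_\alpha))$ is stable, and the equivalence just established gives that $\A$ is stable.
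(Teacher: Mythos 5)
Your forward direction, limit step, and derivation of the ``in particular'' clause are all sound (though the forward direction is more roundabout than necessary: $\I_{\alpha+1}$ is an ideal of the stable separable algebra $\A$ and $\I_{\alpha+1}/\I_\alpha$ is a quotient of it, and both operations preserve stability by 2.3(ii) of \cite{rordam-stable}, so there is no need to analyse the structure of $\I^{At}$ or pass to the multiplier algebra). The problem is the successor step of the backward induction, which you yourself call ``the crux'' and then leave as a plan rather than a proof. You propose to verify the Hjelmborg--R{\o}rdam criterion for $\I_{\alpha+1}$ by solving it in the quotient and ``lifting the solution and correcting the ideal part,'' but you give no argument for why such a lift exists or why the two approximate-orthogonality conditions can be arranged simultaneously modulo $\I_\alpha$ and inside $\I_\alpha$. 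Since, as you correctly note, an extension of a stable algebra by a stable algebra need not be stable, this is precisely the point at which the proof must invoke scatteredness in an essential way, and your sketch does not say how. As written, the induction does not close.

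The paper closes it by a different and much shorter route: scattered $C^*$-algebras are AF (Lemma 5.1 of \cite{lin}), and by Blackadar's theorem a separable AF algebra is stable if and only if it admits no nontrivial bounded trace (\cite{blackadar-af}, \cite{rordam-stable}). Unlike stability itself, the absence of bounded traces \emph{is} closed under extensions: a bounded trace on $\I_{\alpha+1}$ restricts to a bounded trace on the stable separable AF algebra $\I_\alpha$, hence vanishes there, hence descends to a bounded trace on the stable quotient $\I_{\alpha+1}/\I_\alpha$, hence vanishes altogether; so $\I_{\alpha+1}$ is a separable AF algebra with no nontrivial bounded trace and is therefore stable. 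To repair your argument you should either import this trace argument or actually supply the missing lifting lemma for the Hjelmborg--R{\o}rdam criterion; the former is what the paper does and is the intended role of the AF hypothesis.
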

\begin{proof} 
For the forward implication uses the fact that ideals and quotients of
stable $C^*$-algebras are stable (2.3. (ii) of \cite{rordam-stable}).

The proof of the backward implication is by induction on the height of the algebra.
Note that this height must be a countable ordinal as the algebra is assumed to be separable.
Suppose that the height is a successor ordinal $\alpha+1$.
We have that $\A=\I_{\alpha+1}$ and that  $\I_\alpha$ is stable by the inductive
assumption. The quotient
 $\I_{\alpha+1}/\I_\alpha$  is stable by the hypothesis.

Consider the short exact sequence,
$$0\rightarrow \I_\alpha\rightarrow \A\rightarrow \I_{\alpha+1}/\I_\alpha\rightarrow0.$$
Now we use the fact that scattered $C^*$-algebras are approximately
finite (Lemma 5.1 of \cite{lin}) and Blackadar's characterization of separable stable AF-algebras
as AF-algebras with no nontrivial bounded trace (\cite{blackadar-af},
\cite{rordam-stable}). A nontrivial bounded trace on $\A$ would need
to be zero on $\I_\alpha$, by the inductive assumption. Hence it would define a 
nontrival bounded trace on $\I_{\alpha+1}/\I_\alpha$, which is impossible
since  it is  stable
(see also Proposition 6.12 of \cite{rordam-stable}).
If the height of $\A$ is a limit ordinal, then the result follows from the fact that countable inductive limits 
of separable stable $C^*$-algebras are also stable (Corollary 2.3 of \cite{rordam-stable}). 

To conclude the last part of the lemma from the previous, note that the assumption
that $\A$ is fully noncommutative and that the quotients $\I_{\alpha+1}/\I_\alpha$
are nonunital, implies that they are isomorphic to the algebras of all compact operators
on an infinite dimensional Hilbert spaces, and such algebras are stable.
\end{proof}

Examples from papers \cite{psi-space, thin-tall} show that nonseparable 
fully noncommutative scattered $C^*$-algebras do not have to be stable.
However by tensoring a fully-noncommutative $C^*$-algebra by $\K(\ell_2)$
we obtain a fully noncommutative $C^*$-algebra of the same height and width
which is additionally stable. 

Recall that for a cardinal $\kappa$, $\kappa^+$ denotes the minimal cardinal which is strictly bigger than the
$\kappa$.
The following proposition shows that one can start from $\K(\ell_2(\kappa))$,
where $\kappa$ is a regular cardinal and increase the height of the algebra
up to any ordinal $\theta<\kappa^+$. In these constructions the algebras
of lower height are not essential ideals of those previously constructed, and so we can not
get to the height $\kappa^+$ without increasing the width. These constructions correspond
to consecutive application of one-point compactification and the Cartesian product 
by $\kappa$ with the discrete topology, that is taking the disjoint union of $\kappa$-many
copies of one point compactification of the previous spaces.

\begin{proposition} \label{kappa-going-up}
Suppose that $\kappa$
is a regular cardinal. For any $\theta<\kappa^+$ there is a scattered $C^*$-algebra 
$\A_\theta$ of height $\theta$  such that if  $(\mathcal I_{\alpha}^\theta)_{\alpha\leq ht(\mathcal A_{\theta})}$ is the
Cantor-Bendixson 
sequence of $\mathcal A_\theta$,  the algebra $\mathcal I_{\alpha+1}^\theta/\mathcal I_\alpha^\theta$ 
is  isomorphic
to $\mathcal K(\ell_2 (\kappa))$ for every $\alpha<ht(\A_\theta)$.
In particular, each $\A_\theta$ is fully noncommutative,
with $wd(\A_\theta)=\kappa$. There are stable examples with this property.
\end{proposition}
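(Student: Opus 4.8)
The plan is to construct the algebras $\A_\theta$ by transfinite recursion on $\theta<\kappa^+$, mimicking the two topological operations named in the statement: one-point compactification (which adds one layer to the top of the Cantor-Bendixson sequence) and the disjoint union of $\kappa$-many copies (which keeps the width at $\kappa$ while allowing the height to climb). At the bottom, $\A_1=\K(\ell_2(\kappa))$, which is fully noncommutative of height $1$ and width $\kappa$. The key bookkeeping invariant throughout is that every successor quotient $\mathcal I^\theta_{\alpha+1}/\mathcal I^\theta_\alpha$ is $*$-isomorphic to $\K(\ell_2(\kappa))$; this is exactly what makes $\A_\theta$ fully noncommutative (Definition \ref{fully}) and forces $wd(\A_\theta)=\kappa$.

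\textbf{The recursion.} Given $\A_\theta$ with the desired Cantor-Bendixson data, I would form the ``successor'' algebra
$$
\A_{\theta+1}=\Big\{f\in C_b\big(\kappa,\,\widetilde{\A_\theta}\big): f(\xi)\in\A_\theta\text{ for all but finitely many }\xi,\ \lim_{\xi}f(\xi)\in\C\,1\Big\},
$$
the noncommutative analogue of taking $\kappa$-many copies of the one-point compactification and then compactifying the whole index set; concretely this is the ideal sitting inside $C(\kappa+1)\otimes\widetilde{\A_\theta}$ whose fibre over each isolated point lies in $\A_\theta$ up to a scalar multiple of the unit, and whose fibre at the point at infinity is scalar. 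One checks that $\I^{At}(\A_{\theta+1})\cong\bigoplus_{\xi<\kappa}\I^{At}(\A_\theta)$, an essential ideal isomorphic to a subalgebra of compacts, so by Proposition \ref{essential-compact} it is exactly the first Cantor-Bendixson ideal; and the quotient by it is again of the same form but built over $\A_\theta/\I^{At}(\A_\theta)$. Peeling off layers this way, and using Proposition \ref{atoms-are-compact} together with the identification of $\I^{At}$ of a quotient, gives $ht(\A_{\theta+1})=ht(\A_\theta)+1$ with each new successor quotient of the form $\bigoplus_{\xi<\kappa}\K(\ell_2(\kappa))\cong\K(\ell_2(\kappa))$, since $\kappa\cdot\kappa=\kappa$. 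This is where regularity of $\kappa$ enters.

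\textbf{Limit stages.} For limit $\theta<\kappa^+$ I would take a continuous inductive limit along a suitable cofinal system of the previously built $\A_\eta$ for $\eta<\theta$, arranging the connecting maps so that each $\A_\eta$ appears as an ideal of $\A_\theta$ and the Cantor-Bendixson sequence of $\A_\theta$ at levels below $\eta$ agrees with that of $\A_\eta$. Because $\theta<\kappa^+$ has cofinality at most $\kappa$, and each successor quotient is $\K(\ell_2(\kappa))$, the width stays $\kappa$: the index sets involved never exceed $\kappa$ in cardinality, again by regularity of $\kappa$ (so that a $\mathrm{cf}(\theta)\le\kappa$-indexed union of width-$\kappa$ pieces has width $\kappa$). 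The main obstacle, and the step needing the most care, is precisely this limit stage: one must verify that the Cantor-Bendixson ideal $\I^{At}$ of the quotient $\A_\theta/\mathcal I^\theta_\alpha$ is computed correctly at and immediately above the limit level $\alpha$, i.e. that no ``extra'' minimal projections appear and that the limit does not collapse the height. The commutative intuition (Theorem \ref{eq-compact-scattered}, the continuity clause) guides this, but one must confirm that the continuity of the ideal sequence is compatible with the inductive-limit structure and that Lemma \ref{all-ideals} applies so the listed ideals are all of them.

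\textbf{Stability.} Finally, to produce stable examples with the same height and width, I would tensor the constructed $\A_\theta$ with $\K(\ell_2)$ and invoke Proposition \ref{cb-tensor}: since each successor quotient $\mathcal I^\theta_{\alpha+1}/\mathcal I^\theta_\alpha\cong\K(\ell_2(\kappa))$ has $(\mathcal I^\theta_{\alpha+1}/\mathcal I^\theta_\alpha)\otimes\K(\ell_2)\cong\K(\ell_2(\kappa))$, the tensored algebra $\A_\theta\otimes\K(\ell_2)$ has the identical Cantor-Bendixson data, hence the same height $\theta$ and width $\kappa$ and is still fully noncommutative, while being manifestly stable.
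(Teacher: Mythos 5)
There is a genuine gap, and it is located exactly at the successor step. Your construction takes, in effect, a $c_0$-direct sum over the discrete index set $\kappa$ (``$\kappa$-many copies of the one-point compactification''), and you then compute the new successor quotients as $\bigoplus_{\xi<\kappa}\K(\ell_2(\kappa))\cong\K(\ell_2(\kappa))$ ``since $\kappa\cdot\kappa=\kappa$''. That isomorphism is false: $\K(\ell_2(\kappa))$ is simple, while the $c_0$-sum of $\kappa$ nonzero summands has $2^\kappa$ ideals. The cardinal identity $\kappa\cdot\kappa=\kappa$ gives $\K(\ell_2(\kappa))\otimes\K(\ell_2(\kappa))\cong\K(\ell_2(\kappa\cdot\kappa))\cong\K(\ell_2(\kappa))$; it is the \emph{tensor product}, not the direct sum, that reproduces an elementary algebra. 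With your construction the successor quotients come out isomorphic to algebras of the form $c_0(\kappa)\otimes(\cdots)$, which are not elementary, so the resulting algebra is not fully noncommutative (its ideals do not form a chain, cf.\ Proposition \ref{chain}) and the proposition is not proved. The paper's successor step is $\A_{\eta+1}=\widetilde{\A_\eta}\otimes\K(\ell_2(\kappa))$, a matrix amplification of the unitization rather than a sum of copies, and the Cantor--Bendixson bookkeeping then runs through Lemma \ref{atoms-tensor-product} and Lemma \ref{atoms-unitization} to give $\I^{\eta+1}_{\beta}=\I^{\eta}_{\beta}\otimes\K(\ell_2(\kappa))$ with top quotient $\C\otimes\K(\ell_2(\kappa))\cong\K(\ell_2(\kappa))$. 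A secondary defect: as literally written your two conditions on $f$ are incompatible with a nonzero scalar limit at infinity, since if $f(\xi)\in\A_\theta$ for all but finitely many $\xi$ and $f$ converges along the cofinite filter, the limit lies in the closed ideal $\A_\theta$ and hence cannot be a nonzero multiple of the unit; so the intended top layer collapses to $0$.

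The limit stage of your plan also drifts in the wrong direction: you propose to arrange the connecting maps so that each $\A_\eta$ sits as an ideal of $\A_\theta$, but the paper points out just before the proposition that in this construction the lower algebras are \emph{not} ideals of the later ones; the connecting maps are corner embeddings $a\mapsto(a,0)\otimes 1_{0,0}$, and one simply takes the inductive limit along them, verifying $\I^\theta_\beta$ level by level. Realizing each stage as an (essential) ideal of the next while keeping the width fixed is precisely the harder problem treated by Lemma \ref{extension-compact} and Theorem \ref{exists-thin-tall}, and it requires stability of each stage. Your final paragraph on stability --- tensoring the finished $\A_\theta$ with $\K(\ell_2)$ and invoking Proposition \ref{cb-tensor} --- is correct and agrees with the paper.
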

\begin{proof}
Let $\{1_{\beta,\alpha} : \alpha, \beta < \kappa\}$ be a system
 of matrix units for $\mathcal K(\ell_2 (\kappa))$.
Define $\mathcal A_\theta$  for $\theta<\kappa^+$ by recursion
together with embeddings $\pi_{\theta,\eta}: \mathcal A_\eta \rightarrow \mathcal A_\theta$
for $\eta \leq\theta<\kappa^+$. The embeddings  satisfy the condition that
 $\pi_{\gamma,\theta}\circ \pi_{\theta,\eta}=\pi_{\gamma, \eta}$ 
for $\eta\leq\theta\leq\gamma<\kappa^+$.

Let $\mathcal A_0=\{0\}$ and $\mathcal A_1=\mathcal K( \ell_2 (\kappa))$.
Fix $\theta < \kappa^+$ and suppose we are given $\mathcal A_\eta$ and $\pi_{\gamma,\eta}$
 for every  $\eta \leq\gamma<\theta$.
First consider the case where $\theta=\eta+1$ for some $\eta<\kappa^+$.
Let $\mathcal A_\theta = \widetilde{A_\eta} \otimes \mathcal K (\ell_2 (\kappa))$.
 By the assumption we have $ ht(\mathcal A_{\eta})=\eta$ (i.e., $\mathcal I_{\eta}^{\eta} =\mathcal A_\eta$).
  Define the embedding  $\pi_{\theta, \eta}:\mathcal A_\eta \rightarrow 
 \mathcal A_\theta$ by mapping $a\rightarrow (a,0)\otimes 1_{0,0}$  (the top left entry of the $\kappa\times \kappa$ matrix over $\widetilde{A_\eta}$) and let $\pi_{\theta, \alpha}= \pi_{\theta, \eta}\circ \pi_{\eta,\alpha}$ for $\alpha<\eta$.
 
 We claim that  $\mathcal I^{\theta}_{\beta} 
   = \mathcal I^\eta_{\beta} \otimes \mathcal K(\ell_2 (\kappa))$
  for all $\beta< ht(\mathcal A_\eta) =\eta$. We show this by induction on $\beta$. Assume this is true for 
some $\beta< \eta$. Then
\begin{align}
 \nonumber \mathcal I^{\theta}_{\beta+1}/ \mathcal I^\theta_\beta =
  \I^{At}(\mathcal A_\theta/ \mathcal I^\theta_\beta)&= \I^{At}\big(\frac{\widetilde{A_\eta}\otimes 
  \mathcal K(\ell_2 (\kappa))}{\mathcal I_\beta^\eta \otimes \mathcal K(\ell_2(\kappa))}\big) \\
  \nonumber &\cong \I^{At}\big(\frac{\widetilde{A_\eta}}{\mathcal I_\beta^\eta} \otimes \mathcal K(\ell_2(\kappa))\big) \\
\tag{Lemma \ref{atoms-tensor-product}}   &= \I^{At}\big(\frac{\widetilde{A_\eta}}{\mathcal I_\beta^\eta}\big)  \otimes \I^{At} \big(\mathcal K(\ell_2(\kappa))\big)  \\
\tag{Lemma \ref{atoms-unitization}}  &=\frac{ \mathcal I^{\eta}_{\beta+1}}{\mathcal I^\eta_\beta} \otimes  \mathcal K(\ell_2(\kappa))    
    \\
    \nonumber &\cong \mathcal K(\ell_2(\kappa)\otimes \ell_2(\kappa)) \cong  \mathcal K(\ell_2(\kappa)).
\end{align}

Let $\sigma_{\theta, \beta}$ (resp. $\sigma_{\eta,\beta}$) denote the canonical quotient maps from 
$\mathcal A_\theta$ (resp. $\mathcal A_\eta$) onto $\mathcal A_\theta / \mathcal I_\beta^\theta$  
 (resp. $\mathcal A_\eta / \mathcal I_\beta^\eta$). Also let 
 $$ \psi : \frac{\mathcal A_\theta }{\mathcal I_\beta^\theta} =\frac{\widetilde{A_\eta} \otimes 
  \mathcal K(\ell_2 (\kappa))}{\mathcal I_\beta^\eta \otimes \mathcal K(\ell_2(\kappa))} \rightarrow
  \frac{\widetilde{A_\eta}}{\mathcal I_\beta^\eta} \otimes \mathcal K(\ell_2(\kappa))$$
  be the natural isomorphism.  Then the composition $\psi \circ \sigma_{\theta, \beta}$
   is the map which sends $(a,\lambda)\otimes T \in \mathcal A_\theta$ to $((a,\lambda)+\mathcal I_\beta^\eta)\otimes
   T$. Therefore $\psi \circ \sigma_{\theta, \beta}=\widetilde{\sigma}_{\eta, \beta} \otimes id$ (where
   $\widetilde{\sigma}_{\eta, \beta}$ is the natural extension of $\sigma_{\eta, \beta}$ to 
   $\widetilde{\mathcal A_\eta}$ and  $id$ is the identity map on 
   $\mathcal K(\ell_2(\kappa))$). 
   Thus 
\begin{align}
 \nonumber   \mathcal I^{\theta}_{\beta+1}&= \sigma_{\theta,\beta}^{-1}( \I^{At}(\mathcal A_\theta/ \mathcal I^\theta_\beta)) \\
   \nonumber   &=  \sigma_{\theta,\beta}^{-1} \circ \psi^{-1} (\frac{ \mathcal I^{\eta}_{\beta+1}}{\mathcal I^\eta_\beta} \otimes  \mathcal K(\ell_2(\kappa))\big)\\
  \nonumber   &= (\widetilde{\sigma}_{\eta, \beta} \otimes id)^{-1} \big(\frac{ \mathcal I^{\eta}_{\beta+1}}{\mathcal I^\eta_\beta} \otimes  \mathcal K(\ell_2(\kappa))\big)\\
\nonumber  &=  \mathcal I^\eta_{\beta+1} \otimes \mathcal K(\ell_2 (\kappa)).
\end{align}   
 
 Assume $\beta\leq \eta$
    is a limit ordinal and
   the claim is true for all the ordinals below $\beta$.
   \begin{align}
 \nonumber \mathcal I^\theta_\beta &= \overline{\bigcup_{\alpha<\beta} \mathcal I^\theta_\alpha}
 = \overline{\bigcup_{\alpha<\beta} \mathcal I^\eta_\alpha \otimes  \mathcal K(\ell_2(\kappa))}\\
 \nonumber &= \overline{\bigcup_{\alpha<\beta} \mathcal I^\eta_\alpha} \otimes  \mathcal K(\ell_2(\kappa))
 = \mathcal I_\beta^\eta  \otimes  \mathcal K(\ell_2(\kappa)).
   \end{align}
Next step is to show that $ht(\mathcal A_\theta)= \theta= \eta+1$. First notice that
 \begin{align}
 \nonumber \mathcal I^{\theta}_{\theta}/ \mathcal I^\theta_\eta =
  \I^{At}\big(\mathcal A_\theta/ \mathcal I^\theta_\eta)&\cong \I^{At}\big(\frac{\widetilde{A_\eta}
   \otimes 
  \mathcal K(\ell_2 (\kappa))}{\mathcal A_\eta \otimes \mathcal K(\ell_2(\kappa))}\big) \\
  \nonumber &\cong \I^{At}\big(\frac{\widetilde{A_\eta}}{\mathcal A_\eta} \otimes \mathcal K(\ell_2(\kappa))\big)\\
\tag{Lemma \ref{atoms-tensor-product}}   &= \I^{At}\big(\frac{\widetilde{A_\eta}}{\mathcal A_\eta}\big)  \otimes \I^{At} \big(\mathcal K(\ell_2(\kappa))\big) \\
    \nonumber  &\cong \mathbb C \otimes  \mathcal K(\ell_2(\kappa)) \cong  \mathcal K(\ell_2(\kappa)).
\end{align}
Similar to the above we have
\begin{align}
 \nonumber   \mathcal I^{\theta}_{\theta}= (\widetilde{\sigma}_{\eta, \eta}\otimes id)^{-1} 
\big(\frac{\widetilde{A_\eta}}{\mathcal A_\eta} \otimes  \mathcal K(\ell_2(\kappa))\big)
=  \mathcal A_\theta.
\end{align}

Suppose $\theta$ is a limit ordinal. Let $\mathcal A_\theta$ be the inductive limit of the system 
$\{(\mathcal A_{\eta}, \pi_{\gamma, \eta}) : \eta\leq \gamma<\theta\}$. 
By repeatedly using the above claim we have
$$
\mathcal I^\theta_\beta = \mathcal I_\beta^\beta \otimes \bigotimes_{\beta<\gamma<\theta}
 \mathcal K(\ell_2(\kappa))
$$
for $\beta< \theta$.
For every $\eta<\theta$,
the map $\pi_{\theta, \eta}: \mathcal A_\eta \rightarrow \mathcal A_\theta$ defined by 
$$ \pi_{\theta,\eta}(a)= \lim_{\substack{\longrightarrow\\  \gamma< \theta }} \pi_{\gamma,\eta}(a) $$
is an embedding.
Assume $a \in \A_\theta$ and $\epsilon> 0$ are given. Then there are $\eta<\theta$ and $b\in \A_\eta$ such that
$$
\|\pi_{\theta,\eta}(b)  -a \|< \epsilon.
$$
Since $\A_\eta  = \mathcal I_{\eta}^{\eta}$, we have $\pi_{\theta,\eta}(b)\in \I_{\eta}^{\theta}$. Therefore 
$a \in \overline{\bigcup_{\eta<\theta} \I_{\eta}^{\theta}} = \I_\theta^\theta$. Thus $\A_\theta= \I_\theta^\theta$
 and $ ht(\mathcal A_\theta) = \theta$. Also 
$$
 \mathcal I^{\theta}_{\beta+1}/ \mathcal I^\theta_\beta \cong
  \frac{ \mathcal I^{\beta+1}_{\beta+1}}{\mathcal I^{\beta+1}_\beta} \otimes 
  \bigotimes_{\beta<\eta\leq\theta} \mathcal K(\ell_2(\kappa)) \cong  \mathcal K(\ell_2(\kappa)),
$$
if $\beta< \theta$.
 This completes the proof. 
\end{proof}

It is more difficult to increase the height of the algebra further without
increasing its width, i.e., keeping the previous algebra as
an essential ideal. In the following we focus on the case $\kappa=\omega$, and try to increase the height
of the algebra to uncountable ordinals while preserving the width $\omega$. We will see that the stability of an 
algebra can be employed  to guarantee  that it would be an essential ideal in the next algebra, helping us to maintain
the same width throughout the construction. The successor stage
 corresponds 
in the commutative case to dividing a locally
compact scattered Hausdorff space into infinitely many clopen noncompact subspaces
of the same height and one-point compactifying each part. This technique
is behind the standard example of a thin-tall locally compact space due to
Juhasz and Weiss (\cite{juhasz-weiss}).
For a scattered $C^*$-algebra $\A$, let $(\I_\beta(\A))_{\beta\leq ht(\A)}$ denote its Cantor-Bendixson sequence,

\begin{lemma}\label{extension-compact} Suppose that $\A$ a scattered 
 stable  $C^*$-algebra of height $\beta$.
Then there is a stable scattered $C^*$-algebra $\mathcal B$ of height $\beta+1$
containing $\A$ as essential ideal such that $\I_\beta(\B)=\A$ and $\B/\A\cong \K(\ell_2)$. 
\end{lemma}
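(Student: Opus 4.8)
\emph{Plan.} The plan is to take for $\B$ the single explicit algebra
$$\B:=\widetilde{\A}\otimes\K(\ell_2),$$
where $\widetilde{\A}$ is the (minimal) unitization of $\A$, and to verify all the assertions for this $\B$; with this choice stability is essentially automatic. Since $\A$ is stable it is non-unital, so $\widetilde\A$ genuinely adjoins a unit and $\widetilde\A/\A\cong\C$. Tensoring the extension $0\to\A\to\widetilde\A\to\C\to0$ by $\K(\ell_2)$ gives
$$\B/(\A\otimes\K(\ell_2))\cong(\widetilde\A/\A)\otimes\K(\ell_2)=\C\otimes\K(\ell_2)\cong\K(\ell_2),$$
and, fixing an isomorphism $\A\cong\A\otimes\K(\ell_2)$ coming from stability, I identify the ideal $\A\otimes\K(\ell_2)\trianglelefteq\B$ with $\A$. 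Thus $\A$ is an ideal of $\B$ with $\B/\A\cong\K(\ell_2)$. Stability of $\B$ is immediate, since $\B\otimes\K(\ell_2)=\widetilde\A\otimes\K(\ell_2)\otimes\K(\ell_2)\cong\widetilde\A\otimes\K(\ell_2)=\B$.

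Next I would dispose of essentiality and scatteredness. First, $\A$ is an essential ideal of $\widetilde\A$ because $\A$ is non-unital; and being an essential ideal is preserved under $\otimes\K(\ell_2)$ by the argument used for the essentiality part of Lemma \ref{atoms-tensor-product} (every nonzero ideal of a minimal tensor product contains a nonzero elementary tensor $x\otimes k$, which can then be multiplied into $\A\otimes\K(\ell_2)$). Hence $\A=\A\otimes\K(\ell_2)$ is essential in $\B$. For scatteredness it suffices, by Theorem \ref{tensor-scattered} and Proposition \ref{cb-tensor}, to check that $\widetilde\A$ is scattered, and for this I use condition (\ref{ec-homomorphic}) of Theorem \ref{theorem1}: given a nonzero quotient $\widetilde\A/\J$, either $\A\subseteq\J$, in which case $\widetilde\A/\J$ is a nonzero quotient of $\C$ and has a minimal projection, or $\A\not\subseteq\J$, in which case $(\A+\J)/\J\cong\A/(\A\cap\J)$ is a nonzero quotient of the scattered algebra $\A$ and hence contains a minimal projection, which is minimal in $\widetilde\A/\J$ by Lemma \ref{minimal-in-ideal} as it lies in an ideal.

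The main work is the Cantor-Bendixson computation. By Proposition \ref{cb-tensor}, $\I_\alpha(\B)=\I_\alpha(\widetilde\A)\otimes\K(\ell_2)$ and $ht(\B)=ht(\widetilde\A)$, so it is enough to prove that $\I_\alpha(\widetilde\A)=\I_\alpha(\A)$ for every $\alpha\le\beta$ and that $\I_{\beta+1}(\widetilde\A)=\widetilde\A$. I would prove this by transfinite induction, the limit stages being immediate from continuity. For the successor step with $\alpha<\beta$, note that $\I_\alpha(\A)$ is an ideal of $\widetilde\A$, that $\widetilde\A/\I_\alpha(\A)=\widetilde{(\A/\I_\alpha(\A))}$, and apply Lemma \ref{atoms-unitization}:
$$\I^{At}(\widetilde\A/\I_\alpha(\A))=\I^{At}(\widetilde{(\A/\I_\alpha(\A))})=\I^{At}(\A/\I_\alpha(\A))=\I_{\alpha+1}(\A)/\I_\alpha(\A),$$
so that $\I_{\alpha+1}(\widetilde\A)=\I_{\alpha+1}(\A)$. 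At stage $\beta$ the quotient $\widetilde\A/\I_\beta(\A)=\widetilde\A/\A\cong\C$ equals its own atomic ideal, whence $\I_{\beta+1}(\widetilde\A)=\widetilde\A$ and $ht(\widetilde\A)=\beta+1$. Tensoring back gives $\I_\beta(\B)=\I_\beta(\A)\otimes\K(\ell_2)=\A$, $ht(\B)=\beta+1$, and $\B/\I_\beta(\B)=\B/\A\cong\K(\ell_2)$, as required.

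The step I expect to be the main obstacle --- and the one place where the hypothesis that $\A$ is \emph{stable}, not merely scattered, is essential --- is the identity $\widetilde\A/\I_\alpha(\A)=\widetilde{(\A/\I_\alpha(\A))}$ used at each successor stage. Lemma \ref{atoms-unitization} only keeps the atomic ideal from growing when the algebra being unitized is non-unital; if some quotient $\A/\I_\alpha(\A)$ were unital, its unit would survive as a spurious minimal projection and inflate $\I^{At}(\widetilde\A/\I_\alpha(\A))$, destroying the equality of the two Cantor-Bendixson sequences. Stability rescues the argument: quotients of stable $C^*$-algebras are stable (\cite{rordam-stable}) and no nonzero stable algebra is unital, so every $\A/\I_\alpha(\A)$ with $\alpha<\beta$ is non-unital and the induction goes through. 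The remaining points --- preservation of essentiality under $\otimes\K(\ell_2)$ and the bookkeeping at limit ordinals --- are routine.
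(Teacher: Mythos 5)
Your proof is correct and uses exactly the same construction as the paper: $\B=\widetilde{\A}\otimes\K(\ell_2)$ with $\A$ identified with $\A\otimes\K(\ell_2)$ via stability, essentiality transferred from $\A\trianglelefteq\widetilde{\A}$ by tensoring, and Proposition \ref{cb-tensor} for the Cantor--Bendixson computation. If anything, you supply more detail than the paper at the one delicate point, the identity $\I_\alpha(\widetilde{\A})=\I_\alpha(\A)$, which the paper disposes of in a single sentence and which, as you correctly observe, rests on the non-unitality of the quotients $\A/\I_\alpha(\A)$ guaranteed by stability so that Lemma \ref{atoms-unitization} applies at each successor stage.
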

\begin{proof} 
Since $\A$ is stable, we can identify $\A$ with $\A\otimes\K(\ell_2)$. Let 
$$\B=\widetilde{\A}\otimes \K(\ell_2).$$

As $\A$ is an essential ideal of $\widetilde{\A}$, we have that $\A\otimes\K(\ell_2)$
is an essential ideal of $\B$ (see the proof of Proposition
\ref{atoms-tensor-product}). It is also clear that $\B/\A\cong \C\otimes\K(\ell_2)\cong \K(\ell_2)$.
To calculate the Cantor-Bendixson derivatives of $\B$ we use Proposition \ref{cb-tensor}
and conclude that $\I_\alpha(\B)=\I_\alpha(\widetilde{\A})\otimes \K(\ell_2)$
for $\alpha\leq\beta$. Morover the isomorphism between 
$\A$ and  $\A\otimes\K(\ell_2)$ sends 
$\I_\alpha(\A)$ onto $\I_\alpha(\widetilde{\A})\otimes \K(\ell_2)$, by Proposition \ref{cb-tensor} 
and the
fact that an isomorphism must preserve the Cantor-Bendixson ideals.
\end{proof}

\begin{theorem}\label{exists-thin-tall} There is a thin-tall fully noncommutative $C^*$-algebra
(which is a subalgebra of $\B(\ell_2)$). 
\end{theorem}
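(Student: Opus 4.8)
The plan is to build $\A$ as the inductive limit of a continuous, strictly increasing transfinite chain $(\A_\alpha)_{\alpha<\omega_1}$ of separable, stable, fully noncommutative scattered $C^*$-algebras, where $\A_\alpha$ has height $\alpha$ and width $\omega$, realized so that $\A_\alpha$ is an essential ideal of $\A_\beta$ whenever $\alpha<\beta$, with $\I_\gamma(\A_\beta)=\A_\gamma$ for $\gamma\leq\alpha$ and every successor quotient isomorphic to $\K(\ell_2)$. Setting $\A=\overline{\bigcup_{\alpha<\omega_1}\A_\alpha}$ inside a suitable $\B(\ell_2)$, continuity of the Cantor-Bendixson sequence at the limit $\omega_1$ gives $\I_\alpha(\A)=\A_\alpha$ for all $\alpha\leq\omega_1$, whence $ht(\A)=\omega_1=\omega^+$, all successor quotients are $\K(\ell_2)$, so $wd(\A)=\omega$ and $\A$ is $\omega$-thin-tall and fully noncommutative in the sense of Definitions \ref{width-height} and \ref{fully}.

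The recursion starts with $\A_1=\K(\ell_2)$. At a successor $\alpha=\gamma+1$ I apply Lemma \ref{extension-compact} to the stable algebra $\A_\gamma$ of height $\gamma$: it produces a stable scattered $\A_{\gamma+1}=\widetilde{\A_\gamma}\otimes\K(\ell_2)$ of height $\gamma+1$ containing $\A_\gamma$ as an essential ideal with $\I_\gamma(\A_{\gamma+1})=\A_\gamma$ and $\A_{\gamma+1}/\A_\gamma\cong\K(\ell_2)$. At a countable limit $\lambda$ I let $\A_\lambda=\varinjlim_{\alpha<\lambda}\A_\alpha$ along the essential-ideal embeddings already built. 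This $\A_\lambda$ is separable and scattered, each $\A_\alpha$ ($\alpha<\lambda$) sits inside it as a closed ideal (by a standard approximate-unit argument), and essentiality passes to the limit by Lemma \ref{ideal-in-ideal} together with the density of $\bigcup_\alpha\A_\alpha$. Moreover $ht(\A_\lambda)=\lambda$: the quotient $\A_{\alpha+1}/\A_\alpha\cong\K(\ell_2)$ is $\I^{At}(\A_\beta/\A_\alpha)$, hence essential, in each $\A_\beta/\A_\alpha$ (Proposition \ref{scattered-are-atomic}), so it remains essential in $\A_\lambda/\A_\alpha=\varinjlim_{\beta>\alpha}\A_\beta/\A_\alpha$ by the same limit argument, and then Proposition \ref{essential-compact} forces $\I^{At}(\A_\lambda/\A_\alpha)=\A_{\alpha+1}/\A_\alpha$, i.e. $\I_\alpha(\A_\lambda)=\A_\alpha$; continuity gives $\I_\lambda(\A_\lambda)=\A_\lambda$.

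The crucial point of the limit step, and the main obstacle, is that Lemma \ref{extension-compact} can be reapplied at stage $\lambda+1$ only if $\A_\lambda$ is itself stable. This is exactly where I invoke the theorem of Hjelmborg and R{\o}rdam that a countable inductive limit of separable stable $C^*$-algebras is stable (Corollary 2.3 of \cite{rordam-stable}): since $\lambda<\omega_1$ each $\A_\alpha$ is separable and stable, so $\A_\lambda$ is a countable inductive limit of separable stable algebras and hence stable, and the recursion continues through every countable ordinal. This is precisely the feature that fails for the final uncountable limit $\A=\A_{\omega_1}$, which need not be (and, as recorded in Theorem \ref{long-thin-tall}, is not) stable; for the present statement, however, stability of $\A$ itself is not needed.

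Finally, to realize $\A$ as a subalgebra of $\B(\ell_2)$ with $\ell_2$ separable, I build a coherent tower of faithful nondegenerate representations $\rho_\alpha\colon\A_\alpha\to\B(\ell_2)$ on one fixed separable Hilbert space, with $\rho_\beta|_{\A_\alpha}=\rho_\alpha$ for $\alpha<\beta$. Starting from the identity representation of $\A_1=\K(\ell_2)$, at successors I use the extension mechanism behind Proposition \ref{embedding-scattered}: since $\A_\alpha$ is an essential ideal of $\A_{\alpha+1}$ and $\rho_\alpha$ is faithful and nondegenerate, the multiplier algebra $\mathcal M(\A_\alpha)$ is identified with the idealizer of $\rho_\alpha[\A_\alpha]$ in $\B(\ell_2)$, and the universal property of $\mathcal M(\A_\alpha)$ yields a faithful extension $\rho_{\alpha+1}$ on the same $\ell_2$; at limits I take the isometric closure $\rho_\lambda=\overline{\bigcup_{\alpha<\lambda}\rho_\alpha}$. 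Then $\rho=\overline{\bigcup_{\alpha<\omega_1}\rho_\alpha}$ is a faithful representation of $\A$ on $\ell_2$, identifying $\A$ with a subalgebra of $\B(\ell_2)$. The routine verifications to be filled in are the ideal and essentiality bookkeeping at limits and the elementary Cantor-Bendixson computations; the genuinely delicate ingredient is the preservation of stability at countable limits described above.
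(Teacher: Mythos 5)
Your proposal is correct and follows essentially the same route as the paper: the same transfinite chain with $\widetilde{\A_\gamma}\otimes\K(\ell_2)$ at successors (Lemma \ref{extension-compact}), inductive limits at countable limit ordinals with the Hjelmborg--R{\o}rdam theorem used to preserve stability there (which is indeed the crux), and the same essentiality bookkeeping via Lemma \ref{ideal-in-ideal} and Proposition \ref{essential-compact}. The only cosmetic difference is at the very end: instead of building a coherent tower of representations, the paper embeds the final algebra $\A$ into $\B(\ell_2)$ in one stroke via Proposition \ref{embedding-scattered}, since $\I^{At}(\A)\cong\K(\ell_2)$ is an essential ideal of $\A$ by Proposition \ref{scattered-are-atomic}.
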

\begin{proof} We construct an inductive limit $\A$ of stable scattered
separable fully noncommutative algebras $(\A_\alpha)_{\alpha<\omega_1}$
such that
\begin{itemize}
\item $\A_0 = \K(\ell_2 )$,
\item $\A_\alpha$ is an essential ideal of $\A_{\alpha+1}$ for every $\alpha<\omega_1$,
\item $\A_{\alpha+1}/\A_\alpha$ is $*$-isomorphic to $\K(\ell_2)$ for every $\alpha<\omega_1$,
\item $\A_\lambda$ is an inductive limit of $\A_\alpha$s for $\alpha<\lambda$ if
$\lambda$ is a countable limit ordinal.
\end{itemize} 
This is enough, since then by Lemma \ref{ideal-in-ideal} we have that
$\A_{\alpha+1}/\A_\alpha$ is an essential ideal in $\A/\A_\alpha$, and so by Proposition
\ref{essential-compact} we conclude that $\I^{At}(\A/\A_\alpha)=\A_{\alpha+1}/\A_\alpha$.

Given $\A_\alpha$ apply Lemma \ref{extension-compact} to get $\A_{\alpha+1}$. 
The resulting algebra satisfies the induction requirements, it is stable,
fully noncommutative, separable and scattered of height $\alpha+1$.

At countable limit ordinals take the inductive limit of the previously constructed chain
of ideals. It is clear that we obtain a fully noncommutative scattered algebra of appropriate height.
To prove that it is stable we use Corollary 2.3. (i) of \cite{rordam-stable} implying that
the countable inductive limit of separable stable algebras is stable.

The final algebra $\A$ is the inductive limit of the entire uncountable sequence. Note
that it can be embedded into $\B(\ell_2)$  applying Proposition \ref{embedding-scattered} as $\K(\ell_2)=\I^{At}_1(\A)$
is an essential ideal of $\A$ by Proposition \ref{scattered-are-atomic}
\end{proof}

Whether the algebra constructed in the theorem above is stable depends on 
the additional features of the construction. Clearly $\A\otimes\K(\ell_2)$ is stable, for $\A$ from the above theorem,
 so
the construction can be carried out in such a way that this isomorphism is respected.
However one can construct a fully noncommutative thin-tall scattered $C^*$-algebra which is not stable
(see \cite{thin-tall}). Note that the Cantor-Bendixson ideals of such
$C^*$-algebras are always stable by Lemma \ref{fnoncom-stable}. 
This provides examples of  $C^*$-algebras which are nonstable and with no maximal stable ideal.
 This follows from the fact that in fully commutative scattered
$C^*$-algebras all ideals are among the Cantor-Bendixson ideals by Lemma \ref{all-ideals}.

\begin{theorem}\label{long-thin-tall} There is 
a fully noncommutative scattered algebra $\A\subseteq \B(\ell_2)$ 
which is an inductive limit  of a sequence $(\I_\alpha)_{\alpha<\lambda}$
of its essential ideals  for a limit ordinal $\lambda\leq \cc^+$  of uncountable cofinality
such that $\I_{\alpha+1}/\I_\alpha$
is isomorphic to $\K(\ell_2)$ for each $\alpha<\lambda$, with each $\I_\alpha$ stable
but $\A$ nonstable.  In particular stability of $C^*$-algebras is not preserved by 
uncountable inductive limits and
there are nonstable $C^*$-algebras without a stable ideal which is maximal among stable ideals.
\end{theorem}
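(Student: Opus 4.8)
The plan is to build the chain $(\I_\alpha)_{\alpha<\lambda}$ by transfinite recursion, exactly as in the proof of Theorem \ref{exists-thin-tall} but continued past $\omega_1$ and with extra bookkeeping aimed at destroying stability in the limit. I would start with $\I_0=\K(\ell_2)$. At a successor stage I apply Lemma \ref{extension-compact} to the (stable, scattered, fully noncommutative) algebra $\I_\alpha$ of height $\alpha$ to obtain $\I_{\alpha+1}$ of height $\alpha+1$, stable, containing $\I_\alpha$ as an essential ideal with $\I_{\alpha+1}/\I_\alpha\cong\K(\ell_2)$. At a limit stage $\gamma<\lambda$ I set $\I_\gamma=\overline{\bigcup_{\beta<\gamma}\I_\beta}$, and finally $\A=\overline{\bigcup_{\alpha<\lambda}\I_\alpha}$. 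The only freedom in the recursion lies in how $\I_\alpha$ is embedded into $\I_{\alpha+1}$, equivalently in which stabilising isomorphism $\I_\alpha\cong\I_\alpha\otimes\K(\ell_2)$ is fed into Lemma \ref{extension-compact}; this freedom is what I intend to spend in order to arrange non-stability.

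The structural properties can then be read off from Sections 3--6. Each $\I_\alpha$ is scattered and fully noncommutative with $\I_\beta(\I_\alpha)=\I_\beta$ for $\beta\le\alpha$, and by Lemma \ref{ideal-in-ideal} each $\I_\alpha$ is an essential ideal of $\A$; since $\I_1=\K(\ell_2)=\I^{At}(\A)$ is an essential ideal (Proposition \ref{scattered-are-atomic}), Proposition \ref{embedding-scattered} embeds $\A$ into $\B(\ell_2)$. That the $\I_\alpha$ are exactly the ideals of $\A$, and hence constitute the whole Cantor--Bendixson sequence, is Lemma \ref{all-ideals}. The stability of the proper ideals comes essentially for free below the first limit of uncountable cofinality: successors are stable by Lemma \ref{extension-compact} and countable-cofinality limits by Corollary 2.3 of \cite{rordam-stable}. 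The delicate point is that, for $\lambda>\omega_1$, the recursion passes through limits $\gamma<\lambda$ of uncountable cofinality where $\I_\gamma$ is non-separable and its stability is no longer automatic; I would have to check that at every such $\gamma$ the global obstruction to stability has not yet been realised, so $\I_\gamma$ is still stable, while the same obstruction becomes fatal only at the very top $\lambda$.

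The heart of the matter, and the step I expect to be the main obstacle, is to force $\A$ itself to be non-stable while keeping all proper ideals stable. No coarse invariant distinguishes $\A$ from $\A\otimes\K(\ell_2)$: by Proposition \ref{cb-tensor} they have identical Cantor--Bendixson sequences, and since every $\I_\alpha$ is stable a restriction argument shows $\A$ has no nonzero bounded trace, so stability cannot be refuted by traces either. Moreover, since stability is, for separable $C^*$-algebras, $\forall\exists$-axiomatisable and preserved under countable inductive limits (Corollary 4.1 of \cite{rordam-h}; Propositions 2.4.4 and 2.7.7 of \cite{Farah-model-theory}), any obstruction must be genuinely non-separable and must exploit that $\A$, being a continuous increasing union of uncountable cofinality, is not $\sigma$-unital. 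The plan is therefore to diagonalise: I enumerate in order type $\lambda$ all potential stabilising data for $\A$ (Hjelmborg--R{\o}rdam style local witnesses, of which there are at most $\cc$ once one restricts to separably generated data, the density character of $\A$ being at most $\cc$), and at the successor stages I choose the embedding $\I_\alpha\hookrightarrow\I_{\alpha+1}$ so as to defeat the $\alpha$-th candidate in a way that only takes effect in the limit. Uncountable cofinality guarantees that no candidate is captured by a proper initial segment, so the proper ideals stay stable, whereas at $\lambda$ every candidate has been killed and $\A\not\cong\A\otimes\K(\ell_2)$; the bound $\lambda\le\cc^+$ is exactly what is needed to run the bookkeeping over all $\cc$ candidates. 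Identifying the correct notion of candidate and verifying that defeating it is compatible with the single-step Lemma \ref{extension-compact} is the crux, and a careful implementation is carried out in \cite{thin-tall}.

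The closing assertions are then immediate. Stability is not preserved by this inductive limit, since every $\I_\alpha$ is stable but $\A$ is not. And by Lemma \ref{all-ideals} the ideals of $\A$ are precisely the $\I_\alpha$ for $\alpha\le\lambda$; the stable ones are exactly $\{\I_\alpha:\alpha<\lambda\}$, a chain with no largest element because $\lambda$ is a limit ordinal, so $\A$ has no stable ideal that is maximal among its stable ideals.
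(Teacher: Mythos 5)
There is a genuine gap at the heart of your proposal: you never actually establish that $\A$ is non-stable. Your plan is to diagonalise against an enumeration of ``stabilising data,'' but you leave the crux --- identifying the right notion of candidate and showing that defeating one is compatible with the successor step of Lemma \ref{extension-compact} --- unspecified, deferring it to \cite{thin-tall} (which proves a different, harder statement about height $\omega_1$). The paper needs no diagonalisation at all. It runs the recursion of Theorem \ref{exists-thin-tall} for as long as the algebras produced remain stable (stability being exactly what licenses the next application of Lemma \ref{extension-compact}), and \emph{defines} $\lambda$ to be the first ordinal at which $\A_\lambda$ fails to be stable. That such a $\lambda$ exists and is at most $\cc^+$ is a pure cardinality argument: if the construction never broke down one could form $\A_{\cc^+}$, whose ideal $\I^{At}(\A_{\cc^+})\cong\K(\ell_2)$ is essential, so by Proposition \ref{embedding-scattered} the whole algebra embeds into $\B(\ell_2)$ and has density character at most $\cc$, forcing its height to be below $\cc^+$ --- a contradiction. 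This is why $\cc^+$ appears in the statement; it has nothing to do with counting $\cc$ many candidates to defeat. That $\lambda$ is a limit of uncountable cofinality then follows because a successor stage is stable by Lemma \ref{extension-compact} and a countable-cofinality limit is stable by Corollary 4.1 of \cite{rordam-h} (the algebras $\widetilde{\A}\otimes\K(\ell_2)$ being $\sigma$-unital).

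The minimality of $\lambda$ also dissolves the difficulty you flag about intermediate limits of uncountable cofinality: in your set-up, with $\lambda$ fixed in advance, you would genuinely have to verify stability of $\I_\gamma$ at every such $\gamma<\lambda$, and you give no argument for this; in the paper's set-up every $\A_\alpha$ with $\alpha<\lambda$ is stable by the very definition of $\lambda$ as the first failure point. Your concluding paragraph (all ideals are among the $\I_\alpha$ by Lemma \ref{all-ideals}, the stable ones form a chain indexed by a limit ordinal, hence no maximal stable ideal) is correct, but it rests entirely on the non-stability of $\A$ and the stability of all proper $\I_\alpha$, which is precisely the part your proposal does not deliver.
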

\begin{proof} 
Perform the recursive construction as in the proof 
of Theorem \ref{exists-thin-tall} 
 obtaining $\A_\alpha$s which are
fully noncommutative, scattered of height $\alpha$ and 
form an increasing continuous  chain of essential ideals. The recursive construction has
length $\lambda$ where 
 $\lambda$ be the first ordinal
at which $\A_\lambda$
is not stable. Note that for all $\alpha<\lambda$ we can continue the 
construction from the proof 
of Theorem \ref{exists-thin-tall}  beyond $\alpha$ as the algebras $A_\alpha$ are stable by the minimality of $\lambda$
and so the  following successor step is possible and at limit ordinals (possibly uncountable) we take the inductive limits.

First we prove that there is such $\lambda<\cc^+$. Suppose not,  and let us
derive a contradiction. Then
we can produce $\A_{\cc^+}$ and consider
 $\I^{At}(\A_{\cc^+})$, which is $^*$-isomorphic to $\K(\ell_2)$ and it is an essential ideal  
of $\A_{\cc^+}$.  It follows from Proposition \ref{embedding-scattered} that $\A_{\cc^+}$ embeds into the multiplier algebra of $\K(\ell_2)$,
which is $\B(\ell_2)$. It follows that the density of
$\A_{\cc^+}$ is at most continuum, and so the height $ht(\A_{\cc^+})<(2^\omega)^+$
as successor cardinals are regular,
a contradiction. 

Now let us see that $\lambda$ is a limit ordinal
of uncountable cofinality. If $\lambda$ is a successor,
then the resulting algebra is 
stable  by Lemma \ref{extension-compact}. At limit
ordinals of countable cofinality we can use Corollary 4.1 of 
from \cite{rordam-h}, which states that an inductive limit of a sequence 
of $\sigma$-unital stable $C^*$-algebras is stable. 
Note that algebras of the form $\B=\widetilde{\A}\otimes \K(\ell_2)$
are $\sigma$-unital, because $(1\otimes p)$ form a countable approximate
unit of them, where $p$ runs through  finite dimensional projections
onto spaces spanned by first $n\in \N$ vectors of some fixed orthonormal basis of $\ell_2$.
The countable cofinality of $\lambda$ ensures that 
our inductive limit $\A_\lambda$ is also an inductive limit of a sequence
of algebras $(\A_{\lambda_n})_{n\in \N}$, where $(\lambda_n)_{n\in \N}$
is cofinal in $\lambda$. This implies that $\A_\lambda$ is stable, which is  a contradiction.
 Therefore $\lambda$ is as required.

The absence of a maximal stable ideal in $\A_\lambda$ follows from
the minimality of $\lambda$ and 
the fact that all ideals in a  fully noncommutative scattered $C^*$-algebras
are among $\A_\alpha$s for $\alpha<\lambda$
(Lemma \ref{all-ideals}).
\end{proof}

Our final example is a noncommutative version of the $\Psi$-space (cf. \cite{hrusak}):

\begin{theorem}\label{exists-psi}
There is a stable and fully noncommutative scattered $C^*$-algebra $\A$ of height 2
with $\I^{At}(\A)\cong \K(\ell_2)$ and $\A/\I^{At}(\A)\cong \K(\ell_2(\cc))$.
\end{theorem}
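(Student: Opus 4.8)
The plan is to realize $\A$ as an extension of $\K(\ell_2(\cc))$ by $\K(\ell_2)$ living inside the bounded operators on a separable Hilbert space, and then to pass to a stable version by tensoring with $\K(\ell_2)$. Concretely, fix a separable infinite-dimensional Hilbert space $\HH$ with orthonormal basis $(e_n)_{n\in\N}$, let $\pi:\B(\HH)\to\B(\HH)/\K(\HH)$ be the quotient onto the Calkin algebra, and aim first at a (not necessarily stable) algebra $\A_0$ with $\I^{At}(\A_0)=\K(\HH)$ and $\A_0/\K(\HH)\cong\K(\ell_2(\cc))$. Since $\K(\HH)$ is an essential ideal of $\B(\HH)$, it is essential in any $\A_0$ with $\K(\HH)\subseteq\A_0\subseteq\B(\HH)$, so by Proposition \ref{essential-compact} we will automatically get $\I^{At}(\A_0)=\K(\HH)$; the only real task is to exhibit a copy of $\K(\ell_2(\cc))$ inside the Calkin algebra and let $\A_0$ be its preimage under $\pi$.

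The heart of the construction uses an almost disjoint family to produce an uncountable system of matrix units in the Calkin algebra. Fix an almost disjoint family $\{A_\alpha:\alpha<\cc\}$ of infinite subsets of $\N$ (such a family of size $\cc$ exists in ZFC), and for each $\alpha$ fix a bijective enumeration $\phi_\alpha:\N\to A_\alpha$. Define partial isometries $V_{\alpha\beta}\in\B(\HH)$ by $V_{\alpha\beta}e_{\phi_\beta(k)}=e_{\phi_\alpha(k)}$ and $V_{\alpha\beta}e_n=0$ for $n\notin A_\beta$; thus $V_{\alpha\alpha}=P_\alpha$ is the projection onto $\overline{\mathrm{span}}\{e_n:n\in A_\alpha\}$ and $V_{\alpha\beta}^*=V_{\beta\alpha}$. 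A direct computation gives $V_{\alpha\beta}V_{\beta\delta}=V_{\alpha\delta}$ exactly, while for $\beta\neq\gamma$ the product $V_{\alpha\beta}V_{\gamma\delta}$ is nonzero only on the basis vectors indexed by $A_\beta\cap A_\gamma$, which is finite, so $V_{\alpha\beta}V_{\gamma\delta}$ is finite-rank. Consequently the images $v_{\alpha\beta}=\pi(V_{\alpha\beta})$ satisfy the matrix-unit relations $v_{\alpha\beta}v_{\gamma\delta}=\delta_{\beta\gamma}v_{\alpha\delta}$ and $v_{\alpha\beta}^*=v_{\beta\alpha}$ exactly in the Calkin algebra, and each diagonal $v_{\alpha\alpha}=\pi(P_\alpha)$ is nonzero since $A_\alpha$ is infinite, so $P_\alpha$ is noncompact. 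This is the step I expect to be the main obstacle: it is precisely almost disjointness (finite intersections) that converts the failure of the matrix-unit relations in $\B(\HH)$ into an exact system of relations after quotienting by $\K(\HH)$.

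Given such a system, the closed $*$-subalgebra $\mathcal{Q}\subseteq\B(\HH)/\K(\HH)$ generated by $\{v_{\alpha\beta}\}$ is $*$-isomorphic to $\K(\ell_2(\cc))$: for each finite $F\subseteq\cc$ the span of $\{v_{\alpha\beta}:\alpha,\beta\in F\}$ is a copy of $M_{|F|}(\C)$ (a nonzero system of matrix units generates a full matrix algebra, whose $C^*$-norm is unique), and the completion of the algebraic inductive limit over finite $F$ is $\K(\ell_2(\cc))$. Put $\A_0=\pi^{-1}(\mathcal{Q})$, a $C^*$-subalgebra of $\B(\HH)$ fitting in a short exact sequence $0\to\K(\HH)\to\A_0\to\K(\ell_2(\cc))\to0$. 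As noted, $\I^{At}(\A_0)=\K(\HH)$ by Proposition \ref{essential-compact}, and $\A_0/\I^{At}(\A_0)\cong\K(\ell_2(\cc))$, whose own ideal generated by minimal projections is all of it. Hence setting $\I_0=\{0\}$, $\I_1=\K(\HH)$, $\I_2=\A_0$ gives $\I^{At}(\A_0/\I_\alpha)=\{[a]_{\I_\alpha}:a\in\I_{\alpha+1}\}$ for $\alpha=0,1$, which by Theorem \ref{theorem1}(\ref{ec-sequence-new}) shows that $\A_0$ is scattered with $ht(\A_0)=2$; since both consecutive quotients are full compact operator algebras, $\A_0$ is fully noncommutative.

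Finally, set $\A=\A_0\otimes\K(\ell_2)$. Then $\A\cong\A\otimes\K(\ell_2)$ because $\K(\ell_2)\otimes\K(\ell_2)\cong\K(\ell_2)$, so $\A$ is stable. By Proposition \ref{cb-tensor} the algebra $\A$ is scattered with $ht(\A)=ht(\A_0)=2$ and Cantor-Bendixson sequence $\J_\alpha=\I_\alpha\otimes\K(\ell_2)$; in particular $\I^{At}(\A)=\K(\HH)\otimes\K(\ell_2)\cong\K(\ell_2)$ and $\A/\I^{At}(\A)\cong(\A_0/\K(\HH))\otimes\K(\ell_2)\cong\K(\ell_2(\cc))\otimes\K(\ell_2)\cong\K(\ell_2(\cc))$, using $\ell_2(\cc)\otimes\ell_2\cong\ell_2(\cc)$. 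Both consecutive quotients being full compact algebras, $\A$ is fully noncommutative, and it has the required height, atomic part, and quotient.
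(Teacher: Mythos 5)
Your proof is correct and follows essentially the same route as the paper: an almost disjoint family of size $\cc$ yields partial isometries whose images in the Calkin algebra form an exact system of matrix units generating a copy of $\K(\ell_2(\cc))$, the preimage of that copy contains $\K(\ell_2)$ as an essential ideal, and Proposition \ref{essential-compact} identifies $\I^{At}$. The only difference is that you explicitly secure stability by tensoring with $\K(\ell_2)$ and invoking Proposition \ref{cb-tensor}, whereas the paper's proof leaves that point to an earlier general remark; this is added care rather than a genuine divergence.
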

\begin{proof}
Let $(A_\xi: \xi<\cc)$ be an almost disjoint family of subsets of $\N$, 
that is such a family of infinite subsets of $\N$ that $A_\xi\cap A_\eta$ is finite for 
any two distinct $\xi, \eta<\cc$. Define in $\B(\ell_2)$ orthogonal projections
$P_\xi$ onto the spaces $span(\{e_n: n\in A_\xi\})$ where $(e_n)_{n\in \N}$ 
is a  fixed orthogonal basis of $\ell_2$. 
Let $\sigma_\xi: A_0\rightarrow A_\xi$ be bijections and
$T_{\xi, 0}\in \B(\ell_2)$ be corresponding partial isometries i.e.,
$T_{\xi, 0}(e_n)=e_{\sigma_\xi(n)}$ if $n\in A_0$ and $T_{\xi, 0}(e_n)=0$
otherwise.  Note that $T_{\xi, 0}^*(e_n)=e_{\sigma^{-1}_\xi(n)}$ if $n\in A_\xi$ 
and $T_{\xi, 0}(e_n)=0$
otherwise. 
For all $\xi, \eta<\kappa$
 define 
$$T_{\eta,  \xi}=T_{\eta,0}T_{\xi, 0}^*.$$ 
That is  $T_{\eta, \xi}(e_n)=e_{\sigma_\eta\circ\sigma^{-1}_\xi(n)}$ if $n\in A_\xi$ 
and $T_{\eta, \xi}(e_n)=0$
otherwise. 
Note that $T_{\eta, \xi}=P_\eta T_{\eta, \xi}=T_{\eta, \xi}P_\xi=P_\eta T_{\eta, \xi}P_\xi$.

Let $\mathcal A$ be a $C^*$-algebra generated in $\B(\ell_2)$ by
$\{T_{\eta, \xi}: \xi, \eta<\cc\}$ and the compact operators. 
As $\K(\ell_2)$ is an essential ideal in $\B(\ell_2)$, it is essential in $\A$ and so
by Proposition \ref{essential-compact} we have $\I^{At}(\A)=\K(\ell_2)$. Now consider the quotient
$\A/\K(\ell_2)$. We have 
$$[T_{\eta, \xi}]^*_{\K(\ell_2)}=[T_{\xi, \eta}]_{\K(\ell_2)}\leqno (1)$$
as $T_{\eta, \xi}^*=T_{\xi, \eta}$. Moreover
$$[T_{\beta, \alpha}]_{\K(\ell_2)}[T_{\xi, \eta}]_{\K(\ell_2)}
=\delta_{\alpha, \xi}[T_{\beta, \eta}]_{\K(\ell_2)}\leqno (2)$$
where $\delta_{\alpha,\xi}=1$ if $\xi=\alpha$ and 
$\delta_{\alpha,\xi}=0$  otherwise. This is checked directly if $\alpha=\xi$ and
for $\alpha\not=\xi$ we use 
$T_{\beta, \alpha}T_{\xi, \eta}=T_{\beta, \alpha}P_\alpha P_\xi T_{\xi, \eta}$
and the fact that $P_\alpha P_\xi$ is the projection on  a finite
dimensional space $span(\{e_n: n\in A_\alpha\cap A_\xi\})$, since $A_\xi$s are
almost disjoint. It follows that $T_{\beta, \alpha}T_{\xi, \eta}$ is compact if $\alpha\not=\xi$.
It is well-known that $C^*$ algebras having nonzero generators
satisfying (1) and (2) are isomorphic to the algebra of compact operators
on the Hilbert space $\ell_2(\cc)$. This  is for example checked in Section 2.3.
of \cite{psi-space} and this completes the proof.

\end{proof}

As in the case of a thin-tall $C^*$-algebra the above $\Psi$-algebra may or may not
be stable. An example exhibiting very strong nonstability (its multiplier algebra is isomorphic to the minimal
unitization) is
constructed in \cite{psi-space}.

\bibliographystyle{amsplain}

\end{document}